\newtheorem{lemma}{Lemma}
\newtheorem{proposition}{Proposition}
\newtheorem{corollary}{Corollary}
 \theoremstyle{remark}
 \newtheorem{example}{Example}
 \newtheorem{remark}{Remark}
\DeclareMathOperator{\Ker}{Ker}
\DeclareMathOperator{\im}{Im}
 \DeclareMathOperator{\Hom}{Hom}
  \DeclareMathOperator{\Homb}{\bf{Hom}}
  \DeclareMathOperator{\Inf}{Inf}
   \DeclareMathOperator{\Res}{Res}
     \DeclareMathOperator{\Norme}{Norme}
   \DeclareMathOperator{\Trace}{Trace}
   \DeclareMathOperator{\Aut}{Aut}
 \DeclareMathOperator{\Out}{Out}
  \DeclareMathOperator{\Span}{Span}
    \DeclareMathOperator{\tr}{tr}
        \DeclareMathOperator{\Lie}{Lie}
            \DeclareMathOperator{\codim}{codim}
    \DeclareMathOperator{\Fix}{Fix}
\begin{document}

\title{Bad irreducible subgroups and singular locus for character varieties in $PSL(p,\mathbb{C})$}

\author{Cl\'ement Gu\'erin\thanks{University of Luxembourg, Campus Kirchberg
Mathematics Research Unit, 6, rue Richard Coudenhove-Kalergi, L-1359, Luxembourg. \textit{e-mail : clement.guerin@uni.lu}}}

\date{}

\maketitle

\begin{abstract}
We give the centralizers of irreducible representations from  a finitely generated group $\Gamma$ to $PSL(p,\mathbb{C})$ where $p$ is a prime number. This leads to a description of the singular locus (the set of conjugacy classes of representations whose centralizer strictly contains the center of the ambient group) of the irreducible part of the character variety $\chi^i(\Gamma,PSL(p,\mathbb{C}))$. When $\Gamma$ is a free group of rank $l\geq 2$ or the fundamental group of a closed Riemann surface of genus $g\geq 2$, we give a complete description of this locus and prove that this locus is exactly the set of algebraic singularities of the  irreducible part of the character variety.
\end{abstract}

\textbf{Keywords : }Representation variety $\cdot$ Character variety $\cdot$ Irreducible representations $\cdot$ Centralizer of irreducible representations $\cdot$ Orbifolds $\cdot$ Fuchsian groups representations

\textbf{Mathematics Subject Classification (2000)} 20C15 $\cdot$ 15A21

\tableofcontents

\section{Introduction}\label{intro}

Let $G$ be a complex reductive group, e.g. $GL(n,\mathbb{C})$, $SL(n,\mathbb{C})$ or $PSL(n,\mathbb{C})$.  A proper subgroup $P$ of $G$ is \textit{parabolic} if $G/P$ is a complete variety. A subgroup $H$ of $G$ is \textit{irreducible} if $H$ is not contained in a parabolic subgroup of $G$.  A subgroup $H$ of $G$ is \textit{completely reducible}, if for any parabolic subgroup $P$ of $G$ containing $H$, there is a Levi subgroup $L$ of $P$ such that $H$ is contained in $L$. 

Throughout the paper, if $G$ is a group, $Z(G)$ denotes its center and if $H$ is a subgroup of $G$, $Z_G(H)$ denotes the centralizer of $H$ in $G$. If $H$ is an irreducible subgroup of $G$, we say that $H$ is \textit{good} if $Z_G(H)=Z(G)$ and \textit{bad} else. Sikora proved (see \cite{Sik}, Corollary 17) that a completely reducible subgroup $H$ of $G$ is irreducible if and only if its centralizer $Z_G(H)$ in $G$ is a finite extension of $Z(G)$.

\bigskip

Let $\Gamma$ be a finitely generated group,  a representation from $\Gamma$ to $G$ is \textit{irreducible} (resp. \textit{completely reducible}, \textit{good}, \textit{bad}) if $\rho(\Gamma)$ is. While Schur's Lemma implies that all irreducible representations in $SL(n,\mathbb{C})$ or $GL(n,\mathbb{C})$ are good, there are many examples in the literature of bad representations in other complex reductive algebraic groups (see Proposition 3.32 in \cite{F-L}). Roughly speaking, our goal in this paper is to study bad representations from a finitely generated group to $PSL(p,\mathbb{C})$ when $p$ is a prime number.  We make a brief recall of some notions on representation/character varieties (see  \cite{Sik} and also \cite{L-M} for a complete exposition). 

\bigskip

The set of representations from $\Gamma$ to $G$ is denoted $\Hom(\Gamma,G)$. It is an affine algebraic set. Indeed, if $\Gamma$ is generated by $r$ elements, $\Hom(\Gamma,G)$ can be cut out from $G^r$ by polynomial equations. As a result, there is a \textit{universal representation algebra} $R(\Gamma,G)$ (see Paragraph 5 in \cite{Sik}) such that $\Hom(\Gamma,G)$ corresponds to the set of maximal ideals of $R(\Gamma,G)$. Remark that $R(\Gamma,G)$ may not be reduced (i.e. it could non-trivial contain nilpotent elements). As a result, it is useful to define the \textit{schematic representation variety} $\Homb(\Gamma,G)$ as the spectrum of the ring $R(\Gamma,G)$. By definition, the set of $\mathbb{C}$-points of $\Homb(\Gamma,G)$ is $\Hom(\Gamma,G)$. From Section \ref{badsub} to Section \ref{surfgrpcase} included, we will only be interested in the $\mathbb{C}$-points of the representation variety. However, the tangent spaces computed in Section \ref{orbalgsing}, are, a priori, tangent  to the schematic representation variety $\Homb(\Gamma,G)$ and it is necessary to introduce this scheme for this reason. There are two natural topologies on $\Hom(\Gamma,G)$. One is the Zariski topology, the other is the transcendental topology (induced by the topology of $G$ as a complex  group). In this paper, we will mostly consider the second topology.

We denote $\Hom^i(\Gamma,G)$ the set of irreducible representations from $\Gamma$ to $G$. Since being non-irreducible is a closed condition, the set $\Hom^i(\Gamma,G)$ is   open in $\Hom(\Gamma,G)$ (see Proposition 27 in \cite{Sik}).

The adjoint group $G/Z(G)$ acts on $\Hom(\Gamma,G)$ by conjugation.  If $\rho:\Gamma\rightarrow G$ is a representation, its conjugacy class will be denoted $[\rho]$. Roughly speaking, the character variety is the quotient of $\Hom(\Gamma,G)$ by this conjugation action. However, in order to obtain an interesting geometric structure on the quotient we need to consider the GIT quotient $\Hom(\Gamma,G)//(G/Z(G))$. It is only defined  when $G$ is reductive. This quotient will be called the \textit{character variety} of $\Gamma$ into $G$ and will be denoted $\chi(\Gamma,G)$.  By definition, there is an algebraic map $\Psi: \Hom(\Gamma,G)\rightarrow \chi(\Gamma,G)$ which induces by duality an isomorphism between $\mathbb{C}[\chi(\Gamma,G)]$ and $\mathbb{C}[\Hom(\Gamma,G)]^G=(R(\Gamma,G)/\sqrt{(0)})^G$. Likewise, one may define the \textit{schematic character variety} $\mathfrak{X}(\Gamma,G)$ as the spectrum of  $\mathbb{C}[\Homb(\Gamma,G)]^G=R(\Gamma,G)^G$.

It turns out that $\chi(\Gamma,G)$ can be identified to the set of closed orbits in $\Hom(\Gamma,G)$ for the conjugation action. The projection $\Psi$ sends a representation $\rho$ to the unique closed orbit contained in the topological closure of $[\rho]$. Since $[\rho]$ is closed if and only if $\rho$ is completely reducible, $\Psi$ restricted to the subset of completely reducible representations is the usual projection on a quotient space. 

In particular, if we denote $\chi^i(\Gamma,G)$ the subset of conjugacy classes of irreducible representations in $\chi(\Gamma,G)$, then  $\chi^i(\Gamma,G)$ can be identified to the usual quotient $\Hom^i(\Gamma,G)/(G/Z(G))$. Remark that the action of $G/Z(G)$ on $\Hom^i(\Gamma,G)$ is proper by Proposition 1.1 in \cite{J-M}. In our paper, the \textit{singular locus of the character variety} of $\Gamma$ into $G$ is the set $\chi^i_{Sing}(\Gamma,G)$ of conjugacy classes of bad representations. 

A closed point $x$ in a (possibly non-reduced) algebraic scheme $X$ is \text{simple} if it belongs to a unique irreducible component and the dimension of its Zariski-tangent space  coincides with the dimension of the unique irreducible component containing it. A point $x$ in $X$ is an \textit{algebraic singularity} in $X$ if it is not a simple point. In particular, $\chi^i(\Gamma,G)$ or $\mathfrak{X}^i(\Gamma,G)$ may contain algebraic singularities.

Remark that being a simple point of the schematic  representation (resp. character) variety is  stronger than being a simple point of the representation (resp. character) variety, see Paragraph 9 in \cite{Sik}. Following its terminology, we say that a representation is \textit{scheme smooth} (resp. \textit{smooth}) if it is a simple point of $\Homb(\Gamma,G)$ (resp. $\Hom(\Gamma,G)$). We use the same terminology for conjugacy class of representations.

The question whether the singular locus of the character variety coincides with the set of algebraic singularities of the irreducible part of the  (schematic) character variety or not is discussed in Section \ref{orbalgsing}. Proposition \ref{singvarcar} states that for $G=PSL(p,\mathbb{C})$ and $\Gamma$ a free group of rank $\geq 2$ or a closed surface group of genus $\geq 2$, these sets are the  same. This statement cannot be generalized to any finitely generated group $\Gamma$ and complex reductive group $G$ because of  Examples  \ref{algnotorb} and \ref{orbnotalg}.

\bigskip

When $\Gamma$ is a free group over $l\geq2$ generators, then  $\Hom(\Gamma,G)=G^l$ and $\Hom^i(\Gamma,G)$ being open in it, is a manifold. When $\Gamma$ is a closed surface group of genus $g\geq 2$ and $G$ is reductive, it can be shown that $\Hom^i(\Gamma,G)$ is a manifold (all its points are scheme smooth, c.f. Proposition 37 in \cite{Sik}, see also Paragraph 1.2 in  \cite{Gol}).  Since the action of $G/Z(G)$ on $\Hom^i(\Gamma,G)$ is proper and its stabilizers are finite, the quotient space $\chi^i(\Gamma,G)$ is, in these cases, an orbifold. The singular locus we defined above is   the orbifold locus of  $\chi^i(\Gamma,G)$, i.e. the set of points with a non-trivial local isotropy. This geometric interpretation as the set of orbifold singularities of an orbifold is our first reason for studying this singular locus. However, it appeared that this locus was well defined for any finitely generated group and that most results  we obtained were true for any finitely generated group, even if $\chi^i(\Gamma,G)$ is not an orbifold. 

\bigskip

From Section \ref{badsub} to the end of the paper, we only focus on the case $G=PSL(p,\mathbb{C})$ with $p$ a prime number.  For some proofs, it is necessary to deal with the case $p=2$ separately from the case $p$ odd. When this is the case, we will systematically assume  $p$  odd, leaving $p=2$ to the reader. Anyway, for $p=2$, most of these results have already been obtained by Heusener and Porti in \cite{H-P}.

In the sequel we will need a few notations. Fix a prime number $p$. Denote $\pi$ the natural projection from $SL(p,\mathbb{C})$ to $PSL(p,\mathbb{C})$. To simplify some proofs, it is easier to see matrices $M\in SL(p,\mathbb{C})$ as automorphisms of $\mathbb{C}^{\mathbb{Z}/p}$ with the canonical basis indexed by $\mathbb{Z}/p$. In particular their lines and rows will be indexed by $0,\dots, p-1$.

For any matrix $M\in SL(p,\mathbb{C})$, we will denote $\overline{M}$ its projection in $PSL(p,\mathbb{C})$.   $D$ will denote the subgroup of diagonal matrices in $SL(p,\mathbb{C})$ and $\xi$ will be a primitive $p^{\text{th}}$ root of the unity. When $p$ is odd, the diagonal matrix $D(\xi)$ is defined by $(D(\xi))_{i,i}:=\xi^{i}$ for $0\leq i\leq p-1$.  To any bijection $\sigma$ of $\mathbb{Z}/p$, we associate the corresponding permutation matrix $M_{\sigma}\in GL(p,\mathbb{C})$. Finally,  $c$ will denote the cyclic permutation $(0,1,\dots,p-1)$ of $\mathbb{Z}/p$. When $p=2$,  we define $D(\xi):=\begin{pmatrix}\sqrt{-1}&0\\0&-\sqrt{-1}\end{pmatrix}$ and $M_c:= \begin{pmatrix}0&\sqrt{-1}\\ \sqrt{-1}&0\end{pmatrix}$.  We prove, in Section \ref{badsub} :

\begin{restatable}{theorem}{classifthm}\label{classifcentr}
Let $p$ be a prime number and $\overline{H}$ be a bad subgroup of $PSL(p,\mathbb{C})$. Then, there are two cases :

\begin{itemize}

\item  $Z_{PSL(p,\mathbb{C})}(H)$ is isomorphic to $\mathbb{Z}/p\times\mathbb{Z}/p$. In which case $\overline{H}$ is conjugate to the group $\langle \overline{D(\xi)}\rangle\times \langle\overline{M_c}\rangle$ and $Z_{PSL(p,\mathbb{C})}(\overline{H})=\overline{H}$.

\item  $Z_{PSL(p,\mathbb{C})}(H)$ is isomorphic to $\mathbb{Z}/p$. In which case $\overline{H}$ is conjugate to  $\overline{K}\rtimes \langle\overline{M_c}\rangle$ where $\overline{K}$ is a non-trivial subgroup  of $\overline{D}$, different from $\langle \overline{D(\xi)}\rangle$ and invariant by the action of $\langle\overline{M_c}\rangle$, in particular $Z_{PSL(p,\mathbb{C})}(\overline{H})$ is conjugate to $\langle \overline{D(\xi)}\rangle$. 
\end{itemize}

\end{restatable}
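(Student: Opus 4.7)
The plan is to lift $\overline{H}$ to $H := \pi^{-1}(\overline{H}) \subseteq SL(p,\mathbb{C})$ and exploit a single non-trivial centralizing element. By Schur's lemma applied to the irreducible $H$, $Z_{SL(p,\mathbb{C})}(H) = Z(SL(p,\mathbb{C}))$, which is cyclic of order $p$. Since $\overline{H}$ is bad, there exists a non-trivial $\overline{g} \in Z_{PSL(p,\mathbb{C})}(\overline{H})$; any lift $g \in SL(p,\mathbb{C}) \setminus Z(SL(p,\mathbb{C}))$ defines a homomorphism $\chi_g : H \to Z(SL(p,\mathbb{C}))$ by $h \mapsto g h g^{-1} h^{-1}$, which is non-trivial (else $g$ would centralize $H$ in $SL(p,\mathbb{C})$) and hence surjective.

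I would next analyze the spectrum of $g$. If $\chi_g(h) = \xi^k$, then $h$ sends the $\lambda$-eigenspace of $g$ into the $\xi^k \lambda$-eigenspace, so irreducibility of $H$ forces $g$ to have $p$ distinct eigenvalues of the form $\lambda \xi^i$ with one-dimensional eigenspaces. For $p$ odd, the determinant-one condition reduces to $\lambda^p = 1$, so after rescaling $g$ by a central element and conjugating in $SL(p,\mathbb{C})$ I may assume $g = D(\xi)$. Then every $h \in H$ with $\chi_g(h) = \xi^k$ is of the form (diagonal)$\cdot M_c^k$, so $H \subseteq \langle D, M_c\rangle$. A further conjugation by a suitable diagonal matrix replaces a chosen $h_1 \in H$ with $\chi_g(h_1) = \xi$ by a scalar multiple of $M_c$, yielding $\overline{M_c} \in \overline{H}$ and $\overline{H} = \overline{K} \cdot \langle \overline{M_c}\rangle$, where $\overline{K} := \overline{H} \cap \overline{D}$ is automatically $\overline{M_c}$-stable. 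The group $\overline{K}$ must be non-trivial; otherwise $\overline{H}$ is cyclic abelian and hence reducible.

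Next I would compute $Z_{PSL(p,\mathbb{C})}(\overline{H})$. It contains $\overline{D(\xi)}$ by construction, so it sits inside $Z_{PSL(p,\mathbb{C})}(\overline{D(\xi)}) = \overline{\langle D, M_c\rangle}$ (a direct computation using the action on the eigenspaces of $D(\xi)$). Writing a candidate centralizer element as $\overline{D' M_c^k}$, centralization of $\overline{M_c}$ -- through the identity $D' M_c D'^{-1} = \mathrm{diag}(d'_{i+1}/d'_i) \cdot M_c$ requiring the diagonal to be a scalar $p$-th root of unity -- forces $\overline{D'} \in \langle \overline{D(\xi)}\rangle$. Centralization of $\overline{K}$ is automatic when $k = 0$; when $k \neq 0$, the identity $M_c^k D'' M_c^{-k} = \mathrm{diag}(d''_{i-k})$ together with $M_c^k D'' M_c^{-k} D''^{-1} \in Z(SL(p,\mathbb{C}))$ forces $d''_{i-k}/d''_i$ to be a constant $p$-th root of unity, so every $\overline{D''} \in \overline{K}$ is a power of $\overline{D(\xi)}$, giving $\overline{K} \subseteq \langle \overline{D(\xi)}\rangle$ and hence $\overline{K} = \langle \overline{D(\xi)}\rangle$ by non-triviality. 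The two cases then follow: if $\overline{K} = \langle \overline{D(\xi)}\rangle$ then $\overline{M_c}$ also centralizes $\overline{H}$, yielding $Z_{PSL(p,\mathbb{C})}(\overline{H}) = \langle \overline{D(\xi)}\rangle \times \langle \overline{M_c}\rangle \cong \mathbb{Z}/p \times \mathbb{Z}/p$ (which coincides with $\overline{H}$); otherwise only the $k = 0$ candidates survive and $Z_{PSL(p,\mathbb{C})}(\overline{H}) = \langle \overline{D(\xi)}\rangle \cong \mathbb{Z}/p$.

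The delicate points I anticipate are the eigenvalue normalization step -- verifying that the determinant-one condition allows a clean reduction to $g = D(\xi)$ for $p$ odd, which is also the reason for the separate treatment of $p = 2$ -- and the rigidity argument that a diagonal matrix whose ``shift-by-$k$'' relative ratios are all equal to a single $p$-th root of unity must be a scalar multiple of a power of $D(\xi)$.
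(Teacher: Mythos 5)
Your overall strategy is essentially the paper's: lift $\overline{H}$ to $H = \pi^{-1}(\overline{H})$, invoke Schur's lemma, apply the eigenspace-shift observation (Lemma~\ref{eigspace}) to a non-central lift $g$ of a centralizing element, conclude $g$ is conjugate to $D(\xi)$, and then conjugate further to place $\overline{M_c}$ inside $\overline{H}$ --- this reproduces Propositions~\ref{central} and~\ref{centralconj}. The spectrum argument, the reduction of the eigenvalue $\mu$ to a $p$-th root of unity via $\det g = 1$ for $p$ odd, and the diagonal conjugation carrying $h_1$ to a scalar multiple of $M_c$ are all correct and match the paper step for step.

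The gap is in the centralizer computation. You write that $Z_{PSL(p,\mathbb{C})}(\overline{H})$ contains $\overline{D(\xi)}$, ``so it sits inside $Z_{PSL(p,\mathbb{C})}(\overline{D(\xi)})$''. That ``so'' does not follow: containing an element $x$ gives $A \subseteq Z(x)$ only when $A$ is abelian, and $\overline{D(\xi)}$ itself need not lie in $\overline{H}$ (for instance $\overline{K}$ can be the image in $\overline{D}$ of the sign-diagonal matrices when $p$ is odd), so one cannot argue via $\overline{H}\subseteq Z_{PSL(p,\mathbb{C})}(\overline{D(\xi)})$ either. This unjustified inclusion is what lets you start from the restrictive form $\overline{x}=\overline{D'M_c^{\,k}}$. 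There are two ways to repair it: (i) prove that $Z_{PSL(p,\mathbb{C})}(\overline{H})$ is abelian --- this does follow from Schur, because the map $(u,h)\mapsto uhu^{-1}h^{-1}\in Z(SL(p,\mathbb{C}))$ on $\pi^{-1}(Z_{PSL(p,\mathbb{C})}(\overline{H}))\times H$ is bimultiplicative into an abelian group, so $[u_1,u_2]$ commutes with all of $H$ and is therefore central; or (ii) do what the paper does and use only that any centralizer element commutes with $\overline{M_c}\in\overline{H}$, which by the same distinct-eigenvalue trick gives the weaker but sufficient normal form $x = P(M_c)D(\xi)^k$ with $P$ a polynomial (Lemma~\ref{centrMc}), after which the linear-independence statement of Lemma~\ref{freeMc} lets you run the commutation with $\overline{K}$ exactly as in Proposition~\ref{centrKMc}. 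Your form $\overline{D'M_c^{\,k}}$ is strictly stronger than $P(M_c)D(\xi)^k$, which is precisely why it needs the extra input; once this is supplied, the remaining case analysis (including the ``shift-by-$k$'' rigidity observation forcing elements into $\langle\overline{D(\xi)}\rangle$) is sound.
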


For $p=2$, this is implied by   Remark  3.11  and Remark 4.3 in \cite{H-P}. As a result, any bad representation from $\Gamma$ to $PSL(p,\mathbb{C})$ is conjugate to a representation into $\overline{D}\rtimes \langle\overline{M_c}\rangle$. Therefore, the natural inclusion of $\Hom^i(\Gamma,\overline{D}\rtimes \langle \overline{M_c}\rangle)$ into $\Hom^i(\Gamma,PSL(p,\mathbb{C}))$ induces a surjective map $\varphi$ from $\chi^i(\Gamma,\overline{D}\rtimes \langle \overline{M_c}\rangle)$ onto $\chi^i_{Sing}(\Gamma,PSL(p,\mathbb{C}))$. This will give a   parametrization of the singular locus.

For any $\overline{\rho}$ in $\Hom(\Gamma,\langle \overline{M_c}\rangle)$, define $\mathcal{H}_{\overline{\rho}}^i:=\{\rho\in \Hom^i(\Gamma,\overline{D}\rtimes \langle \overline{M_c}\rangle)\mid q\circ \rho=\overline{\rho}\}$ and $\overline{\mathcal{H}_{\overline{\rho}}}^i$ be its projection on the character variety. Then  $\chi^i(\Gamma,\overline{D}\rtimes \langle \overline{M_c}\rangle)$ is the union of all $\overline{\mathcal{H}_{\overline{\rho}}}^i$. In section \ref{diagbyfin}, we give a cohomological description for $\overline{\mathcal{H}_{\overline{\rho}}}^i$. In  Section \ref{secinj}, we prove :

\begin{restatable}{theorem}{homeobarHrho}\label{homeobarHrho}

Let $\Gamma$ be a finitely generated group,  $p$ be a prime number and  $\overline{\rho}$    be a non-trivial morphism in $\Hom(\Gamma,\langle \overline{M_c}\rangle)$. Then $\varphi_{|\overline{\mathcal{H}_{\overline{\rho}}}^i}$ is a homeomorphism onto its image.

\end{restatable}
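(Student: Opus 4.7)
The plan is to split the proof into injectivity of $\varphi|_{\overline{\mathcal{H}_{\overline{\rho}}^i}}$ and continuity of its inverse, the heart of the argument being a centralizer computation needed for injectivity. Suppose $\rho_1, \rho_2 \in \mathcal{H}_{\overline{\rho}}^i$ and $g \in PSL(p,\mathbb{C})$ satisfies $g\rho_1 g^{-1} = \rho_2$; I aim to produce $h \in G := \overline{D}\rtimes\langle\overline{M_c}\rangle$ with $h\rho_1 h^{-1} = \rho_2$. Setting $N = \ker\overline{\rho}$ and $\overline{A}_i = \rho_i(N) \subseteq \overline{D}$, I first observe that $\overline{A}_i$ is non-trivial (otherwise $\rho_i$ would factor through $\langle\overline{M_c}\rangle$, yielding a cyclic image whose centralizer is a whole maximal torus, contradicting irreducibility) and $\overline{M_c}$-invariant (since $N$ is normal in $\Gamma$, and the diagonal part of $\rho_i(\gamma_0)$, for any $\gamma_0$ with $\overline{\rho}(\gamma_0)=\overline{M_c}$, commutes with the elements of $\overline{A}_i \subseteq \overline{D}$).

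The key claim is that the identity component $Z_{PSL(p,\mathbb{C})}(\overline{A}_i)^0$ equals $\overline{D}$. This is equivalent to the standard characters $\chi_0|_{A_i},\ldots,\chi_{p-1}|_{A_i}$ of $\mathbb{C}^p$ being pairwise distinct, and the equivalence relation $\chi_j|_{A_i} = \chi_k|_{A_i}$ on $\{0,\ldots,p-1\}$ is $c$-invariant by the $\overline{M_c}$-invariance of $\overline{A}_i$. For $p$ prime, the only $c$-invariant equivalence relations on $\mathbb{Z}/p$ are the trivial one and the total one, and the latter would force $\overline{A}_i$ to be trivial. Applying the claim to both $\overline{A}_1$ and $\overline{A}_2 = g\overline{A}_1 g^{-1}$ yields $g\overline{D} g^{-1} = \overline{D}$, so $g \in N_{PSL(p,\mathbb{C})}(\overline{D}) = \overline{D}\rtimes S_p$. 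Writing $g = t\overline{M_\sigma}$ and comparing the two sides of the identity $g\rho_1(\gamma_0)g^{-1} = \rho_2(\gamma_0)$ in the unique semidirect decomposition of $N_{PSL(p,\mathbb{C})}(\overline{D})$, the $S_p$-parts force $\sigma c\sigma^{-1} = c$. Since $c$ is a $p$-cycle with $p$ prime, $Z_{S_p}(c) = \langle c\rangle$, hence $\sigma \in \langle c\rangle$, $\overline{M_\sigma} \in \langle\overline{M_c}\rangle$, and $g \in \overline{D}\cdot\langle\overline{M_c}\rangle = G$, completing injectivity.

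Continuity of the inverse will follow from the general fact that the projection to the character variety is open, being the quotient map of a continuous group action. Given an open set $W \subseteq \mathcal{H}_{\overline{\rho}}^i$, the saturation $PSL(p,\mathbb{C})\cdot W$ is open in $\Hom^i(\Gamma, PSL(p,\mathbb{C}))$, and its image in $\chi^i(\Gamma, PSL(p,\mathbb{C}))$ coincides with that of $W$; openness of the projection then shows this image is open in $\chi^i(\Gamma, PSL(p,\mathbb{C}))$, hence open in the image of $\varphi|_{\overline{\mathcal{H}_{\overline{\rho}}^i}}$ with the subspace topology. Together with the analogous statement for $\chi^i(\Gamma, G)$ and the fact, supplied by injectivity, that the $PSL(p,\mathbb{C})$- and $G$-equivalence relations on $\mathcal{H}_{\overline{\rho}}^i$ coincide, both $\overline{\mathcal{H}_{\overline{\rho}}^i}$ and the image of $\varphi|_{\overline{\mathcal{H}_{\overline{\rho}}^i}}$ are realized as the same set-theoretic quotient of $\mathcal{H}_{\overline{\rho}}^i$ carrying the same topology, yielding the desired homeomorphism. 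I expect the main obstacle to be the centralizer identity of the first step: everything else reduces to formal arguments about quotient maps or to the combinatorics of the normalizer $N_{PSL(p,\mathbb{C})}(\overline{D})$.
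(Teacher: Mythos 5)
Your injectivity argument is correct and takes a genuinely different route from the paper's. Where the paper conjugates $\rho(\gamma_0),\rho'(\gamma_0)$ both to $\overline{M_c}$, invokes Lemma~\ref{centrMc} to expand the conjugating element as $\sum a_k M_c^k D(\xi)^s$, and then uses the explicit Lemma~\ref{freeMc} together with a well-chosen diagonal $\rho(\gamma)$ to kill all but one $a_k$, you instead compute the \emph{identity component} of the centralizer of $\overline{A}_i=\rho_i(\Ker\overline{\rho})\subseteq\overline{D}$ and show it is $\overline{D}$: the partition of $\mathbb{Z}/p$ by coincidence of the characters $\chi_j/\chi_k$ on $\overline{A}_i$ is $c$-invariant by $\overline{M_c}$-invariance of $\overline{A}_i$, and since $p$ is prime the only $c$-invariant partitions are the discrete one (giving $Z^0=\overline{D}$) and the indiscrete one (forcing $\overline{A}_i$ trivial, contradicting irreducibility). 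This forces $g\in N_{PSL(p,\mathbb{C})}(\overline{D})=\overline{D}\rtimes S_p$, and comparing Weyl-group components of $g\rho_1(\gamma_0)g^{-1}=\rho_2(\gamma_0)$ gives $\sigma\in Z_{S_p}(c)=\langle c\rangle$ since $c$ is a $p$-cycle with $p$ prime. This is cleaner and more structural than the paper's coordinate computation, though it leans on knowing $N_{PSL(p,\mathbb{C})}(\overline{D})$ and the Weyl group — a fair trade. (One small wording issue: the reason $\overline{A}_i$ is $\overline{M_c}$-invariant is that $\rho_i(\gamma_0 n\gamma_0^{-1})=\overline{M_c}\,\rho_i(n)\,\overline{M_c}^{-1}$ for $n\in\Ker\overline{\rho}$, the diagonal part of $\rho_i(\gamma_0)$ cancelling since $\overline{D}$ is abelian — not that it ``commutes with the elements of $\overline{A}_i$.'')

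Your continuity argument, however, has a genuine gap. You claim that for any open $W\subseteq\mathcal{H}_{\overline{\rho}}^i$ the saturation $PSL(p,\mathbb{C})\cdot W$ is open in $\Hom^i(\Gamma,PSL(p,\mathbb{C}))$. This is false: $W$ is open only relative to $\mathcal{H}_{\overline{\rho}}^i$, which sits inside $\Hom^i(\Gamma,PSL(p,\mathbb{C}))$ as a proper closed subvariety of strictly smaller dimension. Its full saturation $PSL(p,\mathbb{C})\cdot\iota(\mathcal{H}_{\overline{\rho}}^i)$ is the set of all irreducible representations conjugate into $\mathcal{H}_{\overline{\rho}}^i$ with the fixed $\overline{\rho}$-type, which is nowhere dense; in particular the saturation of any $W$ has empty interior, so it is never open. ``The quotient map is open'' (which is true, for the full action) does not imply that saturations of relatively open subsets of a thin invariant-but-not-saturated slice are open. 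This is precisely the point the paper works around: using properness, for a \emph{closed} subset $F$ the saturation $PSL(p,\mathbb{C})\cdot F$ is closed; one then writes $\iota(U_0)=V\cap\iota(\mathcal{H}_{\overline{\rho}}^i)$ for some open $V$, sets $F:=V^c\cap\iota(\mathcal{H}_{\overline{\rho}}^i)$ (closed because $\iota(\mathcal{H}_{\overline{\rho}}^i)$ is closed), and replaces $V$ by the \emph{saturated} open set $V_0:=V\setminus PSL(p,\mathbb{C})\cdot F$, which by injectivity still satisfies $V_0\cap\iota(\mathcal{H}_{\overline{\rho}}^i)=\iota(U_0)$; only then does $\psi_2(V_0)$ serve as the required open set in $\chi^i(\Gamma,PSL(p,\mathbb{C}))$. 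Without such a device (or an equivalent one, e.g.\ showing $\varphi|_{\overline{\mathcal{H}_{\overline{\rho}}}^i}$ is a closed map by pushing the closed saturated set $PSL(p,\mathbb{C})\cdot\iota(F_0)$ through the quotient map $\psi_2$), the homeomorphism claim is not established.
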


As a result, $\chi^i_{Sing}(\Gamma,PSL(p,\mathbb{C}))$ is a union of a finite number of $\varphi(\overline{\mathcal{H}_{\overline{\rho}}}^i)$ whose geometry is given by cohomology groups. At the end of Section \ref{secinj}, we briefly justify that we only need to chose, for any normal subgroup $K$ of index $p$ in $\Gamma$, one $\overline{\rho}_K$ in $\Hom(\Gamma,\langle \overline{M_c}\rangle)$ whose kernel is $K$ to get $\chi^i_{Sing}(\Gamma,PSL(p,\mathbb{C}))$. To sum up :
$$\chi^i_{Sing}(\Gamma,PSL(p,\mathbb{C}))=\bigcup_{\substack{K\triangleleft \Gamma\\   \text{$[\Gamma:K]=p$} } }\varphi(\overline{\mathcal{H}_{\overline{\rho}_K}}^i)$$

The subset $\varphi(\overline{\mathcal{H}_{\overline{\rho}_K}}^i)$  will be called the \textit{pseudo-component} associated to $K$. When $\Gamma$ is a free group these are the irreducible components of the singular locus (this is not true for all $\Gamma$). In Section \ref{secint}, we study the possible intersections between different pseudo-components. Recall that the $p$-rank of $\Gamma$ is by definition $\dim_{\mathbb{Z}/p}(\Gamma^{Ab}/p\Gamma^{Ab})$.

\begin{restatable}{theorem}{combi}\label{combi}

Let $\Gamma$ be a finitely generated group,  $p$ be a prime number. Let $r$ be the $p$-rank of $\Gamma$. Then : 
\begin{enumerate}
\item $\chi^i_{Sing}(\Gamma,PSL(p,\mathbb{C}))$ is the union of $\frac{p^r-1}{p-1}$ pseudo-components.

\item The intersection of two different pseudo-components is finite of cardinal $p-1$. All its elements are conjugacy classes of abelian irreducible representations.

\item Conversely, any conjugacy class of abelian irreducible representations belongs to exactly $p+1$ pseudo-components.

\item There are exactly $\frac{(p^r-1)(p^{r-1}-1)}{p^2-1}$ conjugacy classes  of abelian irreducible representations from $\Gamma$ to $PSL(p,\mathbb{C})$. 
 \end{enumerate}

\end{restatable}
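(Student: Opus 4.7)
The plan is to reduce all four statements to the combinatorics of surjections $\Gamma\to(\mathbb{Z}/p)^2$ together with the natural $SL(2,\mathbb{F}_p)$-action on the finite abelian group $N=\langle\overline{D(\xi)}\rangle\times\langle\overline{M_c}\rangle$ coming from its normalizer in $PSL(p,\mathbb{C})$. I will write $PC_K=\varphi(\overline{\mathcal{H}_{\overline{\rho}_K}}^i)$ for the pseudo-component associated to a normal subgroup $K$ of index $p$.

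For part~(1), the pseudo-components are indexed by such subgroups $K\triangleleft\Gamma$, equivalently by non-trivial homomorphisms $\Gamma\to\mathbb{Z}/p$ modulo $\Aut(\mathbb{Z}/p)=(\mathbb{Z}/p)^*$. Every such homomorphism factors through $\Gamma^{Ab}\otimes\mathbb{F}_p\cong(\mathbb{F}_p)^r$, so there are $p^r-1$ non-trivial ones and hence $(p^r-1)/(p-1)$ kernels.

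The heart of the argument is to determine, given a bad class $[\rho]$, which pseudo-components contain it, distinguishing the two cases of Theorem~\ref{classifcentr}. In the type~2 (non-abelian) case with image $H=\overline{K}\rtimes\langle\overline{M_c}\rangle$, I would show that $\overline{K}$ is canonically characterized inside $H$ — for instance as its own centralizer, which works because $\overline{K}\neq\langle\overline{D(\xi)}\rangle$ forces $\overline{M_c}$ to act on $\overline{K}$ without non-trivial fixed point; consequently $\ker(q\circ\rho)=\rho^{-1}(\overline{K})$ is intrinsic to $[\rho]$, so $[\rho]$ lies in a single pseudo-component. In the type~1 (abelian) case, after conjugation $\rho(\Gamma)=N$ and I identify $[\rho]$ with an $SL(2,\mathbb{F}_p)$-orbit of a surjection $f:\Gamma\to N$; the outer group $SL(2,\mathbb{F}_p)=N_{PSL(p,\mathbb{C})}(N)/Z_{PSL(p,\mathbb{C})}(N)$ preserves the symplectic form on $N$ inherited from the commutator pairing on its Heisenberg preimage in $SL(p,\mathbb{C})$. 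With this dictionary, $[\rho]\in PC_K$ iff $K=f^{-1}(L)$ for some line $L\subset N$; the $p+1$ lines of $N$ give $p+1$ pairwise distinct kernels (distinct lines meet only at $0$), proving~(3).

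For part~(4), $SL(2,\mathbb{F}_p)$ acts freely on the set of surjections $(\mathbb{F}_p)^r\twoheadrightarrow(\mathbb{F}_p)^2$ (the equation $g\circ f=f$ for surjective $f$ forces $g=1$), and there are $(p^r-1)(p^r-p)$ such surjections, yielding $\frac{(p^r-1)(p^r-p)}{|SL(2,\mathbb{F}_p)|}=\frac{(p^r-1)(p^{r-1}-1)}{p^2-1}$ orbits. For part~(2), given $K_1\neq K_2$ the primality of $p$ forces $K_1K_2=\Gamma$, so the canonical map $\bar f:\Gamma\to\Gamma/K_1\times\Gamma/K_2\cong(\mathbb{F}_p)^2$ is surjective; a class in $PC_{K_1}\cap PC_{K_2}$ corresponds to an $SL(2,\mathbb{F}_p)$-orbit containing a surjection $f=A\bar f$ with $A\in GL(2,\mathbb{F}_p)$ stabilizing the unordered pair of coordinate axes (i.e.\ $A$ diagonal or anti-diagonal). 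Two such $A\bar f$, $A'\bar f$ lie in the same $SL(2,\mathbb{F}_p)$-orbit iff $\det A=\det A'$, so these orbits are parameterised by $\det A\in\mathbb{F}_p^*$, yielding exactly $p-1$ intersection classes, all abelian irreducible by construction.

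The main obstacle I anticipate is the non-abelian uniqueness — a clean proof that, inside $H=\overline{K}\rtimes\langle\overline{M_c}\rangle$, the subgroup $\overline{K}$ is intrinsic (independent of the embedding of $H$ into $\overline{D}\rtimes\langle\overline{M_c}\rangle$). A secondary subtlety is pinning down that the outer action on $N$ is precisely $SL(2,\mathbb{F}_p)$ and not all of $GL(2,\mathbb{F}_p)$, for which I would use the symplectic form preserved on the Heisenberg preimage of $N$ in $SL(p,\mathbb{C})$.
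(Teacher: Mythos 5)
Parts (1), (3) and the count in (4) are correct, and for (4) your direct argument — counting surjective linear maps $(\mathbb{Z}/p)^r\to(\mathbb{Z}/p)^2$ modulo the free $SL(2,\mathbb{Z}/p)$-action — is actually cleaner than the paper's, which instead double-counts incidences between abelian irreducible classes and unordered pairs of pseudo-components using (2) and (3). The secondary subtlety you flag — that the normalizer of $N=\langle\overline{D(\xi)}\rangle\times\langle\overline{M_c}\rangle$ in $PSL(p,\mathbb{C})$ induces exactly $SL(2,\mathbb{Z}/p)$ on $N$ — is Lemma \ref{countZp2} in the paper, proved there by exhibiting an explicit Vandermonde matrix and a quadratic-phase diagonal matrix realizing generators of $SL(2,\mathbb{Z}/p)$ and then pinning the determinant to $1$ via Lemma \ref{centrdxi}; your commutator-pairing sketch is a plausible alternative but would need to be carried out.

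The genuine gap is in the non-abelian uniqueness step of (2). You assert that $\overline{K}\neq\langle\overline{D(\xi)}\rangle$ forces $\overline{M_c}$ to act on $\overline{K}$ with no non-trivial fixed point. This is false whenever $\langle\overline{D(\xi)}\rangle\subsetneq\overline{K}$, which is an admissible type-2 case (for instance $\overline{K}=\overline{D}$, or any finite $\overline{K}$ of order $p^2$ containing $\langle\overline{D(\xi)}\rangle$): the fixed-point set is $\overline{K}\cap\langle\overline{D(\xi)}\rangle$, non-trivial here. Nor does ``self-centralizing'' alone characterize $\overline{K}$ inside $H=\overline{K}\rtimes\langle\overline{M_c}\rangle$: if $\langle\overline{D(\xi)}\rangle\subset\overline{K}$ then $\langle\overline{D(\xi)}\rangle\times\langle\overline{M_c}\rangle\leq H$ is also abelian and self-centralizing in $H$, and when $|\overline{K}|=p^2$ the group $H$ is extraspecial of order $p^3$ and has $p+1$ maximal abelian subgroups of index $p$, all self-centralizing. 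The paper avoids the intrinsicness question entirely: Proposition \ref{abelirred} takes two conjugators $g,h$ placing $\rho$ into $\iota(\mathcal{H}_{\overline{\rho}_K}^i)$ and $\iota(\mathcal{H}_{\overline{\rho}_{K'}}^i)$ and shows directly that if $\rho(K\cap K')$ were non-trivial then $hg^{-1}$ would normalize $\langle\overline{D(\xi)}\rangle$, hence $\overline{D}$, forcing $K=K'$; so $\ker\rho=K\cap K'$ and $\rho(\Gamma)\cong(\mathbb{Z}/p)^2$ is abelian. You would need to substitute this (or something equivalent) for your intrinsicness claim. Your count of $p-1$ intersection classes is right — it reduces to $|GL(2,\mathbb{Z}/p)|/|SL(2,\mathbb{Z}/p)|$ — though the extra constraint you impose on $A$ (stabilizing the unordered pair of coordinate axes) is unnecessary: by Proposition \ref{abirredK} any isomorphism $\Gamma/(K_1\cap K_2)\to N$ already yields a class lying in both pseudo-components.
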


One can compare this theorem  to the well-known example when $p=2$ and $\Gamma=\mathbb{F}_2=\langle a,b\rangle$ is a free group over $2$ generators (see Example 4.4 in \cite{H-P} or Corollary 10 in \cite{Sik15}).

\begin{example}\label{F2PSL2} First, the natural map $\pi^*$ from $SL(2,\mathbb{C})^2=\Hom(\mathbb{F}_2,SL(2,\mathbb{C}))$ to $PSL(2,\mathbb{C})^2=\Hom(\mathbb{F}_2,PSL(2,\mathbb{C}))$ is onto. The automorphism group of this orbifold cover is  the Klein group $(\mathbb{Z}/2)^2$ : $(i,j)\cdot (A,B):=((-1)^iA,(-1)^jB)$.

On the other hand, an old result of Vogt states that $\chi(\mathbb{F}_2,SL(2,\mathbb{C}))=\mathbb{C}^3$ by sending $[\rho]$ to $(\tr(\rho(a)),\tr(\rho(b)),\tr(\rho(ab)))$.

Since $\pi^*$ induces a map from $\chi(\mathbb{F}_2,SL(2,\mathbb{C}))=\mathbb{C}^3$ to $\chi(\mathbb{F}_2,PSL(2,\mathbb{C}))$, we can identify $\chi(\mathbb{F}_2,PSL(2,\mathbb{C}))$ to $\chi(\mathbb{F}_2,SL(2,\mathbb{C}))/K$ where the action of $K$ is given by $(i,j)\cdot(x,y,z):=((-1)^ix,(-1)^jy,(-1)^{i+j}z) $. 

Computing the invariant ring  $\mathbb{C}[x,y,z]^{(\mathbb{Z}/2)^2}$, one identifies   $\chi(\mathbb{F}_2,PSL(2,\mathbb{C}))$ to $\{(X,Y,Z,T)\in\mathbb{C}^4\mid T^2=XYZ\}$ and explicitly computes $\pi^*(x,y,z)$ to be $(x^2,y^2,z^2,xyz)$. Removing three non-irreducible points, the set of   branch points for $\pi^*$  is, by definition,  the singular locus  of the orbifold $\chi^i_{Sing}(\mathbb{F}_2,PSL(2,\mathbb{C}))$.

The set of branch points is easily computed to be the union of $X=Z=T=0$, $Y=Z=T=0$ and $X=Y=T=0$. Each of them (minus one non-irreducible point) corresponds to one of  the three  pseudo-components in the theorem above. Any two of them intersect in one single point whose coordinates are   $X=Y=Z=T=0$. It corresponds to the abelian irreducible representation $a\mapsto \overline{D(\xi)}$, $b\mapsto \overline{M_c}$ and thus  fully agrees with the aforementioned theorem.

 \end{example} 

When  $\Gamma=\mathbb{F}_l$ is a free group of rank $l\geq 2$, we prove in Section \ref{freegrpcase} (see Corollary \ref{orblocfree}) that $\chi^i_{Sing}(\mathbb{F}_l,PSL(p,\mathbb{C}))$ is connected of dimension $(p-1)(l-1)$. 

In \cite{F-L-R}, Florentino, Lawton and Ramras  compute higher homotopy groups of the irreducible part of  free groups character  varieties into $G_n:=SL(n,\mathbb{C})$ or $GL(n,\mathbb{C})$ : $\pi_k(\chi^i(\mathbb{F}_l,G_n))$  (see Theorem 5.4 in loc. cit.).

In Remark 5.7, they conjecture that for any complex reductive group $G$ and $l$ big enough, $\pi_2(\chi^{good}(\mathbb{F}_l,G))=\pi_1(G/Z(G))$ provided that we can bound above the dimension of bad representations.  The dimension count we obtained directly leads to the validity of the conjecture when $G=PSL(p,\mathbb{C})$ for $(l-1)(p-1)\geq 2$.

\bigskip

In Section \ref{surfgrpcase}, we study the case when $\Gamma=\pi_1(\Sigma_g)$ is a closed surface group  of genus $g\geq 2$.  Corollary \ref{conncomporblocsurfgrp} states that $\chi^i_{Sing}(\pi_1(\Sigma_g),PSL(p,\mathbb{C}))$ has $p$ connected components (given by the Euler invariant).  Jun Li's Theorem in \cite{Li} implies that  $\pi_0(\chi(\pi_1(\Sigma_g),G))$ is equal to $\pi_1(G)$. Therefore, when $G=PSL(p,\mathbb{C})$, each connected component of $\chi(\pi_1(\Sigma_g),PSL(p,\mathbb{C}))$ contains one unique connected component of  $\chi^i_{Sing}(\pi_1(\Sigma_g),PSL(p,\mathbb{C}))$.

\section{Bad subgroups   in $PSL(p,\mathbb{C})$}\label{badsub}

In this section, we classify, up to conjugation, bad subgroups in $PSL(p,\mathbb{C})$. More precisely, we prove :

 \classifthm*

The main idea to prove this theorem is to pull-back the problem in $SL(p,\mathbb{C})$ using $\pi$. For $A\in SL(p,\mathbb{C})$ and $\lambda\in \mathbb{C}$, we denote $E_{\lambda}(A)=\{v\in \mathbb{C}^{\mathbb{Z}/p}\mid Av=\lambda v\}$. 

\begin{lemma}\label{eigspace}
Let $n\geq 1$  and $A,B$ be two matrices in $GL(n,\mathbb{C})$. Assume $[A,B]=\lambda I_n$ where $\lambda \in \mathbb{C}^*$.Then, for $\mu \in \mathbb{C}$, we have $BE_{\mu}(A)=E_{\lambda\mu}(A)$. 
\end{lemma}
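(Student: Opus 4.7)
The plan is to rewrite the commutator relation in the form $AB=\lambda BA$ and then check both inclusions directly from this identity.

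First I would observe that $[A,B]=\lambda I_n$ means $ABA^{-1}B^{-1}=\lambda I_n$, hence $AB=\lambda BA$. Starting from this, for any $v\in E_\mu(A)$ one computes
\[
A(Bv)=(AB)v=\lambda(BA)v=\lambda B(\mu v)=\lambda\mu\,(Bv),
\]
which shows $Bv\in E_{\lambda\mu}(A)$, and hence the inclusion $BE_\mu(A)\subseteq E_{\lambda\mu}(A)$.

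For the reverse inclusion I would extract from $AB=\lambda BA$ the ``inverse'' commutation rule $AB^{-1}=\lambda^{-1}B^{-1}A$ (multiply on the left and on the right by $B^{-1}$). Then for $w\in E_{\lambda\mu}(A)$,
\[
A(B^{-1}w)=\lambda^{-1}B^{-1}(Aw)=\lambda^{-1}B^{-1}(\lambda\mu\,w)=\mu\,(B^{-1}w),
\]
so $B^{-1}w\in E_\mu(A)$, that is $w\in BE_\mu(A)$. This yields the opposite inclusion $E_{\lambda\mu}(A)\subseteq BE_\mu(A)$, and combining the two gives the equality.

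There is no real obstacle: once the commutation identity $AB=\lambda BA$ is in hand, the lemma is essentially a two-line eigenspace calculation, with invertibility of $B$ playing the only nontrivial role (it makes $B$ a bijection between the two eigenspaces). Alternatively, one could note that $B$ restricts to an injective linear map $E_\mu(A)\to E_{\lambda\mu}(A)$, and apply the same argument to $B^{-1}$ (which satisfies $AB^{-1}=\lambda^{-1}B^{-1}A$) to get an injection the other way, forcing equality by finite-dimensionality; but the direct two-inclusion argument above is cleaner.
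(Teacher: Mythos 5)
Your proof is correct and follows essentially the same route as the paper's: rewrite the commutator relation as $AB=\lambda BA$ and push vectors through to compare eigenspaces, with invertibility of $B$ carrying the burden. The paper compresses your two inclusions into a single chain of equivalences $v\in E_\mu(A)\Leftrightarrow Bv\in E_{\lambda\mu}(A)$, but the substance is identical.
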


\begin{proof}
The assumption implies that $BA=\lambda^{-1}AB$. For any $v\in \mathbb{C}^{\mathbb{Z}/n}$, we have the following equivalences : $v\in E_{\mu}(A)\Leftrightarrow Av=\mu v\Leftrightarrow BAv=\mu Bv\Leftrightarrow ABv=\lambda \mu Bv$ i.e. $Bv\in E_{\lambda\mu}(A)$. Therefore, $BE_{\mu}(A)=E_{\lambda\mu}(A)$.  \end{proof}

This directly leads to :

\begin{proposition}\label{central}
Let $p$ be a prime number and $\overline{H}$ be a bad subgroup of $PSL(p,\mathbb{C})$. Then, up to conjugation, $\overline{D(\xi)}$ belongs to $Z_{PSL(p,\mathbb{C})}(\overline{H})$. 
\end{proposition}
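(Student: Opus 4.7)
The plan is to pull the problem back to $SL(p,\mathbb{C})$. Since $Z(PSL(p,\mathbb{C}))$ is trivial, the badness of $\overline{H}$ supplies a non-trivial element $\overline{M}\in Z_{PSL(p,\mathbb{C})}(\overline{H})$. Choose a lift $M\in SL(p,\mathbb{C})$ of $\overline{M}$. For every $h$ in the full preimage $H:=\pi^{-1}(\overline{H})$, the commutator $[M,h]$ lies in $\ker\pi=Z(SL(p,\mathbb{C}))$, so $[M,h]=\zeta_h I_p$ for some $p$-th root of unity $\zeta_h$. Applying Lemma \ref{eigspace} to the pair $(M,h)$, each $h\in H$ permutes the eigenspaces of $M$ according to the rule $E_\mu(M)\mapsto E_{\zeta_h\mu}(M)$; this character-like datum $h\mapsto \zeta_h$ will do all the work.

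Next I would check that $H$ is still irreducible in $SL(p,\mathbb{C})$: parabolic subgroups of $SL(p,\mathbb{C})$ are precisely the $\pi$-preimages of parabolic subgroups of $PSL(p,\mathbb{C})$, so irreducibility descends under $\pi^{-1}$. Schur's lemma then forces the character $h\mapsto \zeta_h$ to be non-trivial: otherwise $M$ would commute with an irreducible subgroup and thus be scalar, contradicting $\overline{M}\neq 1$. Pick $h_0\in H$ with $\zeta:=\zeta_{h_0}\neq 1$; as $p$ is prime, $\zeta$ is a primitive $p$-th root of unity.

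Now I would argue that $M$ is diagonalizable with simple spectrum equal to the full set of $p$-th roots of unity. For any eigenvalue $\mu$ of $M$, the orbit $\{\mu,\zeta\mu,\dots,\zeta^{p-1}\mu\}$ is contained in the spectrum of $M$, so has size $1$ or $p$; since $\zeta$ has order $p$, the size is $p$. As $M$ has only $p$ eigenvalues counted with multiplicity, there is exactly one such orbit and each eigenspace is one-dimensional. The determinant condition reads $\mu^p\,\zeta^{p(p-1)/2}=1$; for $p$ odd the exponent $p(p-1)/2$ is divisible by $p$, so $\mu^p=1$. Thus the spectrum of $M$ is exactly $\{1,\xi,\xi^2,\dots,\xi^{p-1}\}$.

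Finally, after choosing an eigenbasis ordered so that the $i$-th vector has eigenvalue $\xi^i$, $M$ becomes the diagonal matrix $D(\xi)$; that is, there exists $g\in SL(p,\mathbb{C})$ with $gMg^{-1}=D(\xi)$, and then $\overline{g}\,\overline{H}\,\overline{g}^{-1}$ is centralized by $\overline{D(\xi)}$, which is exactly the conclusion. The main delicate points are (i) the descent of irreducibility through $\pi$ in order to invoke Schur, and (ii) the determinant computation that pins down $\mu$; both are short. The case $p=2$ would be handled analogously using the specific $D(\xi)$ from the introduction, with the determinant identity $\mu^2=-1$ replacing $\mu^p=1$.
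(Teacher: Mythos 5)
Your proof is correct and follows essentially the same route as the paper: pull back to $SL(p,\mathbb{C})$, find a non-central element whose commutators with $H$ land in the center, apply the eigenspace lemma to see that some $h_0$ shifts the spectrum of $M$ by a primitive $p$-th root of unity, and use $\det M=1$ (with $p$ odd) to pin down the spectrum as exactly the $p$-th roots of unity, so $M$ is conjugate to $D(\xi)$. The one thing you add that the paper leaves implicit is the short justification that $\pi^{-1}(\overline{H})$ is irreducible in $SL(p,\mathbb{C})$ (parabolics of $SL(p,\mathbb{C})$ contain the center and correspond to parabolics of $PSL(p,\mathbb{C})$), which is needed before invoking Schur's lemma.
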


\begin{proof}
Assume $p$   odd. Let $H$ be $\pi^{-1}(\overline{H})$ and $U$ be $\pi^{-1}(Z_{PSL(p,\mathbb{C})}(\overline{H}))$. Since $Z_{PSL(p,\mathbb{C})}(\overline{H})$ is non-trivial, it follows that $U$ contains an element $u$ which is not central. Furthermore, since $H$ is an irreducible subgroup of $SL(p,\mathbb{C})$, its centralizer is $Z(SL(p,\mathbb{C}))$ by Schur's lemma. 

As a result, there exists $h_0\in H$ such that $[h_0,u]\neq I_p$. However, since $\overline{h_0}$ and $\overline{u}$ commute, it follows that $[h_0,u]$ belongs to $Z(SL(p,\mathbb{C}))$. Therefore, there exists $0<k<p$ such that $[h_0,u]=\xi^kI_p$.

Applying Lemma \ref{eigspace}, $h_0$ acts on the spectrum of $u$ by multiplying by $\xi^k$. Let $\mu$ be an eigenvalue of $u$, since $\xi^k$ is a non-trivial primitive $p^{\text{th}}$ root of the unity, $u$ has $p$ different eigenvalues $\mu,\xi^k\mu,\dots,\xi^{k(p-1)}\mu$. Finally, since $u$ is a matrix of dimension $p$, its eigenspaces have dimension $1$ and $u$ is conjugate to the diagonal matrix with $\mu,\xi^k\mu,\dots,\xi^{k(p-1)}\mu$ on the diagonal. Since $\det(u)=1$ and $p$ is odd, we see that $\mu$ is a $p^{\text{th}}$ root of the unity. Therefore, $u$ is  conjugate to $D(\xi)$.  \end{proof}

Because of this proposition, we   compute the centralizer of $\overline{D(\xi)}$ in $PSL(p,\mathbb{C})$.

\begin{lemma}\label{centrdxi}
Let $p$ be a prime number. Then $Z_{PSL(p,\mathbb{C})}(\overline{D(\xi)})=\overline{D}\rtimes \langle \overline{M_c}\rangle$ where  $\langle \overline{M_c}\rangle$ acts on $\overline{D}$ by conjugation.
\end{lemma}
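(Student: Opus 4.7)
The plan is to lift to $SL(p,\mathbb{C})$ via $\pi$ and exploit that $D(\xi)$ has $p$ distinct eigenvalues on $1$-dimensional eigenspaces, namely the lines spanned by the standard basis vectors $e_0,\ldots,e_{p-1}$. An element $\overline{g}\in PSL(p,\mathbb{C})$ centralizes $\overline{D(\xi)}$ if and only if any lift $g\in SL(p,\mathbb{C})$ satisfies $[g,D(\xi)]\in Z(SL(p,\mathbb{C}))$, equivalently $gD(\xi)g^{-1}=\xi^k D(\xi)$ for some $k\in\mathbb{Z}/p$, since $Z(SL(p,\mathbb{C}))$ consists of the scalar matrices $\xi^k I_p$.

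With this relation in hand, Lemma \ref{eigspace} (applied with $\lambda=\xi^k$) shows that $g$ permutes these eigenspaces cyclically, sending $E_{\xi^i}(D(\xi))=\mathbb{C} e_i$ to $E_{\xi^{i+k}}(D(\xi))=\mathbb{C} e_{i+k}$. Therefore $g$ factors as $M_{c^k}\cdot d$ for a diagonal matrix $d$, and the constraint $\det(g)=1$, combined with $\det(M_{c^k})=1$ (the sign of the $p$-cycle $c^k$ is $(-1)^{p-1}=1$ when $p$ is odd, and $M_c$ is defined with $\sqrt{-1}$ entries so that $\det(M_c)=1$ when $p=2$), forces $d\in D$. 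Projecting to $PSL(p,\mathbb{C})$ then yields the inclusion $Z_{PSL(p,\mathbb{C})}(\overline{D(\xi)})\subseteq \langle\overline{M_c}\rangle\cdot\overline{D}$.

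For the reverse inclusion, elements of $\overline{D}$ trivially commute with $\overline{D(\xi)}$, while a short direct computation gives $M_c D(\xi)M_c^{-1}=\xi^{-1}D(\xi)$, so $\overline{M_c}$ also lies in the centralizer. To verify the semidirect product structure, I would note that $M_c$ normalizes $D$ (it permutes the diagonal entries), so $\overline{D}$ is normal in the product, and that $\overline{D}\cap\langle\overline{M_c}\rangle$ is trivial because a power $M_c^k$ with $1\leq k<p$ is a nontrivial permutation matrix and hence cannot lie in $Z(SL(p,\mathbb{C}))\cdot D=D$. The only real subtlety is bookkeeping: matching the scalar $\xi^k$ appearing in the commutator relation with the correct power of $c$ realizing the eigenspace permutation, and checking that the cases $p=2$ and $p$ odd admit a uniform treatment once the conventions for $D(\xi)$ and $M_c$ from the introduction are taken into account.
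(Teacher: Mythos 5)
Your proof is correct and follows essentially the same strategy as the paper: lift to $SL(p,\mathbb{C})$, observe that centralizing $\overline{D(\xi)}$ modulo the center means $[g,D(\xi)]$ is a scalar, and use the fact that $D(\xi)$ has $p$ distinct eigenvalues on one-dimensional eigenspaces to pin down the monomial structure. The only cosmetic difference is how the monomial form is extracted: the paper first computes $[D(\xi),M_c^k]=\xi^kI_p$ and then "normalizes" by replacing $u$ with $uM_c^{-k}$, reducing to the ordinary centralizer of $D(\xi)$ in $SL(p,\mathbb{C})$ (which is $D$ since $D(\xi)$ has distinct eigenvalues), whereas you invoke Lemma~\ref{eigspace} directly to see that $g$ cyclically permutes the coordinate lines. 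These are two presentations of the same idea, and your determinant bookkeeping (sign of a $p$-cycle for $p$ odd, and the $\sqrt{-1}$-twisted definition of $M_c$ for $p=2$) and the check that $\overline{D}\cap\langle\overline{M_c}\rangle$ is trivial are both sound.
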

\begin{proof}
Assume $p$  odd. Let $U:=\pi^{-1}(Z_{PSL(p,\mathbb{C})}(\overline{D(\xi)}))$.  For $0\leq k\leq p-1$, we have  $[D(\xi),M_c^k]=\xi^k I_p$ by straightforward computations. It follows that $U$ contains both the group $D$ and the matrix $M_c$. Conversely, if $u\in U$ then $[D(\xi),u]=\xi^k I_p$ for some $k$ and therefore :
\begin{align*}
[D(\xi),uM_c^{-k}]&=D(\xi)uM_c^{-k}D(\xi)^{-1}M_c^ku^{-1}\\
&=D(\xi)uD(\xi)^{-1}\xi^{-k}u^{-1}\text{ since  $[D(\xi),M_c^{-k}]=\xi^{-k} I_p$}\\
&=\xi^{-k}[D(\xi),u]=I_p\text{.}\end{align*}

Hence, $u$ is the product of an element in the centralizer of $D(\xi)$ in $SL(p,\mathbb{C})$ (which is $D$, since $D(\xi)$ is diagonal with pairwise distinct eigenvalues) with some power of $M_c$. As a result,   $U=D\rtimes \langle M_c\rangle$ and projecting this equality in $PSL(p,\mathbb{C})$, we have $Z_{PSL(p,\mathbb{C})}(\overline{D(\xi)})=\overline{D}\rtimes \langle \overline{M_c}\rangle$. \end{proof}

When $p=2$, the proof is slightly different since there is a non-trivial intersection between $D$ and $\langle M_c\rangle$, however $\overline{D}$ and $\langle\overline{M_c}\rangle$ still  have a trivial intersection.

\begin{proposition}\label{centralconj}
Let $p$ be a prime number and $\overline{H}$ be a bad subgroup of $PSL(p,\mathbb{C})$. Then, there is a non-trivial subgroup  $\overline{K}$ of $\overline{D}$ such that $\overline{H}$ is conjugate to $\overline{K}\rtimes\langle \overline{M_c}\rangle$. 
\end{proposition}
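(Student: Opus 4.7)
The plan is to use Proposition \ref{central} and Lemma \ref{centrdxi} to confine $\overline{H}$ to $\overline{D}\rtimes\langle\overline{M_c}\rangle$, identify $\overline{K}$ as the diagonal slice $\overline{H}\cap\overline{D}$, and then absorb the complementary $\overline{M_c}$-coset by a further $\overline{D}$-conjugation.

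After applying Proposition \ref{central} I may assume $\overline{D(\xi)}\in Z_{PSL(p,\mathbb{C})}(\overline{H})$, hence $\overline{H}\subset Z_{PSL(p,\mathbb{C})}(\overline{D(\xi)})=\overline{D}\rtimes\langle\overline{M_c}\rangle$ by Lemma \ref{centrdxi}. Let $q$ denote the projection onto $\langle\overline{M_c}\rangle$ and set $\overline{K}:=\overline{H}\cap\overline{D}=\ker(q|_{\overline{H}})$. Irreducibility of $\overline{H}$ rules out $\overline{H}\subset\overline{D}$ (which lies in a Borel subgroup), so $q(\overline{H})$ is non-trivial; as $|\langle\overline{M_c}\rangle|=p$ is prime, $q|_{\overline{H}}$ is surjective. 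This yields an extension $1\to\overline{K}\to\overline{H}\to\langle\overline{M_c}\rangle\to 1$ that has yet to be split and normalized.

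To split it, I pick $\overline{h}\in\overline{H}$ with $q(\overline{h})=\overline{M_c}$, lift to $h=d_0 M_c\in SL(p,\mathbb{C})$, and write $\sigma$ for conjugation by $M_c$. Expanding $h^p=\bigl(\prod_{k=0}^{p-1}\sigma^k(d_0)\bigr)M_c^p$, the $i$-th diagonal entry of $\prod_k\sigma^k(d_0)$ equals $\prod_j (d_0)_{jj}=\det(d_0)=1$, and $M_c^p=I_p$, so $h^p=I_p$ and $\overline{h}$ has order $p$. To move $\overline{h}$ onto $\overline{M_c}$ I look for $\lambda\in D$ with $\lambda h\lambda^{-1}=M_c$; using $M_c\lambda^{-1}=\sigma(\lambda)^{-1}M_c$ this reduces to solving $d_0=\lambda^{-1}\sigma(\lambda)$. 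Entry by entry this is the triangular cascade $\lambda_{i-1}=(d_0)_i\lambda_i$, whose single compatibility constraint around the cycle is precisely $\det(d_0)=1$; fixing $\lambda_0$ and then rescaling by a suitable $p$-th root of $\det(\lambda)^{-1}$ lands $\lambda$ in $D$. Conjugating $\overline{H}$ by the resulting $\overline{\lambda}\in\overline{D}$ sends $\overline{h}$ to $\overline{M_c}$ while fixing $\overline{K}$ pointwise ($\overline{D}$ is abelian), so the conjugated group is $\overline{K}\cdot\langle\overline{M_c}\rangle=\overline{K}\rtimes\langle\overline{M_c}\rangle$ (the $\langle\overline{M_c}\rangle$-stability of $\overline{K}$ is automatic, since $\overline{K}$ was normal in $\overline{H}$).

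The final point is that $\overline{K}$ must be non-trivial: otherwise $\overline{H}$ would be conjugate to $\langle\overline{M_c}\rangle$, but $M_c$ has $p$ distinct eigenvalues, so any span of a non-empty proper subset of its eigenlines is $\langle\overline{M_c}\rangle$-invariant, violating irreducibility. The main obstacle is the splitting step above, an avatar of the vanishing of $H^1(\langle\overline{M_c}\rangle,D)$; in this setting it is very concrete because the norm map on $D\subset SL(p,\mathbb{C})$ is identically trivial.
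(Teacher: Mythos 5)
Your argument follows the paper's proof essentially step for step: reduce to $\overline{D}\rtimes\langle\overline{M_c}\rangle$ via Proposition \ref{central} and Lemma \ref{centrdxi}, observe $q(\overline{H})$ must be all of $\langle\overline{M_c}\rangle$, lift a preimage of $\overline{M_c}$ and solve the cascading entry-wise system (using $\det(d_0)=1$) to find a $\overline{D}$-conjugation moving it to $\overline{M_c}$ itself, and conclude $\overline{K}=\ker(q|_{\overline{H}})$ must be non-trivial by irreducibility. The extra observations ($h^p=I_p$, the $H^1$ interpretation) are correct but not needed; the core is the same.
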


\begin{proof}
We denote $q$ the natural projection   $\overline{D}\rtimes \langle \overline{M_c}\rangle\to \langle \overline{M_c}\rangle$. Combining Proposition \ref{central} and Lemma \ref{centrdxi}, $\overline{H}$ is conjugate to a subgroup of $\overline{D}\rtimes \langle \overline{M_c}\rangle$. We identify $\overline{H}$ to this subgroup of $\overline{D}\rtimes \langle \overline{M_c}\rangle$. 

If $q(\overline{H})$ were trivial then $\overline{H}$ would be contained in $\overline{D}$ which is not irreducible, therefore $q(\overline{H})$ is not trivial. Let $x$ be in $\pi^{-1}(\overline{H})$ such that $q(\overline{x})=\overline{M_c}$. Then :

$$x=\begin{pmatrix}\lambda_0&&\\&\ddots&\\&&\lambda_{p-1}\end{pmatrix}M_c\text{.} $$

If $s$ denotes a diagonal matrix where $s_{i,i}=\lambda_0^{-1}\dots\lambda_{i}^{-1}$ for $0\leq i\leq p-1$ then  $sxs^{-1}=M_c$. If $t:=(\det(s))^{1/p}s$ then $txt^{-1}=M_c$ and $t$ is diagonal of determinant $1$. Therefore, $\overline{H}$ is conjugate in $\overline{D}\rtimes \langle \overline{M_c}\rangle$ to  a subgroup $\overline{H'}$ of $\overline{D}\rtimes \langle \overline{M_c}\rangle$  which contains $\overline{M_c}$. Since $\overline{M_c}$ belongs to $\overline{H'}$, we have that $\overline{H'}=\Ker(q_{|\overline{H'}})\rtimes \langle \overline{M_c}\rangle$. Since $\overline{H'}$ needs to be irreducible, it is clear that $\Ker(q_{|\overline{H'}})$ cannot be trivial.   \end{proof}

 Similar to Lemma \ref{centrdxi} (although it is written in a different way since it will be used with Lemma \ref{freeMc} later),  we have :

\begin{lemma}\label{centrMc}
Let $p$ be a prime number and $g\in SL(p,\mathbb{C})$ such that  $[g,M_c]\in Z(SL(p,\mathbb{C}))$. Then, $g=P(M_c)D(\xi)^k$ where $P\in\mathbb{C}[X]$ and $k\geq 0$.
\end{lemma}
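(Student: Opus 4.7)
The plan is to absorb the central commutator using a suitable power of $D(\xi)$, and then identify the centralizer of $M_c$ with its polynomial algebra. Write $\zeta$ for the generator of $Z(SL(p,\mathbb{C}))$ (so $\zeta=\xi$ when $p$ is odd, and $\zeta=-1$ when $p=2$). The hypothesis $[g,M_c]\in Z(SL(p,\mathbb{C}))$ yields $[g,M_c]=\zeta^k I_p$ for some $k\geq 0$. The crucial auxiliary identity is
\[ [D(\xi),M_c]=\zeta I_p, \]
verified by a direct computation on the canonical basis $(e_i)_{i\in\mathbb{Z}/p}$ for $p$ odd (both $D(\xi)M_c e_i$ and $\xi\,M_cD(\xi)e_i$ equal $\xi^{i+1}e_{i+1}$), and by a one-line matrix product from the explicit formulae when $p=2$. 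An immediate induction then gives $[D(\xi)^k,M_c]=\zeta^k I_p$.

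Setting $h:=gD(\xi)^{-k}$, the two commutators $[g,M_c]$ and $[D(\xi)^{-k},M_c]$ are mutually inverse central elements, so a straightforward computation (equivalently, the commutator identity $[ab,c]=[a,c][b,c]$ when $[b,c]$ is central) shows that $[h,M_c]=I_p$, i.e.\ $h$ commutes with $M_c$. Since $p$ is prime, $M_c$ has $p$ pairwise distinct eigenvalues---namely the $p$-th roots of unity when $p$ is odd, and $\pm\sqrt{-1}$ when $p=2$---so it is diagonalisable with simple spectrum. Consequently its centralizer in $M_p(\mathbb{C})$ coincides with the commutative algebra $\mathbb{C}[M_c]$, and hence $h=P(M_c)$ for some $P\in\mathbb{C}[X]$. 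Right-multiplying by $D(\xi)^k$ gives the desired decomposition $g=P(M_c)D(\xi)^k$.

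No step involves any genuine obstacle; the only mild subtlety is handling $p=2$ in parallel, where the definitions of $D(\xi)$ and $M_c$ are slightly non-standard (in particular $M_c$ is not literally a permutation matrix but differs from one by a scalar), but the two ingredients actually used---the commutator relation $[D(\xi),M_c]=\zeta I_p$ and the simple-spectrum property of $M_c$---remain valid, and the argument proceeds identically.
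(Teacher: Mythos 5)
Your proof is correct and follows essentially the same route as the paper's: use the commutator relation with $D(\xi)$ to reduce to the case $[h,M_c]=I_p$, then invoke the simple spectrum of $M_c$ to identify its centralizer in $M_p(\mathbb{C})$ with $\mathbb{C}[M_c]$. The only minor difference is that you spell out the $p=2$ case and the induction $[D(\xi)^k,M_c]=\zeta^kI_p$ explicitly, whereas the paper handles $p$ odd and leaves $p=2$ to the reader.
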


\begin{proof}Assume $p$  odd. Since $M_c$ has pairwise distinct eigenvalues (like $D(\xi)$, its minimal polynomial is   $X^p-1$), any matrix  commuting with $M_c$ can be written as a polynomial in $M_c$. Like in Lemma \ref{centrdxi}, if $[g,M_c]=\xi^k I_p$ then $[gD(\xi)^{-k},M_c]=I_p$ and therefore, what is written above implies that there is a polynomial $P$ such that $gD(\xi)^{-k}=P(M_c)$ and we are done. 
 \end{proof}

\begin{lemma}\label{freeMc}

Let $p$ be  a prime number. Let $(d_0,\dots,d_{p-1})$ and $(d_0',\dots,d_{p-1}')$ be two $p$-tuples of complex diagonal matrices. Then :

$$\sum_{j=0}^{p-1}d_jM_c^j= \sum_{j=0}^{p-1}d_j'M_c^j\Rightarrow \forall \text{ }0\leq j\leq p-1\text{, } d_j=d_j'\text{.}$$

\end{lemma}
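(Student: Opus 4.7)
The plan is to reduce the statement, by linearity, to showing that $\sum_{j=0}^{p-1} d_j M_c^j = 0$ forces every $d_j$ to vanish, and then to observe that the various summands $d_j M_c^j$ occupy disjoint ``cyclic diagonals'' of the matrix and hence cannot cancel each other.

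More precisely, I would start by recalling that, with the convention that $M_c$ is the permutation matrix of the cycle $c=(0,1,\dots,p-1)$, one has $(M_c^j)_{k,i} = 1$ if $k \equiv i+j \pmod p$ and $0$ otherwise. Writing $d_j = \mathrm{diag}(\lambda_{j,0},\dots,\lambda_{j,p-1})$, a direct computation then gives
\[
(d_j M_c^j)_{k,i} \;=\; \lambda_{j,k}\,\delta_{k,\, i+j \bmod p}.
\]
Thus the nonzero entries of $d_j M_c^j$ lie on the single ``shifted diagonal'' $\{(k,i) : k-i \equiv j \pmod p\}$, and these $p$ shifted diagonals are pairwise disjoint and partition the set of matrix positions.

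Setting $d_j'' := d_j - d_j'$, the hypothesis becomes $\sum_{j=0}^{p-1} d_j'' M_c^j = 0$. Reading off the $(k,i)$ entry for fixed residue $j \equiv k - i \pmod p$, the only surviving term on the left is $\lambda_{j,k}''$, so $\lambda_{j,k}'' = 0$ for every $k$; hence $d_j'' = 0$, i.e.\ $d_j = d_j'$. (Equivalently, one can note that $M_c^j$ is invertible, so once all entries of $d_j'' M_c^j$ vanish one gets $d_j'' = 0$ directly.) For $p=2$ one must remember that here $M_c$ has been redefined with a factor of $\sqrt{-1}$; this only rescales the summands and does not affect the disjoint-support argument.

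There is essentially no serious obstacle: the lemma reduces to the combinatorial observation that multiplication by a diagonal matrix on the left preserves the support of a permutation matrix, and distinct powers of the cyclic permutation $c$ have disjoint supports. The only care needed is fixing the row/column convention for $M_c$ so that the identity $(d_j M_c^j)_{k,i} = \lambda_{j,k}\,\delta_{k,i+j}$ is correct; once that is in place the conclusion is immediate.
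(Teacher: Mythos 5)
Your proof is correct and follows essentially the same route as the paper's: identify the nonzero entries of $d_j M_c^j$ as lying on the shifted diagonal $\{(i,k): i-k \equiv j \pmod p\}$, note these diagonals are disjoint, and read off equality entry by entry. The only cosmetic differences are that you reduce to the homogeneous case by subtracting, whereas the paper compares entries of the two sums directly, and that you explicitly note the $p=2$ rescaling which the paper leaves to the reader.
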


\begin{proof}
Assume $p$  odd. For $0\leq j\leq p-1$, and for $0\leq i,k\leq p-1$, we have :

$$(d_jM_c^j)_{i,k}=\left\lbrace\begin{array}{ll} (d_j)_{i,i}&\text{ if $i-k=j$ mod $p$}\\0&\text{ else.}\end{array}\right.$$

As a result :

$$\left(\sum_{j=0}^{p-1}d_jM_c^j\right)_{i,k}=(d_{k-i\text{ mod } p})_{i,i}\text{.}$$

Applying this expression to both $(d_0,\dots,d_{p-1})$ and $(d_0',\dots,d_{p-1}')$, we easily see that the equality of the sum of matrices in the assumption implies the equalities $d_j=d_j'$ for $j=0,\dots,p-1$.  \end{proof}

\begin{proposition}\label{centrKMc}

Let $p$ be a prime number, $\overline{K}$ be a non-trivial subgroup of $\overline{D}$ which is stable by the conjugation action of $\overline{M_c}$. Then :

$$Z_{PSL(p,\mathbb{C})}(\overline{K}\rtimes\langle\overline{M_c}\rangle)=\left\lbrace\begin{array}{ll} \langle \overline{D(\xi)}\rangle \times\langle\overline{M_c}\rangle &\text{ if } \overline{K}=\langle \overline{D(\xi)}\rangle \\
\langle \overline{D(\xi)}\rangle &\text{ else.}\\
\end{array}\right.$$

\end{proposition}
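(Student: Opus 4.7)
The plan is to compute the centralizer as the intersection $Z_{PSL(p,\mathbb{C})}(\overline{M_c})\cap Z_{PSL(p,\mathbb{C})}(\overline{K})$, since $\overline{K}\rtimes\langle\overline{M_c}\rangle$ is generated by $\overline{K}$ and $\overline{M_c}$. First I would use Lemma \ref{centrMc} to parametrize lifts of elements of $Z_{PSL(p,\mathbb{C})}(\overline{M_c})$: any such lift $g\in SL(p,\mathbb{C})$ can be written $g=P(M_c)D(\xi)^k$ with $P\in\mathbb{C}[X]$ and $k\in\mathbb{Z}$, and one may assume $\deg P<p$ using $M_c^p=I_p$. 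Write $P(M_c)=\sum_{j=0}^{p-1}a_jM_c^j$.

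Next I would translate the condition that $\overline{g}$ centralizes $\overline{K}$ into a constraint on the coefficients $a_j$. For any lift $d\in\pi^{-1}(\overline{K})$, which may be chosen diagonal, the factor $D(\xi)^k$ commutes with $d$, so $[g,d]=[P(M_c),d]$. Setting $d^{(j)}:=M_c^jdM_c^{-j}$ (still diagonal, obtained from $d$ by cyclically shifting its entries), the identity $M_c^jd=d^{(j)}M_c^j$ gives
\[
P(M_c)\,d=\sum_{j=0}^{p-1}(a_jd^{(j)})M_c^j,\qquad dP(M_c)=\sum_{j=0}^{p-1}(a_jd)M_c^j.
\]
The condition $[P(M_c),d]=\lambda I_p\in Z(SL(p,\mathbb{C}))$ reads $P(M_c)d=\lambda dP(M_c)$, and Lemma \ref{freeMc} applied to both sides yields, for every index $j$ with $a_j\neq 0$, the equality $d^{(j)}=\lambda d$ of diagonal matrices.

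The key observation is that for $j\neq 0$ this equation constrains $d$ drastically. Writing it out, $d_{i-j\bmod p}=\lambda d_i$ for every $i$, and since $\gcd(j,p)=1$ the iteration $d_{i+kj}=\lambda^{-k}d_i$ covers all coordinates. Together with $d\in SL(p,\mathbb{C})$ and $\lambda\in\mu_p$, this forces $\overline{d}\in\langle\overline{D(\xi)}\rangle$. Hence if there exists $\overline{d}\in\overline{K}$ not lying in $\langle\overline{D(\xi)}\rangle$ (which is precisely the case $\overline{K}\neq\langle\overline{D(\xi)}\rangle$, since $\langle\overline{D(\xi)}\rangle$ has prime order $p$ and $\overline{K}$ is non-trivial), all $a_j$ with $j\neq 0$ must vanish, so $P(M_c)=a_0 I_p$ and $\overline{g}\in\langle\overline{D(\xi)}\rangle$. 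The reverse inclusion is immediate since $\overline{D(\xi)}$ commutes with every element of $\overline{D}$ (hence with $\overline{K}$) and commutes with $\overline{M_c}$ modulo the center by $[D(\xi),M_c]=\xi I_p$.

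In the remaining case $\overline{K}=\langle\overline{D(\xi)}\rangle$ I would add the requirement that $\overline{g}$ centralize $\overline{D(\xi)}$. From $M_c^jD(\xi)=\xi^{-j}D(\xi)M_c^j$ and Lemma \ref{freeMc} again, the relation $P(M_c)D(\xi)=\mu D(\xi)P(M_c)$ with $\mu\in\mu_p$ becomes $a_j\xi^{-j}=\mu a_j$ for every $j$; as the $\xi^{-j}$ are pairwise distinct, only one index $j_0$ yields $a_{j_0}\neq 0$, so $g$ is (up to a scalar) of the form $M_c^{j_0}D(\xi)^k$. This produces the centralizer $\langle\overline{D(\xi)}\rangle\cdot\langle\overline{M_c}\rangle$; commutation modulo center and trivial intersection (since $\overline{M_c}\notin\overline{D}$) show it is the direct product $\langle\overline{D(\xi)}\rangle\times\langle\overline{M_c}\rangle$. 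The main obstacle is the careful Fourier-type bookkeeping in the expansion $P(M_c)=\sum a_jM_c^j$, for which Lemma \ref{freeMc} is essential; the case $p=2$ requires minor adaptation because of the chosen square roots of $-1$ in the definitions of $D(\xi)$ and $M_c$, but the structure of the argument is identical.
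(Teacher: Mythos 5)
Your proposal is correct and takes essentially the same route as the paper: both parametrize lifts of elements centralizing $\overline{M_c}$ via Lemma \ref{centrMc} as $P(M_c)D(\xi)^k$, both compare coefficients via Lemma \ref{freeMc} to obtain the constraint for indices $j$ with $a_j\neq 0$, and both then split into the cases $\overline{K}\neq\langle\overline{D(\xi)}\rangle$ and $\overline{K}=\langle\overline{D(\xi)}\rangle$. The only cosmetic difference is that when $a_j\neq 0$ for some $j\neq 0$, the paper re-invokes Lemma \ref{centrMc} (a diagonal matrix commuting with $\overline{M_c}$ mod center must lie in $\langle\overline{D(\xi)}\rangle$) whereas you unwind $d^{(j)}=\lambda d$ entrywise using $\gcd(j,p)=1$; both are valid and amount to the same computation.
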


\begin{proof}  Since $\overline{D(\xi)}$ commutes with $\overline{D}\rtimes\langle\overline{M_c}\rangle$, $\langle \overline{D(\xi)}\rangle$ is, in any case, included in $Z_{PSL(p,\mathbb{C})}(\overline{K}\rtimes\langle\overline{M_c}\rangle)$.

Let  $\overline{x}$ be in $Z_{PSL(p,\mathbb{C})}(\overline{K}\rtimes\langle\overline{M_c}\rangle)$. By  Lemma \ref{centrMc}, there exist complex numbers $a_0$,\dots, $a_{p-1}$ and $0\leq k\leq p-1$ such that  $x=(a_0+a_1M_c+\cdots+a_{p-1}M_c^{p-1})D(\xi)^k$. Let   $\overline{d}$ be in $\overline{K}$. Then, there is $0\leq t\leq p-1$ such that  $dxd^{-1}=\xi^tx $. We also  have :

$$dxd^{-1}=\left(\sum_{j=0}^{p-1}a_jdM_c^jd^{-1} \right)D(\xi)^k=\left(\sum_{j=0}^{p-1}\underbrace{a_jdM_c^jd^{-1}M_c^{-j}}_{\text{ diagonal}}M_c^j \right)D(\xi)^k\text{.}$$

Therefore, we apply Lemma \ref{freeMc} to get $a_jdM_c^jd^{-1}M_c^{-j}=a_j\xi^t$ for $j=0,\dots,p-1$. Assume  $a_j$ is non-zero for some $j>0$, then  $[d,M_c^j]$ belongs to $Z(SL(p,\mathbb{C}))$ and therefore, $\overline{d}$ commutes with $\overline{M_c}^j$. Since $j\neq 0$ and $\overline{M_c}$ is of order $p$, $\overline{d}$ commutes with $\overline{M_c}$. Since $d$ is diagonal, Lemma \ref{centrMc} implies that $\overline{d}$ belongs to $\langle \overline{D(\xi)}\rangle$.

\bigskip

As a result, if $\overline{K}$ is not equal to $\langle \overline{D(\xi)}\rangle$ then $a_j=0$ for all $j>0$ and $\overline{x}$ belongs to $\langle \overline{D(\xi)}\rangle$. It follows that the centralizer of $\overline{K}\rtimes\langle\overline{M_c}\rangle$ in $PSL(p,\mathbb{C})$ is $\langle \overline{D(\xi)}\rangle$.

If $\overline{K}=\langle \overline{D(\xi)}\rangle$ then taking $d=D(\xi)$ above, we have $a_j[D(\xi),M_c^j]=a_j\xi^t$. Therefore $a_j(\xi^j-\xi^t)=0$. As a result, $a_j=0$ if $j\neq t$ and therefore $\overline{x}$ belongs to $ \langle \overline{D(\xi)}\rangle \times\langle\overline{M_c}\rangle $. We proved that the centralizer of  $ \langle \overline{D(\xi)}\rangle \times\langle\overline{M_c}\rangle $ is contained in $ \langle \overline{D(\xi)}\rangle \times\langle\overline{M_c}\rangle $, since this group is abelian, its centralizer in $PSL(p,\mathbb{C})$ is itself. \end{proof}

\begin{remark}\label{irred}   Let $\overline{K}$ be a non-trivial $\langle\overline{M_c}\rangle$-invariant subgroup of $\overline{D}$. Then $\overline{K} \rtimes\langle\overline{M_c}\rangle$ is completely reducible.  Since  they also have finite centralizers (using this proposition), Corollary 17 in \cite{Sik} implies that they are irreducible. \end{remark}

\begin{proof}[Theorem \ref{classifcentr}] It is a direct consequence of  Proposition \ref{centralconj} which states that  any bad  subgroup $\overline{H}$ is conjugate to  $\overline{K}\rtimes \langle\overline{M_c}\rangle$ where $\overline{K}$ is a non-trivial  subgroup of $\overline{D}$  and of Proposition \ref{centrKMc} which gives  their centralizer.   \end{proof}

Our next order of business is to describe   $\chi^i_{Sing}(\Gamma, PSL(p,\mathbb{C}))$ when $\Gamma$ is a finitely generated group. Denote $\Hom^i(\Gamma,\overline{D}\rtimes \langle \overline{M_c}\rangle)$ the set of irreducible representations from $\Gamma$ to $PSL(p,\mathbb{C})$ whose image is contained in $\overline{D}\rtimes \langle \overline{M_c}\rangle$. By definition, this set is  included in $\Hom^i(\Gamma,PSL(p,\mathbb{C})) $. We denote $\iota$ the inclusion. It induces   a map $\varphi$ on the character variety and we have  the following commutative diagram :
$$\xymatrix{\Hom^i(\Gamma,\overline{D}\rtimes \langle \overline{M_c}\rangle) \ar[rr]^{\iota}\ar[d]^{\textnormal{mod }\overline{D}\rtimes \langle \overline{M_c}\rangle}&&\Hom^i(\Gamma,PSL(p,\mathbb{C})) \ar[d]^{\textnormal{mod }PSL(p,\mathbb{C})}\\ \chi^i(\Gamma,\overline{D}\rtimes \langle \overline{M_c}\rangle)\ar[rr]^{\varphi} &&\chi^i(\Gamma,PSL(p,\mathbb{C}))}$$

From Theorem \ref{classifcentr}, we immediately deduce the following corollary :

\begin{corollary}\label{singular}

Let $p$ be a prime number and $\Gamma$ be a finitely generated group. Then  $\chi^i_{Sing}(\Gamma,PSL(p,\mathbb{C}))=\varphi(\chi^i(\Gamma,\overline{D}\rtimes \langle \overline{M_c}\rangle))$.

\end{corollary}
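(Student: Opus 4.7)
The plan is to establish both inclusions of the equality, each of which follows immediately from earlier results in this section.

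For the forward inclusion $\chi^i_{Sing}(\Gamma,PSL(p,\mathbb{C}))\subseteq\varphi(\chi^i(\Gamma,\overline{D}\rtimes\langle\overline{M_c}\rangle))$, I would take a conjugacy class $[\rho]$ of a bad irreducible representation $\rho:\Gamma\to PSL(p,\mathbb{C})$. Applying Theorem \ref{classifcentr} to the bad irreducible subgroup $\rho(\Gamma)$, there exists $g\in PSL(p,\mathbb{C})$ such that $g\rho(\Gamma)g^{-1}$ equals either $\langle\overline{D(\xi)}\rangle\times\langle\overline{M_c}\rangle$ or $\overline{K}\rtimes\langle\overline{M_c}\rangle$ for some non-trivial $\langle\overline{M_c}\rangle$-invariant subgroup $\overline{K}\subseteq\overline{D}$; in both cases the image sits inside $\overline{D}\rtimes\langle\overline{M_c}\rangle$. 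Thus $g\rho g^{-1}$ determines a class in $\chi^i(\Gamma,\overline{D}\rtimes\langle\overline{M_c}\rangle)$ whose image under $\varphi$ is $[\rho]$.

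For the reverse inclusion, I would take an irreducible representation $\rho':\Gamma\to\overline{D}\rtimes\langle\overline{M_c}\rangle$ and verify that $\iota\circ\rho'$ is bad as a representation into $PSL(p,\mathbb{C})$. The crucial observation is Lemma \ref{centrdxi}: since $\overline{D}\rtimes\langle\overline{M_c}\rangle$ equals $Z_{PSL(p,\mathbb{C})}(\overline{D(\xi)})$, every element of $\rho'(\Gamma)$ commutes with the non-trivial element $\overline{D(\xi)}$, so $\overline{D(\xi)}\in Z_{PSL(p,\mathbb{C})}(\rho'(\Gamma))\setminus Z(PSL(p,\mathbb{C}))$. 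This directly witnesses the fact that $\rho'$ is bad.

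No serious obstacle is anticipated: the corollary is essentially a repackaging of Theorem \ref{classifcentr} (supplying the forward direction via the explicit normal form for bad subgroups) combined with Lemma \ref{centrdxi} (supplying the reverse direction via the simple remark that the target subgroup is the full centralizer of a non-central element). The only care worth taking is that both badness and irreducibility are properties of the image, hence conjugation-invariant and compatible with the map $\iota$, so the argument descends cleanly from $\Hom^i$ to the character varieties.
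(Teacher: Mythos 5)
Your proof is correct and follows essentially the approach the paper has in mind (the paper gives no explicit proof, merely stating the corollary follows immediately from Theorem~\ref{classifcentr}). Your spelling out of the reverse inclusion via Lemma~\ref{centrdxi} — noting that $\overline{D}\rtimes\langle\overline{M_c}\rangle$ is the centralizer of $\overline{D(\xi)}$, so that any representation landing in it automatically has the non-central element $\overline{D(\xi)}$ in its centralizer — is the natural and cleanest way to justify the claim, and is consistent with the ingredients the paper has assembled.
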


  Therefore, to describe the singular locus, it suffices to describe $\chi^i(\Gamma,\overline{D}\rtimes \langle \overline{M_c}\rangle)$ (Section \ref{diagbyfin}) and the behavior of $\varphi$ (Sections \ref{secinj} and \ref{secint}).

\section{The character variety into a virtually abelian semidirect product}\label{diagbyfin}

This section will be devoted to the description of $\chi^i(\Gamma,\overline{D}\rtimes \langle \overline{M_c}\rangle)$. This requires  group cohomology and its computation in terms of cochains. Definitions and results needed for this paper are given in Appendix \ref{grocoh}. 

We recall that $\langle \overline{M_c}\rangle$ naturally acts on $\overline{D}$ by conjugation. Let $q$ be the natural projection of the semidirect product $\overline{D}\rtimes \langle \overline{M_c}\rangle$ onto $\langle \overline{M_c}\rangle$. 

Given a group morphism $\overline{\rho}$ in $\Hom(\Gamma,\langle \overline{M_c}\rangle)$, we remark that it  makes of $\overline{D}$ a multiplicative $\Gamma$-module since $\langle \overline{M_c}\rangle$ acts by conjugation on $\overline{D}$. When it is necessary to specify the action, we will denote $\overline{D}_{\overline{\rho}}$ this $\Gamma$-module.

\bigskip

For any $\overline{\rho}$ in $\Hom(\Gamma,\langle \overline{M_c}\rangle)$, define $\mathcal{H}_{\overline{\rho}}:=\{\rho\in \Hom(\Gamma,\overline{D}\rtimes \langle \overline{M_c}\rangle)\mid q\circ \rho=\overline{\rho}\}$. The representation variety is a disjoint union of such sets :

$$\Hom(\Gamma,\overline{D}\rtimes \langle \overline{M_c}\rangle)=\bigcup_{\overline{\rho}\in \Hom(\Gamma,\langle \overline{M_c}\rangle)}\mathcal{H}_{\overline{\rho}}\text{.}$$

Furthermore :

\begin{proposition}\label{Hrho}

Let $\Gamma$ be a finitely generated group, $p$ be a prime number and $\overline{\rho}$ be in $\Hom(\Gamma,\langle \overline{M_c}\rangle)$. Then the following map is a well-defined homeomorphism 

$$f:\left|\begin{array}{clc}Z^1(\Gamma,\overline{D}_{\overline{\rho}})&\longrightarrow& \mathcal{H}_{\overline{\rho}}\\u&\longmapsto& \left(\gamma\mapsto u(\gamma)\overline{\rho}(\gamma)\right)\end{array}\right.\text{.}$$

\end{proposition}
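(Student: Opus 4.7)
My plan is to unpack the statement into three independent checks: well-definedness of $f$, bijectivity, and bicontinuity.

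First, I would verify that $f$ lands in $\mathcal{H}_{\overline{\rho}}$. Given $u \in Z^1(\Gamma, \overline{D}_{\overline{\rho}})$, set $\rho(\gamma) := u(\gamma)\overline{\rho}(\gamma)$. Since $u(\gamma) \in \overline{D}$ and $\overline{\rho}(\gamma) \in \langle \overline{M_c}\rangle$, expanding the product
\[
\rho(\gamma_1)\rho(\gamma_2) = u(\gamma_1)\,\overline{\rho}(\gamma_1) u(\gamma_2)\overline{\rho}(\gamma_1)^{-1}\,\overline{\rho}(\gamma_1\gamma_2) = u(\gamma_1)\cdot\bigl(\gamma_1 \cdot u(\gamma_2)\bigr)\,\overline{\rho}(\gamma_1\gamma_2)
\]
inside $\overline{D}\rtimes \langle\overline{M_c}\rangle$ shows that $\rho$ is a homomorphism precisely when $u$ satisfies the multiplicative cocycle identity $u(\gamma_1\gamma_2) = u(\gamma_1)\cdot(\gamma_1\cdot u(\gamma_2))$ for the action induced by $\overline{\rho}$. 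This is exactly the defining condition of $Z^1(\Gamma, \overline{D}_{\overline{\rho}})$ (recalled in Appendix \ref{grocoh}); moreover $q\circ\rho = \overline{\rho}$ by construction, so $\rho \in \mathcal{H}_{\overline{\rho}}$.

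Next, I would produce the inverse explicitly. For $\rho \in \mathcal{H}_{\overline{\rho}}$, define $g(\rho)(\gamma) := \rho(\gamma)\overline{\rho}(\gamma)^{-1}$. Applying $q$ gives the trivial element, so $g(\rho)(\gamma) \in \ker q = \overline{D}$, and the homomorphism property for $\rho$ translates, by the same computation reversed, into the cocycle condition for $g(\rho)$. The relations $f\circ g = \mathrm{id}$ and $g\circ f = \mathrm{id}$ are immediate from the definitions, giving bijectivity.

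Finally, for the topology, I would fix a finite generating set $\gamma_1,\dots,\gamma_r$ of $\Gamma$. Both $\mathcal{H}_{\overline{\rho}}$ and $Z^1(\Gamma, \overline{D}_{\overline{\rho}})$ embed as closed subsets of $(\overline{D}\rtimes \langle\overline{M_c}\rangle)^r$ and $\overline{D}^r$ respectively via evaluation on generators, with their subspace topology from the transcendental topology on $PSL(p,\mathbb{C})$. Under these identifications, $f$ becomes the coordinatewise map $(u(\gamma_i))_i \mapsto (u(\gamma_i)\overline{\rho}(\gamma_i))_i$, which is polynomial in the entries, and its inverse is the coordinatewise multiplication by the fixed elements $\overline{\rho}(\gamma_i)^{-1}$, also polynomial. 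Hence $f$ and $f^{-1}$ are continuous, completing the homeomorphism statement. I do not foresee any real obstacle here; the only point requiring minor care is keeping track of the semidirect product action when translating between the cocycle identity and the homomorphism condition, and verifying that evaluation on generators actually gives a closed embedding (which uses that the finitely many relators of $\Gamma$ impose closed polynomial conditions).
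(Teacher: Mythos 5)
Your proposal is correct and follows essentially the same route as the paper: the same computation showing the homomorphism condition is equivalent to the cocycle identity, the same explicit inverse (the paper phrases it as the push-forward of the projection onto the $\overline{D}$-factor, which equals your $\rho(\gamma)\overline{\rho}(\gamma)^{-1}$), and the same observation that both maps are continuous. Your remark about evaluation on a finite generating set merely makes explicit what the paper leaves implicit.
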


\begin{proof}

Any group morphism  $\rho$ from $\Gamma$ to $\overline{D}\rtimes \langle \overline{M_c}\rangle  $ can be uniquely written as the product of a map $u_{\rho}:\Gamma\rightarrow \overline{D}$ and $q\circ \rho$. In particular, for any $\rho\in  \mathcal{H}_{\overline{\rho}}$ : $\rho(\cdot)=u_{\rho}(\cdot)\overline{\rho}(\cdot)$. 

It boils down to understand under which conditions on $u:\Gamma \rightarrow \overline{D}$ the map $\rho$ from $\Gamma$ to $\overline{D}\rtimes \langle \overline{M_c}\rangle$ defined as the product of $u$ and $\overline{\rho}$ is a group morphism. For two elements $\gamma,\gamma'$ in  $\Gamma$, we compute :
\begin{align*}
&\rho(\gamma\gamma')=u(\gamma\gamma')\overline{\rho}(\gamma\gamma')\\
\text{ and }&\rho(\gamma)\rho(\gamma')=u(\gamma)\overline{\rho}(\gamma)u(\gamma')\overline{\rho}(\gamma')=u(\gamma)\gamma\cdot  u(\gamma')\overline{\rho}(\gamma)\overline{\rho}(\gamma')\text{.}
\end{align*}

As a result, $\rho$ is a group morphism if and only if $u(\gamma\gamma')=u(\gamma) \gamma\cdot u(\gamma') $ for all $\gamma,\gamma'$ in  $\Gamma$, i.e. if and only if $u$ is a $1$-cocycle from  $\Gamma$ to $\overline{D}_{\overline{\rho}}$. It follows that :

$$\begin{array}{clc}Z^1(\Gamma,\overline{D}_{\overline{\rho}})&\longrightarrow& \mathcal{H}_{\overline{\rho}}\\u&\longmapsto& \left(\gamma\mapsto u(\gamma)\overline{\rho}(\gamma)\right)\end{array}$$

is a well-defined bijection. This map is   continuous and its inverse is the push-forward of the projection on the first factor for $\overline{D}\rtimes \langle \overline{M_c}\rangle$ which is, also continuous.  \end{proof}

There is a similar decomposition for the character variety. For $g\in \overline{D}\rtimes \langle \overline{M_c}\rangle$ and $\rho$ a representation from $\Gamma$ to $ \overline{D}\rtimes \langle \overline{M_c}\rangle$, $q\left(g\rho(\cdot) g^{-1}\right)=q(\rho(\cdot))$. As a result,   $\overline{\mathcal{H}_{\overline{\rho}}}:=\{[\rho]\in \chi(\Gamma,\overline{D}\rtimes \langle \overline{M_c}\rangle)\mid q\circ \rho=\overline{\rho}\}$ is well-defined and   :
 
 $$\chi(\Gamma,\overline{D}\rtimes \langle \overline{M_c}\rangle)=\bigcup_{\overline{\rho}\in \Hom(\Gamma,\langle \overline{M_c}\rangle)}\overline{\mathcal{H}_{\overline{\rho}}}\text{.}$$
 
Since the actions of  $\Gamma$ and $\langle \overline{M_c}\rangle$ on $\overline{D}$  commute, the conjugation action of $\langle \overline{M_c}\rangle$ on  $\overline{D}$ induces an action on $H^1(\Gamma,\overline{D}_{\overline{\rho}})$ (by acting on the coefficients). 
\begin{proposition}\label{barHrho}Let $\Gamma$ be a finitely generated group, $p$ be a prime number and $\overline{\rho}$ be in $\Hom(\Gamma,\langle \overline{M_c}\rangle)$. Then the following map is a well-defined homeomorphism  

$$\overline{f}:\left|\begin{array}{clc}H^1(\Gamma,\overline{D}_{\overline{\rho}})/\langle \overline{M_c}\rangle&\longrightarrow& \overline{\mathcal{H}_{\overline{\rho}}}\\ \left[u\right]\text{ mod }\langle \overline{M_c}\rangle&\longmapsto& \left[\gamma\mapsto u(\gamma)\overline{\rho}(\gamma)\right]\end{array}\right.\text{.}$$
\end{proposition}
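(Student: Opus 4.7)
The strategy is to transport the conjugation action of $\overline{D}\rtimes\langle\overline{M_c}\rangle$ on $\mathcal{H}_{\overline{\rho}}$ through the homeomorphism $f$ of Proposition \ref{Hrho} onto $Z^1(\Gamma,\overline{D}_{\overline{\rho}})$, identify its orbits, and then check that the induced map on the respective quotients is a homeomorphism.

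The first step is a direct computation. Writing any element of $\overline{D}\rtimes\langle\overline{M_c}\rangle$ as $g=dm$ with $d\in\overline{D}$ and $m\in\langle\overline{M_c}\rangle$, and $\rho\in\mathcal{H}_{\overline{\rho}}$ as $\rho(\gamma)=u(\gamma)\overline{\rho}(\gamma)$ via $f$, and using that $\langle\overline{M_c}\rangle$ is abelian so that $m$ commutes with $\overline{\rho}(\gamma)$, one obtains
\[
g\rho(\gamma)g^{-1}=(m\cdot u(\gamma))\cdot d\cdot(\gamma\cdot d)^{-1}\cdot\overline{\rho}(\gamma).
\]
Since $\overline{D}$ is abelian, the map $\gamma\mapsto d\cdot(\gamma\cdot d)^{-1}$ is a $1$-coboundary. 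Hence conjugation by $d\in\overline{D}$ alters a cocycle only by a coboundary, while conjugation by $m\in\langle\overline{M_c}\rangle$ acts by the coefficient action introduced just before the statement. Passing to $H^1$, the whole action reduces to the $\langle\overline{M_c}\rangle$-action on cohomology, so two cocycles $u,u'$ give conjugate representations if and only if $[u']=m\cdot[u]$ in $H^1(\Gamma,\overline{D}_{\overline{\rho}})$ for some $m$. This gives both well-definedness and bijectivity of $\overline{f}$.

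It remains to establish the homeomorphism. I would assemble the commutative square whose top edge is $f$, whose bottom edge is $\overline{f}$, and whose vertical edges are the natural quotient projections $Z^1\to H^1/\langle\overline{M_c}\rangle$ and $\mathcal{H}_{\overline{\rho}}\to\overline{\mathcal{H}_{\overline{\rho}}}$. The left projection factors as the linear quotient $Z^1\to Z^1/B^1=H^1$ followed by the quotient by the finite group $\langle\overline{M_c}\rangle$, both of which are open continuous surjections. The main technical obstacle is that the right projection comes a priori from the GIT quotient defining $\chi(\Gamma,\overline{D}\rtimes\langle\overline{M_c}\rangle)$, not from a set-theoretic quotient, so one needs to know that the conjugation orbits inside $\mathcal{H}_{\overline{\rho}}$ are closed. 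This is transparent through $f$: under the identification $\mathcal{H}_{\overline{\rho}}\cong Z^1$, any such orbit is a finite union of affine translates of the linear subspace $B^1$, namely the $\langle\overline{M_c}\rangle$-translates of a single $B^1$-coset, and is therefore Zariski closed. Once this is checked, the universal property of the quotient topology upgrades the continuous bijection $\overline{f}$ to a homeomorphism.
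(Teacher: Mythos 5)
Your proof is correct and follows the paper's approach: same computation of how conjugation transports through $f$ (conjugation by $dm$ sends the cocycle $u$ to $(m\cdot u)\cdot\delta_d$ with $\delta_d$ a coboundary), the same commutative square with $f$ on top and $\overline{f}$ below, and the same use of openness of the two vertical projections together with the universal property of quotient maps. The one genuine addition is that you explicitly justify that the projection $\mathcal{H}_{\overline{\rho}}\to\overline{\mathcal{H}_{\overline{\rho}}}$ is a topological quotient by checking that conjugation orbits in $\mathcal{H}_{\overline{\rho}}$ are closed (each orbit is a finite union of cosets of $B^1$ via $f$); the paper assumes this without comment, and the implicit justification is that every representation into $\overline{D}\rtimes\langle\overline{M_c}\rangle$ is completely reducible in that reductive group, hence has closed orbit. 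Your verification is valid and arguably cleaner since it is purely concrete. One small wording caveat: since $\overline{D}$ is a multiplicative torus, $Z^1(\Gamma,\overline{D}_{\overline{\rho}})$ and $B^1(\Gamma,\overline{D}_{\overline{\rho}})$ are not literal linear subspaces and the map $Z^1\to H^1$ is not a linear quotient; but $B^1$ is nonetheless a closed algebraic subgroup (the image of the homomorphism $d\mapsto\delta_d$ from $\overline{D}$, closed by Chevalley), so the conclusion you draw is unaffected.
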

\begin{proof}

Let $u_1$ and $u_2$ be in $Z^1(\Gamma,\overline{D}_{\overline{\rho}})$ and $\rho_i:=u_i\overline{\rho}$ be the corresponding element in $\mathcal{H}_{\overline{\rho}}$. Then $\rho_1$ is conjugate to $\rho_2$ in $\overline{D}\rtimes \langle \overline{M_c}\rangle$ if and only if there is $g=\overline{d}\text{ }\overline{M_c}^k$ in $\overline{D}\rtimes \langle \overline{M_c}\rangle$ such that $g\rho_1(\gamma)g^{-1}=\rho_2(\gamma)$ for all $\gamma\in \Gamma$. This is equivalent to $u_2(\gamma)=\overline{d}\gamma\cdot \overline{d}^{-1}\overline{M_c}^ku_1(\gamma)\overline{M_c}^{-k}$ for all $\gamma\in \Gamma$. Therefore, $\rho_1$ is conjugate to $\rho_2$ if and only if there exists $k$ such that $u_2\left(\overline{M_c}^k\cdot u_1\right)^{-1}$ belongs to $B^1(\Gamma,\overline{D}_{\overline{\rho}})$.

With this equivalence, we deduce that the bijection $f$ in proposition \ref{Hrho} induces the wanted bijection :

$$\overline{f}:\left|\begin{array}{clc}H^1(\Gamma,\overline{D}_{\overline{\rho}})/\langle \overline{M_c}\rangle&\longrightarrow& \overline{\mathcal{H}_{\overline{\rho}}}\\ \left[u\right]\text{ mod }\langle \overline{M_c}\rangle&\longmapsto& \left[\gamma\mapsto u(\gamma)\overline{\rho}(\gamma)\right]\end{array}\right.\text{.}$$

Remark that $\overline{f}$ makes the following diagram commute :

$$\xymatrix{Z^1(\Gamma,\overline{D}_{\overline{\rho}})\ar[rr]^f\ar[d]&&\mathcal{H}_{\overline{\rho}}\ar[d]\\
H^1(\Gamma,\overline{D}_{\overline{\rho}})/\langle \overline{M_c}\rangle\ar[rr]^{\overline{f}}&& \overline{\mathcal{H}_{\overline{\rho}}}
}$$

Since $f$ is continuous and the projection from $Z^1(\Gamma,\overline{D}_{\overline{\rho}})$ to  $H^1(\Gamma,\overline{D}_{\overline{\rho}})/\langle \overline{M_c}\rangle$ is open, it follows that $\overline{f}$ is continuous. Likewise, since $f^{-1}$ is continuous and the projection of $\mathcal{H}_{\overline{\rho}}$ onto $\overline{\mathcal{H}_{\overline{\rho}}}$ is open, it follows that $\overline{f}^{-1}$ is continuous. \end{proof}

Let $\overline{\rho}$ be a morphism from $\Gamma$ to $\langle \overline{M_c}\rangle$. We will need later a more explicit computation of $H^1(\Gamma,\overline{D}_{\overline{\rho}})$,  when   we fix particular examples of $\Gamma$.

\begin{proposition}\label{Transgr}

Let $\Gamma$ be a finitely generated group, $p$ be a prime number and $\overline{\rho}$ be a non-trivial morphism from $\Gamma$ to $\langle \overline{M_c}\rangle$. Let $\gamma_0\in \Gamma$ verifying $\overline{\rho}(\gamma_0)=\overline{M_c}$. Then the restriction map from $\Gamma$ to $K$ induces an homeomorphism between 
$$H^1(\Gamma,\overline{D}_{\overline{\rho}})\text{ and } \left\lbrace f\in \Hom(\Ker(\overline{\rho}),\overline{D})\left|\begin{array}{l}f(\gamma_0^p)=\overline{I_p}\\f(\gamma_0\gamma\gamma_0^{-1})=\overline{M_c}\cdot f(\gamma)\text{, } \forall \gamma\in K\end{array}\right. \right\rbrace\text{.}$$
\end{proposition}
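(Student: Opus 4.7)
The plan is to analyze the restriction map
$$r\colon Z^1(\Gamma,\overline{D}_{\overline{\rho}})\longrightarrow \Hom(K,\overline{D}),\qquad u\mapsto u|_{K},$$
noting that the target really is $\Hom(K,\overline{D})$ because $K=\Ker(\overline{\rho})$ acts trivially on $\overline{D}_{\overline{\rho}}$, so $1$-cocycles on $K$ are just homomorphisms. The strategy is to show $r$ descends to a bijection on $H^1$ onto the prescribed subset and that the bijection is bicontinuous.

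First I would verify that the image of $r$ satisfies both conditions. Equivariance comes from expanding $u(\gamma_0\gamma)$ in two ways: $u(\gamma_0\gamma)=u(\gamma_0)\cdot(\gamma_0\cdot u(\gamma))$ and $u(\gamma_0\gamma)=u\bigl((\gamma_0\gamma\gamma_0^{-1})\gamma_0\bigr)=f(\gamma_0\gamma\gamma_0^{-1})\cdot u(\gamma_0)$, using that $\gamma_0\gamma\gamma_0^{-1}\in K$ acts trivially; since $\overline{D}$ is abelian, cancelling $u(\gamma_0)$ yields $f(\gamma_0\gamma\gamma_0^{-1})=\overline{M_c}\cdot f(\gamma)$. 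For the condition $f(\gamma_0^p)=\overline{I_p}$, iterating the cocycle identity gives $u(\gamma_0^p)=\prod_{i=0}^{p-1}\overline{M_c}^{\,i}\!\cdot u(\gamma_0)$; representing $u(\gamma_0)$ by a diagonal matrix of determinant $1$, each diagonal entry of this product equals the product of all entries, which is $1$, so $u(\gamma_0^p)$ is trivial in $\overline{D}$.

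Next I would check that $r$ descends to $H^1$: a coboundary $u_d(\gamma)=(\gamma\cdot d)\cdot d^{-1}$ vanishes on $K$ because $K$ acts trivially. For injectivity modulo coboundaries, suppose $u|_K=\overline{I_p}$; since $\Gamma=\langle K,\gamma_0\rangle$, $u$ is determined by $a:=u(\gamma_0)$, and $u$ will be a coboundary as soon as one finds $d\in\overline{D}$ with $(\overline{M_c}\cdot d)\cdot d^{-1}=a$. This is an instance of Hilbert 90 for the cyclic action of $\overline{M_c}$ on $\overline{D}$: writing $a$ as a representative $\text{diag}(a_0,\dots,a_{p-1})$ with $\prod a_i=1$, one solves recursively $d_i=d_{i+1}a_{i+1}$ (indices mod $p$); consistency holds because $\prod a_i=1$, and the overall scale of $d$ can be fixed by a suitable $p$-th root so that $\det d=1$.

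For surjectivity I would start from $f$ in the stated set, pick any $a\in\overline{D}$, set $u(\gamma_0):=a$, and extend by
$$u(\gamma_0^i k):=\Bigl(\prod_{j=0}^{i-1}\overline{M_c}^{\,j}\cdot a\Bigr)\cdot\bigl(\overline{M_c}^{\,i}\cdot f(k)\bigr)\qquad (0\le i<p,\ k\in K).$$
The cocycle identity must be checked on products $(\gamma_0^{i_1}k_1)(\gamma_0^{i_2}k_2)=\gamma_0^{i_1+i_2}(\gamma_0^{-i_2}k_1\gamma_0^{i_2})k_2$. The equivariance condition on $f$ takes care of the case $i_1+i_2<p$, and when $i_1+i_2\ge p$ the relation $\gamma_0^{i_1+i_2}=\gamma_0^{i_1+i_2-p}\gamma_0^p$ reintroduces an $f(\gamma_0^p)$-factor which must be trivial — this is exactly what the second condition provides. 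The main technical obstacle is this bookkeeping in the surjectivity step, together with the Hilbert 90 computation of Step~3; once those are handled, continuity of $r$ and of its inverse follow because $\Gamma$ is finitely generated, so both $Z^1(\Gamma,\overline{D}_{\overline{\rho}})$ and $\Hom(K,\overline{D})$ carry subspace topologies from finite products of $\overline{D}$ in which restriction to and extension from generators are continuous.
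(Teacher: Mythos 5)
Your proposal is correct, and it takes a genuinely different route from the paper. The paper's proof is ``top-down'': it invokes the five-term inflation-restriction exact sequence for $1\to K\to\Gamma\to\langle\overline{M_c}\rangle\to 1$, shows that $H^1(\langle\overline{M_c}\rangle,\overline{D})$ vanishes by proving that $\Trace_{\overline{D}}$ is onto, concludes that $\Res$ is an isomorphism onto $\Ker(T)$, and then computes $\Ker(T)$ explicitly by feeding the formula for the transgression map from Dekimpe--Hart--Wauters into the definition of a $2$-coboundary --- which is precisely where the condition $f(\gamma_0^p)=\overline{I_p}$ appears. Your proof is ``bottom-up'': you work directly with cocycles, verifying that the restriction lands in the prescribed set (the equivariance from the cocycle identity, and $u(\gamma_0^p)=\Norme(u(\gamma_0))=\overline{I_p}$ because every coordinate of the norm is the product of all diagonal entries), observing that coboundaries die on $K$ since $K$ acts trivially, proving injectivity by solving $\Trace_{\overline D}(d)=a$ (your ``Hilbert 90'' computation, which is the same trace/norm calculation the paper does to kill $H^1(\langle\overline{M_c}\rangle,\overline D)$ but used in a different role), and proving surjectivity by an explicit extension whose cocycle identity in the case $i_1+i_2\geq p$ hinges on $f(\gamma_0^p)=\overline{I_p}$ exactly where the paper's proof hinges on $T(f)$ being a coboundary. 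The trade-off is clear: you avoid the spectral-sequence machinery and the explicit transgression formula at the cost of a more hands-on cocycle bookkeeping; either way the computational heart is the same pair of facts about the cyclic action of $\overline{M_c}$ on $\overline D$, namely $\Trace_{\overline D}$ onto and $\Norme_{\overline D}$ trivial. Two small remarks: your recursion $d_i=d_{i+1}a_{i+1}$ solves $\Trace(d)=a^{-1}$ rather than $\Trace(d)=a$ (an immaterial sign slip), and in the topology argument it is worth noting explicitly that $K$, as a finite-index subgroup of the finitely generated $\Gamma$, is itself finitely generated, so $\Hom(K,\overline{D})$ does indeed embed into a finite product of copies of $\overline{D}$.
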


\begin{proof}

To simplify the proof, we denote $K:=\Ker(\overline{\rho})$. We have an exact sequence of groups $\xymatrix{1\ar[r]& K\ar[r]& \Gamma\ar[r]^{\overline{\rho}}&\langle \overline{M_c}\rangle\ar[r]&1}$. Since $\Gamma$ acts on $\overline{D}$ via $\overline{\rho}$, we may write the first terms of the Inflation-Restriction sequence (see Proposition~\ref{InfRes}). 
$$\xymatrix{H^1(\langle \overline{M_c}\rangle,\overline{D}_{\overline{\rho}}^K)\ar[r]^{Inf}&H^1(\Gamma,\overline{D}_{\overline{\rho}})\ar[r]^{Res}&H^1(K,\overline{D}_{\overline{\rho}})^{\langle \overline{M_c}\rangle}\ar[r]^{T}&H^2(\langle \overline{M_c}\rangle,\overline{D}_{\overline{\rho}}^K) }\text{.}$$

The subgroup $K$ acts trivially on $\overline{D}$ so that $H^1(K,\overline{D}_{\overline{\rho}})=\Hom(K,\overline{D})$ by Lemma \ref{H1triv}.  The Inflation-Restriction sequence becomes :
\begin{equation}
\xymatrix{H^1(\langle \overline{M_c}\rangle,\overline{D})\ar[r]^{Inf}&H^1(\Gamma,\overline{D}_{\overline{\rho}})\ar[r]^{Res}&\Hom(K,\overline{D})^{\langle \overline{M_c}\rangle}\ar[r]^{T}&H^2(\langle \overline{M_c}\rangle,\overline{D}) }\text{.}
\label{InflaRestri}
\end{equation}

Applying Lemma \ref{H1cycl}, with $G$ being $\langle \overline{M_c}\rangle$, $g$ being $\overline{M_c}$ and $M:=\overline{D}$ :
\begin{equation}
H^1(\langle \overline{M_c}\rangle,\overline{D})=\frac{\Ker(\Norme_{\overline{D}})}{\im(\Trace_{\overline{D}})}
\label{H1cyclic}
\end{equation}

Let $b$ be a diagonal matrix with coefficients $b_0,\dots,b_{p-1}$ on its diagonal :
$$b(M_c\cdot b)^{-1}=  \begin{pmatrix}b_{0}b_{p-1}^{-1}&&\\&\ddots&\\&&b_{p-1}b_{p-2}^{-1}\end{pmatrix} \text{.}$$

Let $c\in D$ then define $b_{p-1-i}:=c_{0,0}\cdots c_{i ,i}$. Since $c_{0,0}\cdots c_{p-1,p-1}=\det(c)=1$, an induction on $i$ shows that $b_ib_{i-1}^{-1}=c_{i,i}$.  Multiplying  $b$ by  $\det(b)^{1/p}$ (this does not change the equation), we may even assume that $\det(b)=1$ and then   $\Trace_{\overline{D}}(\overline{b})=\overline{b}(\overline{M_c}\cdot\overline{b})^{-1}=\overline{c}$. 

Therefore,  $\im(\Trace_{\overline{D}})=\overline{D}$.   Equation \ref{H1cyclic} implies that $H^1(\langle \overline{M_c}\rangle,\overline{D})$ is trivial.  The exact sequence \ref{InflaRestri} implies that the restriction morphism $\Res$ is an isomorphism between $H^1(\Gamma,\overline{D}_{\overline{\rho}})$ and the kernel of the Transgression map $T$. Furthermore, the restriction map is clearly continuous and open on its image.

From Appendix \ref{grocoh}, we know that the action of $\langle \overline{M_c}\rangle$ over $\Hom(K,\overline{D})$ used to define $\Hom(K,\overline{D})^{\langle \overline{M_c}\rangle}$ in the Inflation-Restriction sequence is defined by :
$$
\left(\overline{M_c}\cdot f\right)(\gamma):=\overline{M_c}\cdot f(\gamma_0^{-1}\gamma\gamma_0) \text{, for $\gamma\in K$ and $f\in \Hom(K,\overline{D})$.}
$$
 
Finally, Paragraph 10.2 in \cite{D-H-W} contains an explicit formula to compute the Transgression map when the kernel acts trivially (their formula is written additively, here it is written multiplicatively). If $f\in \Hom(K,\overline{D})^{\langle \overline{M_c}\rangle}$ then $T(f)$ is the cohomology class of the following $2$-cocycle on $\langle \overline{M_c}\rangle$ :

\begin{equation}t(f)(\overline{M_c^k},\overline{M_c^l})=\left\lbrace
\begin{array}{cl}
  \overline{I_p}  &  \mbox{if $0\leq k,l\leq p-1$ and $k+l<p$}\\
  f(\gamma_0^p)^{-1}&\mbox{if $0\leq k,l\leq p-1$ and $k+l\geq p$}
\end{array}
\right.
\label{transgexpli}
\end{equation}

Assume that $f$ is in the kernel of the Transgression map then $t(f)$ is a $2$-coboundary in $B^2(\langle \overline{M_c}\rangle,\overline{D}_{\overline{\rho}})$ ; i.e.  there is a map $g:\langle \overline{M_c}\rangle\rightarrow \overline{D}$ such that for all $k,l$ in $\{0,\dots,p-1\}$ : $t(f)(\overline{M_c^k},\overline{M_c^l})=g(\overline{M_c^k})\overline{M_c^k}\cdot g(\overline{M_c^l}) g(\overline{M_c^{l+k}})^{-1}$. In particular, for $k=1$ and $l<p-1$ : $g(\overline{M^{l+1}})=g(\overline{M_c})\overline{M_c}\cdot g(\overline{M_c^l})$. An  induction shows that $g(\overline{M_c^{l}})=g(\overline{M_c})\dots\overline{M_c^{l-1}}\cdot g(\overline{M_c})$. For $k=1$ and $l=p-1$ :
$$f(\gamma_0^p)^{-1}=g(\overline{M_c})\overline{M_c}\cdot g(\overline{M_c^{p-1}})=\underbrace{g(\overline{M_c})\dots\overline{M_c^{p-1}}\cdot g(\overline{M_c})}_{\Norme_{\overline{D}}(g(\overline{M_c}))}=\overline{I_p} \text{.}
$$
Therefore, if $f$ is in $\Ker(T)$ then $f(\gamma_0^p)$ is trivial. Conversely, using Equation \ref{transgexpli}, if $f(\gamma_0^p)$ is trivial then $f$ is  in $\Ker(T)$. Combining this with the explicit definition of the action of $\langle \overline{M_c}\rangle$ on $\Hom(K,\overline{D})$ :

$$\Ker(T)= \left\lbrace f\in \Hom(K,\overline{D})\left|\begin{array}{l}f(\gamma_0^p)=\overline{I_p}\\f(\gamma_0\gamma\gamma_0^{-1})=\overline{M_c}\cdot f(\gamma)\text{, } \forall \gamma\in K\end{array}\right. \right\rbrace $$

Since we have already proved that the restriction morphism is an homeomorphism between $H^1(\Gamma,\overline{D}_{\overline{\rho}})$ and $\Ker(T)$, we are done. \end{proof}

\begin{remark}\label{Hirred} If $\overline{\rho}:\Gamma\to\langle \overline{M_c}\rangle$ is trivial, then any representation in $\mathcal{H}_{\overline{\rho}}$ has its image included in $\overline{D}$ and therefore cannot be irreducible. If $\overline{\rho}:\Gamma\to\langle \overline{M_c}\rangle$ is non-trivial then a representation $\rho$ in $\mathcal{H}_{\overline{\rho}}$ is irreducible if and only if it is not conjugate to $\langle\overline{M_c}\rangle$ (Remark \ref{irred}), if and only if $\Ker(q_{|\rho(\Gamma)})$ is not trivial. It follows that a representation $\rho$ belonging to $\mathcal{H}_{\overline{\rho}}$ is irreducible if and only if its corresponding $1$-cocycle (via the correspondence of proposition \ref{Hrho}) is not a $1$-coboundary. 
\end{remark}

\begin{remark}\label{decomposi} We denote $\mathcal{H}_{\overline{\rho}}^i$ the set of irreducible representations in $\mathcal{H}_{\overline{\rho}}$ and $\overline{\mathcal{H}_{\overline{\rho}}}^i$ their conjugacy classes up to conjugation by $\overline{D}\rtimes \langle \overline{M_c}\rangle$. We have :

\begin{equation}\chi^i_{Sing}(\Gamma,PSL(p,\mathbb{C}))=\bigcup_{\substack{\overline{\rho}\in \Hom(\Gamma,\langle \overline{M_c}\rangle)\\\overline{\rho}\textnormal{ non-trivial}}}\varphi(\overline{\mathcal{H}_{\overline{\rho}}}^{i})
\label{decompbadlocus}
\end{equation}
\end{remark}

In the next section we will prove that $\varphi$ restricted to $\overline{\mathcal{H}_{\overline{\rho}}}^{i}$ is an homeomorphism onto its image. Therefore,  Propositions \ref{barHrho} and \ref{Transgr} will eventually give a topological understanding of the singular locus. 

\section{A domain of injectivity for $\varphi$}\label{secinj}

\homeobarHrho*

We first need to prove :

\begin{proposition}\label{injectivity}

Let $\Gamma$ be a finitely generated group,  $p$ be a prime number and $\overline{\rho}$ be a non-trivial morphism in $\Hom(\Gamma,\langle \overline{M_c}\rangle)$ and $\rho,\rho'$ be two irreducible representations in $\mathcal{H}_{\overline{\rho}}$. If there exists $g\in PSL(p,\mathbb{C})$ such that $g\cdot \rho=\rho'$ then $g\in \overline{D}\rtimes\langle\overline{M_c}\rangle$. 

\end{proposition}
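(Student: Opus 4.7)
First I would fix $\gamma_0\in\Gamma$ with $\overline{\rho}(\gamma_0)=\overline{M_c}$ (available because $\overline{\rho}$ is non-trivial into a cyclic group of prime order) and reduce the statement to the special case $\rho(\gamma_0)=\rho'(\gamma_0)=\overline{M_c}$. Both $\rho(\gamma_0)$ and $\rho'(\gamma_0)$ have the form $\overline{d}\,\overline{M_c}$ with $\overline{d}\in\overline{D}$, so the diagonal conjugation trick used in the proof of Proposition \ref{centralconj} produces $\overline{t},\overline{t'}\in\overline{D}$ conjugating $\rho(\gamma_0)$ and $\rho'(\gamma_0)$ respectively to $\overline{M_c}$. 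After replacing $\rho,\rho'$ by their conjugates under $\overline{t},\overline{t'}$ and $g$ by $h:=\overline{t'}g\overline{t}^{-1}$, it is enough to show $h\in\overline{D}\rtimes\langle\overline{M_c}\rangle$, since $\overline{t},\overline{t'}$ already lie in $\overline{D}\subseteq \overline{D}\rtimes\langle\overline{M_c}\rangle$.

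After this reduction, evaluating $h\rho h^{-1}=\rho'$ at $\gamma_0$ gives $h\overline{M_c}h^{-1}=\overline{M_c}$, so $h$ centralizes $\overline{M_c}$. Applying Lemma \ref{centrMc} to any lift of $h$ in $SL(p,\mathbb{C})$, one can write $\tilde h = P(M_c)D(\xi)^k$ for some polynomial $P\in\mathbb{C}[X]$ and some integer $k$. The plan is then to show that $P(M_c)$ reduces to a non-zero scalar multiple of a single power $M_c^{j_0}$, for this immediately puts $h$ into $\langle\overline{M_c}\rangle\cdot\langle\overline{D(\xi)}\rangle\subseteq \overline{D}\rtimes\langle\overline{M_c}\rangle$.

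To cut $P$ down I would use the kernel $K:=\Ker(\overline{\rho})$. Since $\rho$ is irreducible, Remark \ref{Hirred} (together with the discussion of Section \ref{badsub}) guarantees that $\rho(K)=\rho(\Gamma)\cap\overline{D}$ is non-trivial. Choose $\gamma\in K$ with $\rho(\gamma)=\overline{d}$ and $d\in D$ non-scalar. Then $h\overline{d}h^{-1}=\rho'(\gamma)\in\overline{D}$, which at the level of $SL(p,\mathbb{C})$ says that $P(M_c)\,d\,P(M_c)^{-1}$ is diagonal up to the center, i.e.\ $P(M_c)\,d = D''\,P(M_c)$ for some diagonal $D''$. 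Expanding $P(M_c)\,d=\sum_j a_j(M_c^j d M_c^{-j})M_c^j$ and comparing with $D''P(M_c)=\sum_j a_j D'' M_c^j$ via the uniqueness statement of Lemma \ref{freeMc}, I obtain $a_j\,M_c^j d M_c^{-j}=a_j D''$ for every $j$. Consequently, for any two indices $j_1,j_2$ with $a_{j_1},a_{j_2}\neq 0$, the power $M_c^{j_1-j_2}$ commutes with $d$; since $d$ is non-scalar and $p$ is prime, this forces $j_1\equiv j_2\pmod p$. Thus $P(M_c)=a\,M_c^{j_0}$ for a unique $j_0$, hence $\tilde h\in \mathbb{C}^*\cdot M_c^{j_0}D(\xi)^k$, and the conclusion follows.

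The main obstacle is step three: squeezing enough information about $P$ out of a single intertwining constraint. A priori $\rho(K)$ could be a very small subgroup of $\overline{D}$, so one cannot appeal to a generic/regular element in it; what one really needs is (a) that irreducibility of $\rho$ produces at least one non-scalar element in $\rho(K)$, and (b) that the primality of $p$ forbids any non-trivial power of $M_c$ from centralizing such an element. Both ingredients are delicate but already available in the paper, which is why the argument goes through cleanly.
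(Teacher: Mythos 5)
Your proof is correct and follows essentially the same strategy as the paper's: conjugate both $\rho$ and $\rho'$ by elements of $\overline D$ so that $\gamma_0$ maps to $\overline{M_c}$, apply Lemma~\ref{centrMc} to the resulting element centralizing $\overline{M_c}$, pick a non-scalar element of $\rho(K)$ (available by irreducibility via Remark~\ref{Hirred}), and use Lemma~\ref{freeMc} together with the primality of $p$ to deduce that only a single power of $M_c$ can appear. The only cosmetic difference is that you observe the $D(\xi)^k$ factor drops out when conjugating a diagonal matrix, while the paper carries it along and then notes it commutes with the relevant diagonal lift; the decisive step --- that a non-trivial power of $M_c$ cannot commute with a non-scalar diagonal matrix when $p$ is prime --- is identical in both arguments.
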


\begin{proof}

Let $\gamma_0\in \Gamma$ such that $\overline{\rho}(\gamma_0)=M_c$. Since  $\rho,\rho'$ are in $\mathcal{H}_{\overline{\rho}}$ there exists $d_0,d_0',\dots,d_{p-1},d_{p-1}'$ such that :

$$\rho(\gamma_0)=\overline{\begin{pmatrix}d_0&&\\&\ddots&\\&&d_{p-1}\end{pmatrix}}\overline{M_c}\textnormal{ and } \rho'(\gamma_0)=\overline{\begin{pmatrix}d_0'&&\\&\ddots&\\&&d_{p-1}'\end{pmatrix}}\overline{M_c}\text{.} $$

Like in proposition \ref{centralconj}, there are  $g_1$ and $g_2\in \overline{D}$ verifying   $g_1\rho(\gamma_0)g_1^{-1}=\overline{M_c}$ and $g_2\rho'(\gamma_0)g_2^{-1}=\overline{M_c}$. Since $g\cdot \rho=\rho'$,   $\overline{M_c}=(g_1^{-1}g_2g)\overline{M_c}(g_1^{-1}g_2g)^{-1}$. 
 
 Let us denote $\overline{h}:=g_1^{-1}g_2g$. We have just seen that $\overline{h}$ centralizes $\overline{M_c}$. Lemma \ref{centrMc}  implies that there are  $a_0,\dots, a_{p-1}\in\mathbb{C}$ and an integer $s\geq 0$ such that :
$$h=\sum_{k=0}^{p-1}a_kM_c^kD(\xi)^s\text{.}$$
Since $g_1\cdot\rho$ is irreducible, there is $\gamma\in \Gamma$ such that  $g_1\cdot\rho(\gamma)$  is diagonal and not trivial. Remark that $\gamma$ belongs to $\Ker(\overline{\rho})$ and therefore $g_2\cdot\rho'(\gamma)$ is  diagonal as well. Let $h_1,h_2\in D$ verifying  $\overline{h_1}=g_1\cdot\rho(\gamma)$ and $\overline{h_2}=g_2\cdot\rho(\gamma)$. By definition of $h$, there is $l\geq 0$ verifying  $hh_1h^{-1}= \xi^lh_2$. Hence :
$$\sum_{k=0}^{p-1}a_kM_c^kD(\xi)^sh_1=
\xi^l\sum_{k=0}^{p-1}a_kh_2M_c^kD(\xi)^s\text{.}$$

Applying Lemma \ref{freeMc},  we have $a_kM_c^kD(\xi)^sh_1=a_kh_2M_c^kD(\xi)^s$,  for $0\leq k\leq p-1$. Since $D(\xi)^s$ commutes with $h_1$ : $a_k(M_c^kh_1-\xi^lh_2M_c^k)=0$  holds for $0\leq k\leq p-1$. Assume there are $0\leq k_1<k_2\leq p-1$ verifying  $a_{k_1}\neq 0$ and $a_{k_2}\neq 0$. Then $h_1=\xi^lM_c^{-k_1}h_2M_c^{k_1}$ and $\xi^lh_2=M_c^{k_2}h_1M_c^{-k_2}$ and therefore :
$$h_1=M_c^{k_2-k_1}h_1M_c^{k_1-k_2}\text{.} $$

Since $M_c^{k_2-k_1}=M_{c^{k_2-k_1}}$ and $c^{k_2-k_1}$ is a cyclic permutation of order $p$, the equation above implies that $h_1=g_1\cdot\rho(\gamma)$ is trivial, which is a contradiction.   As a result, there  is a unique $k$ such that $a_k\neq 0$ and   $\overline{h}$ belongs to  $\overline{D}\rtimes\langle\overline{M_c} \rangle$.  Since $g=g_2^{-1}g_1\overline{h}$, $g$ also belongs to  $\overline{D}\rtimes\langle\overline{M_c} \rangle$. \end{proof}

To finish the proof of Theorem \ref{homeobarHrho}, we need to deal with the topology :
\begin{proof}[Theorem \ref{homeobarHrho}]

Proposition \ref{injectivity} proves that $\varphi_{|\overline{\mathcal{H}_{\overline{\rho}}}^i}$  is injective.  We recall the diagram 

$$\xymatrix{\Hom^i(\Gamma,\overline{D}\rtimes \langle \overline{M_c}\rangle) \ar[rr]^{\iota}\ar[d]^{\textnormal{mod }\overline{D}\rtimes \langle \overline{M_c}\rangle=\psi_1}&&\Hom^i(\Gamma,PSL(p,\mathbb{C})) \ar[d]^{\textnormal{mod }PSL(p,\mathbb{C})=\psi_2}\\ \chi^i(\Gamma,\overline{D}\rtimes \langle \overline{M_c}\rangle)\ar[rr]^{\varphi} &&\chi^i(\Gamma,PSL(p,\mathbb{C}))}$$

We will denote  $\psi_1$ the projection mod $\overline{D}\rtimes \langle \overline{M_c}\rangle$ and   $\psi_2$ the projection mod $PSL(p,\mathbb{C})$. Since they are  projections by a topological group action, $\psi_1$ and $\psi_2$ are both continuous and open. Since $\iota$ is induced by the inclusion of $\overline{D}\rtimes \langle \overline{M_c}\rangle$ into $PSL(p,\mathbb{C})$ which is a homeomorphism onto its image, $\iota$ is also a homeomorphism onto its image and $\im(\iota)$ is a closed subset of $PSL(p,\mathbb{C})$.    Since $\psi_2$ is continuous and $\psi_1$ is open, $\varphi$ is continuous and, in particular,  $\varphi_{|\overline{\mathcal{H}_{\overline{\rho}}}^i}$ is continuous. Our next order   of business is to show that $\varphi_{|\overline{\mathcal{H}_{\overline{\rho}}}^i}$ is open in its image.  We restrict the diagram to  $\mathcal{H}_{\overline{\rho}}^i$ :

$$\xymatrix{
\mathcal{H}_{\overline{\rho}}^i\ar[rrrr]^{\iota_{|\mathcal{H}_{\overline{\rho}}^i}} \ar[d]_{\psi_{1|\mathcal{H}_{\overline{\rho}}^i}}  &&&&\Hom^i(\Gamma,PSL(p,\mathbb{C})) \ar[d]_{\psi_2}\\  \overline{\mathcal{H}_{\overline{\rho}}}^i\ar[rrrr]^{\varphi_{|\overline{\mathcal{H}_{\overline{\rho}}}^i}} &&&&\chi^i(\Gamma,PSL(p,\mathbb{C}))
    }$$

Since each $\mathcal{H}_{\overline{\rho}}^i$ is closed and $\Hom^i(\Gamma,\overline{D}\rtimes \langle \overline{M_c}\rangle)$ is a finite disjoint union of such $\mathcal{H}_{\overline{\rho}}^i$'s, each $\mathcal{H}_{\overline{\rho}}^i$ is open in $\Hom^i(\Gamma,\overline{D}\rtimes \langle \overline{M_c}\rangle)$. It follows that $\psi_1$ restricted to $\mathcal{H}_{\overline{\rho}}^i$ is still continuous and open and $\iota$ restricted to $\mathcal{H}_{\overline{\rho}}^i$ is still an homeomorphism onto its image. The group  $PSL(p,\mathbb{C})$ acts properly on $\Hom^i(\Gamma,PSL(p,\mathbb{C}))$ (Proposition 1.1 in \cite{J-M}), i.e. the following function is proper :

\begin{displaymath}
\zeta :
\left|
  \begin{array}{rcl}
PSL(p,\mathbb{C})\times \Hom^i(\Gamma,PSL(p,\mathbb{C}))&\longrightarrow& \Hom^i(\Gamma,PSL(p,\mathbb{C}))^2\\
(g,\rho)&\longmapsto&  (g\cdot \rho,\rho)  \\
  \end{array}
\right.\text{.}
\end{displaymath}

Since both $PSL(p,\mathbb{C})$ and $\Hom^i(\Gamma,PSL(p,\mathbb{C}))$ are locally compact and Hausdorff,  the function $\zeta$ is   closed. For any   closed set $F$ in $\Hom^i(\Gamma,PSL(p,\mathbb{C}))$, the set $\zeta(PSL(p,\mathbb{C})\times F)$ is closed. Therefore,  its projection  on the first coordinate $PSL(p,\mathbb{C})\cdot F$ is closed in  $\Hom^i(\Gamma,PSL(p,\mathbb{C}))$.

Let $U$  be an open subset of $\overline{\mathcal{H}_{\overline{\rho}}}^i$. We want to prove that $\varphi(U)$ is open in $\varphi(\overline{\mathcal{H}_{\overline{\rho}}}^i)$. Denote $U_0:=(\psi_{1|\mathcal{H}_{\overline{\rho}}^i})^{-1}(U)$ which is open by continuity of $\psi_1$. Then  $\iota(U_0)$ is open in $\iota(\mathcal{H}_{\overline{\rho}}^i)$, hence there exists an open set $V$ in $\Hom^i(\Gamma,PSL(p,\mathbb{C}))$ such that $V\cap \iota(\mathcal{H}_{\overline{\rho}}^i)=\iota(U_0) $.

Let $F:=V^c\cap \iota(\mathcal{H}_{\overline{\rho}}^i)$. Since $V^c$ is closed and $\iota(\mathcal{H}_{\overline{\rho}}^i)$ is closed,   $F$ is closed, and using properness as explained above, its saturation  $PSL(p,\mathbb{C})\cdot F$ is   closed. Let $V_0$  be $V-PSL(p,\mathbb{C})\cdot F$ then $V_0$ is open. 

\bigskip

By definition, $V_0\cap\iota(\mathcal{H}_{\overline{\rho}}^i)$ is contained in  $V\cap \iota(\mathcal{H}_{\overline{\rho}}^i)$.  Conversely, let $\rho$ be in $\iota(U_0)= V\cap \iota(\mathcal{H}_{\overline{\rho}}^i)$, we want to show that $\rho$ is not conjugate to an element of $F$ (this will imply that $\rho$ belongs to $V_0\cap\iota(\mathcal{H}_{\overline{\rho}}^i)$) and we do it by contradiction. Assume that $\rho\in PSL(p,\mathbb{C})\cdot F$, i.e. there is $g\in PSL(p,\mathbb{C})$ such that $g\cdot \rho\in F$. Since $\rho$ and $g\cdot \rho$ are both irreducible and valued in $ \overline{D}\rtimes\langle\overline{M_c}\rangle$ by definition, Proposition \ref{injectivity} implies that $g$ belongs to $\overline{D}\rtimes \langle \overline{M_c}\rangle$. Since $\rho$ belongs to $\iota(U_0)$ and $g\cdot\rho$ does not,   $U_0$ is not stable by the action of $ \overline{D}\rtimes\langle\overline{M_c}\rangle$, which is in contradiction with the very definition of $U_0$. Therefore $V_0\cap\iota(\mathcal{H}_{\overline{\rho}}^i)=V\cap \iota(\mathcal{H}_{\overline{\rho}}^i)$. Since $\psi_2\circ\iota=\varphi\circ\psi_1$, we have  $\varphi(U)=\psi_2(\iota(U_0))=\psi_2(V\cap  \iota(\mathcal{H}_{\overline{\rho}}^i))$. Therefore $\varphi(U)=\psi_2(V_0\cap  \iota(\mathcal{H}_{\overline{\rho}}^i))$. 

\bigskip

 It remains to show that $\psi_2(V_0\cap  \iota(\mathcal{H}_{\overline{\rho}}^i))=\psi_2(V_0)\cap \psi_2( \iota(\mathcal{H}_{\overline{\rho}}^i))$. The left-hand side is clearly contained in the right-hand side. Since $V_0$ is stable (by definition) by the action of $PSL(p,\mathbb{C})$, we have the other inclusion and therefore, we have the equality. 

\bigskip

Finally  $\varphi(U)$ is the intersection of $\psi_2(V_0)$ which is open (since $\psi_2$ is open)  and $\psi_2( \iota(\mathcal{H}_{\overline{\rho}}^i))=\varphi(\overline{\mathcal{H}_{\overline{\rho}}}^i)$. Therefore $\varphi(U)$ is open in $\varphi(\overline{\mathcal{H}_{\overline{\rho}}}^i)$. The function $\varphi_{\overline{\mathcal{H}_{\overline{\rho}}}^i}$ is continuous and open in its image, since  it is also injective, it is an homeomorphism onto its image.  \end{proof}

As a result, the singular locus of a character variety in $PSL(p,\mathbb{C})$ is a finite union of topological spaces, namely the $\varphi(\overline{\mathcal{H}_{\overline{\rho}}}^i)$'s,  whose topology is given by Propositions \ref{barHrho} and \ref{Transgr}. Before studying the intersections between these spaces, we remark that some of them may be equal to each other.

\begin{lemma}\label{identifvarphiHrho}

Let $\Gamma$ be a finitely generated group, $p$ be a prime number and $\overline{\rho},\overline{\rho}'$ be two non-trivial elements in $\Hom(\Gamma,\langle\overline{M_c}\rangle)$. Then,   $\Ker(\overline{\rho})=\Ker(\overline{\rho}')$ if and only if there is $\phi\in\Aut(\langle\overline{M_c}\rangle)$ verifying       $\overline{\rho}'=\phi\circ\overline{\rho}$. When it is true, $\varphi(\overline{\mathcal{H}_{\overline{\rho}}}^i)=\varphi(\overline{\mathcal{H}_{\overline{\rho}'}}^i)$.

\end{lemma}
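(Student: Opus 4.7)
The plan divides naturally into the two claims.

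For the equivalence, I rely on the fact that $\langle \overline{M_c}\rangle\cong\mathbb{Z}/p$ has prime order, so every non-trivial morphism $\overline{\rho}:\Gamma\to\langle\overline{M_c}\rangle$ is surjective and factors as an isomorphism $\Gamma/\Ker(\overline{\rho})\xrightarrow{\sim}\langle\overline{M_c}\rangle$. The direction $\overline{\rho}'=\phi\circ\overline{\rho}\Rightarrow\Ker(\overline{\rho}')=\Ker(\overline{\rho})$ is immediate since $\phi$ is injective. Conversely, given $\Ker(\overline{\rho})=\Ker(\overline{\rho}')$, both maps induce isomorphisms out of the common quotient, so $\phi:=\overline{\rho}'\circ\overline{\rho}^{-1}$ (well-defined on the quotient and hence on the image) is an automorphism of $\langle\overline{M_c}\rangle$ with $\overline{\rho}'=\phi\circ\overline{\rho}$.

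For the equality $\varphi(\overline{\mathcal{H}_{\overline{\rho}}}^i)=\varphi(\overline{\mathcal{H}_{\overline{\rho}'}}^i)$, the key step is to realize the abstract automorphism $\phi$ by an inner automorphism of $PSL(p,\mathbb{C})$ that preserves $\overline{D}\rtimes\langle\overline{M_c}\rangle$. Write $\phi(\overline{M_c})=\overline{M_c^k}$ with $k$ coprime to $p$, and let $\sigma$ be the bijection of $\mathbb{Z}/p$ given by $\sigma(i)=ki$. I would then check by direct computation that $\sigma c\sigma^{-1}=c^k$, so that the permutation matrix $M_\sigma$ satisfies $M_\sigma M_c M_\sigma^{-1}=M_c^k$, while conjugation by $M_\sigma$ also preserves the diagonal subgroup $D$ (it merely permutes diagonal entries). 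Thus $g:=\overline{M_\sigma}\in PSL(p,\mathbb{C})$ normalizes $\overline{D}\rtimes\langle\overline{M_c}\rangle$ and, through the projection $q$, induces the automorphism $\phi$ on the quotient factor $\langle\overline{M_c}\rangle$.

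With $g$ in hand, for any $\rho\in\mathcal{H}_{\overline{\rho}}^i$ the conjugate $g\cdot\rho:=g\rho g^{-1}$ still takes values in $\overline{D}\rtimes\langle\overline{M_c}\rangle$, is still irreducible, and satisfies $q\circ(g\cdot\rho)=\phi\circ q\circ\rho=\phi\circ\overline{\rho}=\overline{\rho}'$, so $g\cdot\rho\in\mathcal{H}_{\overline{\rho}'}^i$. Since $\rho$ and $g\rho g^{-1}$ are $PSL(p,\mathbb{C})$-conjugate, they project to the same point of $\chi^i(\Gamma,PSL(p,\mathbb{C}))$, and hence $\varphi(\overline{\mathcal{H}_{\overline{\rho}}}^i)\subseteq\varphi(\overline{\mathcal{H}_{\overline{\rho}'}}^i)$. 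Exchanging the roles of $\overline{\rho}$ and $\overline{\rho}'$ (using $\phi^{-1}$) gives the reverse inclusion.

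The main obstacle is simply finding the conjugating element $g$: once the permutation $\sigma(i)=ki$ is identified, the verification $M_\sigma M_c M_\sigma^{-1}=M_c^k$ is a short calculation on the canonical basis, and everything else follows by routine manipulations with the commutative diagram relating $\iota$, $\psi_1$, $\psi_2$ and $\varphi$.
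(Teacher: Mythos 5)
Your proof is correct and follows essentially the same approach as the paper: reduce the equivalence to the standard fact about surjections onto a group of prime order, then realize the automorphism $\phi$ by conjugation with a permutation matrix $M_\sigma$ for the multiplication-by-$k$ permutation $\sigma$. The only nicety you gloss over is that $M_\sigma$ need not lie in $SL(p,\mathbb{C})$ — its determinant may be $-1$ — so to speak of $\overline{M_\sigma}\in PSL(p,\mathbb{C})$ one should first rescale (e.g.\ replace $M_\sigma$ by $-M_\sigma$ when $p$ is odd, as the paper does, or simply note that $PSL(p,\mathbb{C})=PGL(p,\mathbb{C})$ over $\mathbb{C}$); this is a one-line fix and does not affect the argument.
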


\begin{proof}

The equivalence in this lemma is straightforward.
 
 Let $\phi\in\Aut(\langle\overline{M_c}\rangle)$, then $\phi$ is uniquely determined by $l\in (\mathbb{Z}/p)^*$ where $\phi(\overline{M_c})=\overline{M_c}^l$.  We defined $c$ to be the permutation of $\mathbb{Z}/p$ sending $i$ to $i+1$. Let us now define  the permutation $\sigma_l$ of $\mathbb{Z}/p$ sending $i$ to $l\times i$. We easily see that $\sigma_l\circ c(i)=l\times i+l=c^l\circ \sigma_l(i)$. Therefore  $\sigma_lc\sigma_l^{-1}=c^l $ and $M_{\sigma_l}M_{c}M_{\sigma_l}^{-1}=M_c^l $. Let $M:=M_{\sigma_l}$ if $\det(M_{\sigma_l})=1$ and $-M_{\sigma_l}$ if $\det(M_{\sigma_l})=-1$. It follows that the conjugation by  $\overline{M}$ sends  $\iota(\mathcal{H}_{\overline{\rho}}^i)$ to $\iota(\mathcal{H}_{\phi\circ \overline{\rho}}^i)$. As a result $\varphi(\overline{\mathcal{H}_{\overline{\rho}}}^i)=\varphi(\overline{\mathcal{H}_{\phi\circ \overline{\rho}}}^i)$.  \end{proof}

\begin{remark}\label{newdecomposi} Fix a finitely generated group $\Gamma$ and  a prime number $p$. Then,  for any  normal subgroup $K$ of index $p$ in $\Gamma$, fix one group morphism $\overline{\rho}_K$ in $\Hom(\Gamma,\langle\overline{M_c}\rangle)$ verifying $K=\Ker(\overline{\rho})$. Lemma \ref{identifvarphiHrho} and the decomposition of the singular locus in Equation \ref{decompbadlocus} implies the following decomposition :

\begin{equation}\chi^i_{Sing}(\Gamma,PSL(p,\mathbb{C}))=\bigcup_{\substack{K\triangleleft \Gamma\\   \text{$[\Gamma:K]=p$} } }\varphi(\overline{\mathcal{H}_{\overline{\rho}_K}}^i)\text{.}
\label{decompbadlocus2}
\end{equation}

We shall call $\varphi(\overline{\mathcal{H}_{\overline{\rho}_K}}^i)$ the  \textit{pseudo-component} associated to $K$. When  $\Gamma$ is a free group, each of these pseudo-components is an irreducible component of $\chi^i_{Sing}(\Gamma,PSL(p,\mathbb{C}))$ (Remark \ref{irredcompfree}). However, in general it has no reason to be an irreducible component of the singular locus (the surface group is a counter example, see Remark \ref{irredcompsurf}).

\end{remark}

\section{The intersection pattern in the singular locus}\label{secint}

In this section, we give a description of the intersection pattern between the different pseudo-components defined above. The next lemma is straightforward and will be used several times throughout this section.

\begin{lemma}\label{notprop}
Let $\Gamma$ be a  group, $p$ be a prime number and $K,K'$ be two different normal subgroups of index $p$ in $\Gamma$. Then   $\Gamma/K\cap K'$ is isomorphic to $(\mathbb{Z}/p)^2$.
\end{lemma}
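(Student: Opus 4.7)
The plan is to exhibit $\Gamma/(K\cap K')$ as a subgroup of $(\mathbb{Z}/p)^2$ via the obvious diagonal map, and then rule out the proper subgroups of $(\mathbb{Z}/p)^2$ one by one using the hypothesis $K\neq K'$.

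First I would consider the group homomorphism
$$\Phi : \Gamma \longrightarrow (\Gamma/K)\times(\Gamma/K'), \quad \gamma\longmapsto (\gamma K,\gamma K').$$
Since $K$ and $K'$ have index $p$, both quotients are isomorphic to $\mathbb{Z}/p$, so the target is $(\mathbb{Z}/p)^2$. The kernel of $\Phi$ is exactly $K\cap K'$, so $\Phi$ induces an injection $\Gamma/(K\cap K') \hookrightarrow (\mathbb{Z}/p)^2$. In particular $\Gamma/(K\cap K')$ is a finite abelian $p$-group of order dividing $p^2$.

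Next I would show that $\Phi$ is surjective, which is equivalent to proving the image has order $p^2$. Composing $\Phi$ with each of the two coordinate projections yields the natural surjections $\Gamma\twoheadrightarrow\Gamma/K$ and $\Gamma\twoheadrightarrow\Gamma/K'$, so $\im(\Phi)$ projects onto both factors. If $\im(\Phi)$ had order strictly less than $p^2$, it would be a proper subgroup of $(\mathbb{Z}/p)^2$ that surjects onto both factors, forcing it to have order exactly $p$, so each coordinate projection would restrict to an isomorphism on the image. Then the kernel of the first projection intersected with $\im(\Phi)$ would be trivial; but this kernel contains the image of $K$ in $(\Gamma/K)\times(\Gamma/K')$, which is $\{0\}\times(K/(K\cap K'))$. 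Triviality of this subgroup would mean $K\subset K'$, and by equality of indices $K=K'$, contradicting the hypothesis.

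The argument is essentially a one-page chase and presents no real obstacle; the only point that requires care is noting that $\im(\Phi)$ necessarily surjects onto each factor (so one is really comparing $\Gamma/(K\cap K')$ to the whole of $\mathbb{Z}/p\times\mathbb{Z}/p$, not to some smaller ambient group) before invoking $K\neq K'$ to exclude the order-$p$ case.
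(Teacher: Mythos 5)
Your proof is correct. Note that the paper gives no proof at all here — it merely remarks that the lemma is straightforward — so there is nothing to compare against; your argument supplies the omitted details and would serve perfectly well. The diagonal map $\Phi\colon\Gamma\to(\Gamma/K)\times(\Gamma/K')$ is the standard device, and your case analysis on $\lvert\im(\Phi)\rvert$ correctly rules out the order-$p$ case using $K\neq K'$. One small streamlining you might prefer: since $K$ and $K'$ have prime index they are maximal, so $K\neq K'$ forces $KK'=\Gamma$ directly; then the second isomorphism theorem gives $[K:K\cap K']=[\Gamma:K']=p$, hence $[\Gamma:K\cap K']=p^2$, and the injection $\Gamma/(K\cap K')\hookrightarrow(\mathbb{Z}/p)^2$ you constructed is automatically an isomorphism by counting, with no need to discuss proper subgroups of $(\mathbb{Z}/p)^2$. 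This avoids the slightly delicate chase through the kernel of the first projection, but both routes are sound.
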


\begin{proposition}\label{abelirred}
Let $\Gamma$ be a finitely generated group, $p$ be a prime number and $K,K'$ be two different normal subgroups of index $p$ in $\Gamma$. Let $\rho:\Gamma\to PSL(p,\mathbb{C})$ be a representation whose conjugacy class belongs to $\varphi(\overline{\mathcal{H}_{\overline{\rho}_K}}^i)\cap \varphi(\overline{\mathcal{H}_{\overline{\rho}_{K'}}}^i)$. Then $\Ker(\rho)=K\cap K'$ and $\rho(\Gamma)$ is conjugate to $\langle \overline{D(\xi)}\rangle \times\langle\overline{M_c}\rangle$. 

\end{proposition}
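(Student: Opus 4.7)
The plan is to first normalize $\rho$ so that one of the two pseudo-component memberships becomes manifest, then use the structural constraint imposed by the second membership to force $\Ker(\rho)$ to contain $K\cap K'$. After conjugation I may assume $\rho \in \mathcal{H}_{\overline{\rho}_K}^{i}$, so that $\rho(\Gamma) \subseteq \overline{D} \rtimes \langle \overline{M_c} \rangle$ and $\rho(K) \subseteq \overline{D}$. The other hypothesis supplies some $g \in PSL(p,\mathbb{C})$ with $g \rho(K') g^{-1} \subseteq \overline{D}$, so $\rho(K')$ is conjugate to a subgroup of $\overline{D}$; crucially, this makes $\rho(K')$ \emph{liftably abelian} in $SL(p,\mathbb{C})$, meaning that the projective commutator $[x,y]$ (an invariant of a commuting pair $\overline{x}, \overline{y} \in PSL(p,\mathbb{C})$ taking values in $Z(SL(p,\mathbb{C}))$) is trivial for all pairs from $\rho(K')$: this is obvious on $\overline{D}$ (lifts in $D$ commute in $SL$) and is conjugation-invariant. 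The easy inclusion $\Ker(\rho) \subseteq K \cap K'$ is immediate from $\Ker(\rho) \subseteq \Ker(q \circ \rho)$ applied both to $\rho$ and to its conjugate $g\rho g^{-1}$.

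The heart of the argument is the reverse inclusion $K \cap K' \subseteq \Ker(\rho)$. Since $K\neq K'$ both have index $p$, one can pick $\gamma_1 \in K' \setminus K$; after possibly replacing $\overline{\rho}_{K'}$ using Lemma \ref{identifvarphiHrho}, one can assume $q(\rho(\gamma_1)) = \overline{M_c}$, so that $\rho(\gamma_1)$ lifts to $d_1 M_c \in SL(p,\mathbb{C})$ for some diagonal $d_1$. Given $\gamma \in K \cap K'$, the element $\rho(\gamma)$ lies both in $\overline{D}$ and in $\rho(K')$, so liftable abelianness gives $[d, d_1 M_c] = I_p$ in $SL(p,\mathbb{C})$ for any lift $d \in D$ of $\rho(\gamma)$. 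A direct computation, using $M_c d^{-1} M_c^{-1} = \mathrm{diag}(d^{-1}_{i-1})_i$, shows this commutator is the diagonal matrix with $(i,i)$-entry $d_i d_{i-1}^{-1}$; it equals $I_p$ iff all $d_i$ are equal, i.e.\ iff $d$ is a scalar matrix, i.e.\ iff $\rho(\gamma) = \overline{I_p}$.

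Combining the two inclusions yields $\Ker(\rho) = K \cap K'$, so by Lemma \ref{notprop} the image $\rho(\Gamma)$ is a quotient of $(\mathbb{Z}/p)^2$ and in particular abelian. Since $\rho$ is irreducible and bad, the dichotomy of Theorem \ref{classifcentr} forces us into the first branch (the second branch produces a non-abelian semidirect product), so $\rho(\Gamma)$ is conjugate to $\langle \overline{D(\xi)} \rangle \times \langle \overline{M_c} \rangle$.

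The step I expect to require the most care is the liftable abelianness: one must resist the tempting but wrong intuition that any commuting pair in $PSL(p,\mathbb{C})$ is simultaneously diagonalizable. For instance $\overline{D(\xi)}$ and $\overline{M_c}$ commute in $PSL(p,\mathbb{C})$, but $[D(\xi), M_c] = \xi I_p \neq I_p$ in $SL(p,\mathbb{C})$, so the pair is not simultaneously diagonalizable. The hypothesis ``conjugate to a subgroup of $\overline{D}$'' is strictly stronger than projective commutativity, and is precisely what is needed to kill the projective commutator and so run the calculation above.
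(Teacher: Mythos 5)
Your proof is correct, and at the crucial step it takes a route genuinely different from the paper's. After the common preliminaries (normalize $\rho \in \mathcal{H}_{\overline{\rho}_K}^i$, note that $\rho(K')$ is conjugate into $\overline{D}$, and observe the easy inclusion $\Ker(\rho)\subseteq K\cap K'$), the two arguments diverge. The paper shows that both $g\rho(K\cap K')g^{-1}$ and $h\rho(K\cap K')h^{-1}$ land in $\langle\overline{D(\xi)}\rangle$ and then argues indirectly: if $\rho(K\cap K')$ were nontrivial, $hg^{-1}$ would normalize $\langle\overline{D(\xi)}\rangle$, hence (after adjusting by a permutation matrix and invoking Lemma~\ref{centrdxi}) would normalize $\overline{D}$, forcing $K=K'$ and a contradiction. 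You instead isolate the stronger invariant of $\rho(K')$ --- that its projective commutator into $Z(SL(p,\mathbb{C}))$ vanishes identically, not merely that $\rho(K')$ is abelian in $PSL(p,\mathbb{C})$ --- and then a single explicit computation with $[d,d_1M_c]$ forces $\rho(\gamma)=\overline{I_p}$ directly for each $\gamma\in K\cap K'$. This is more local and avoids the contradiction argument; it also cleanly separates ``abelian in $PSL$'' from ``conjugate into $\overline{D}$'', a distinction you rightly emphasize at the end. One small slip: to arrange $q(\rho(\gamma_1))=\overline{M_c}$ you should replace $\gamma_1$ by a suitable power of itself (or invoke Lemma~\ref{identifvarphiHrho} to change $\overline{\rho}_K$, not $\overline{\rho}_{K'}$, since $q\circ\rho=\overline{\rho}_K$ is what governs $q(\rho(\gamma_1))$). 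This is immaterial in any case, because the commutator computation goes through verbatim with $M_c^j$ for any $j\not\equiv 0\pmod p$, using $\gcd(j,p)=1$. The final deduction (Lemma~\ref{notprop} plus the observation that the second branch of Theorem~\ref{classifcentr} is always non-abelian, since a non-trivial $\overline{K}\neq\langle\overline{D(\xi)}\rangle$ cannot centralize $\overline{M_c}$ by Lemma~\ref{centrMc}) matches the paper.
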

\begin{proof}By definition, $\rho$ is both conjugate to a representation in $\iota(\overline{\mathcal{H}_{\overline{\rho}_K}}^i)$ and to a representation in $\iota(\overline{\mathcal{H}_{\overline{\rho}_{K'}}}^i)$. Let $g,h\in PSL(p,\mathbb{C})$ verifying that $g\rho g^{-1}$ (resp. $h\rho h^{-1}$) belongs to $\iota(\overline{\mathcal{H}_{\overline{\rho}_K}}^i)$ (resp. $\iota(\overline{\mathcal{H}_{\overline{\rho}_{K'}}}^i)$). This implies that $g\rho(K) g^{-1}\leq \overline{D}$ and $h\rho(K') h^{-1}\leq \overline{D}$. In  particular,  $\rho(K)$ and $\rho(K')$ are both abelian.

\bigskip

Since $K$ and $K'$ are different and both of index $p$ in $\Gamma$, there is $\gamma_1\in K'\cap K^c$  verifying $g\rho(\gamma_1)g^{-1}=\overline{d_1}\text{ }\overline{M_c}$ with $\overline{d_1}\in\overline{D}$.  Let $\gamma\in K\cap K'$, since  $g\rho(K')g^{-1}$ is abelian, $g\rho(\gamma_1)g^{-1} $ and $g\rho(\gamma)g^{-1}$ commute. Since $g\rho(\gamma)g^{-1}\in\overline{D}$, it also commutes with $\overline{d_1}$ and, therefore, it commutes with $\overline{M_c}$.  By Lemma \ref{centrMc}, an element in $\overline{D}$ commuting with $\overline{M_c}$ necessarily belongs to $\langle \overline{D(\xi)}\rangle$. Thus,    $g\rho(K\cap K')g^{-1}$ is included in $\langle \overline{D(\xi)}\rangle$. Likewise, $h\rho(K\cap K')h^{-1}$ is included in $\langle \overline{D(\xi)}\rangle$.

\bigskip

Assume  $K\cap K'$ is not included in $\Ker(\rho)$. Then,   the groups $g\rho(K\cap K')g^{-1}$ and $h\rho(K\cap K')h^{-1}$ are both equal to $\langle\overline{D(\xi)}\rangle$. Thus, $hg^{-1}$ normalizes $\langle\overline{D(\xi)}\rangle$.

Let $0<k<p$ be an integer such that $hg^{-1}\overline{D(\xi)}gh^{-1}=\overline{D(\xi)}^k$ and $M_{\sigma}$ be a permutation matrix such that $M_{\sigma}D(\xi)^kM_{\sigma}^{-1}=D(\xi)$. Then $\overline{M_{\sigma}}hg^{-1}$ commutes with $\overline{D(\xi)}$, whence (Lemma \ref{centrdxi})  belongs to $\overline{D}\rtimes\langle\overline{M_c}\rangle$. Therefore, $hg^{-1}$ normalizes $\overline{D}$ and, in particular $K=K'$ which is a contradiction. Therefore $\Ker(\rho)=K\cap K'$. 

\bigskip

Lemma \ref{notprop} states that $\Gamma/K\cap K'$ is isomorphic to $(\mathbb{Z}/p)^2$. Therefore, $\rho(\Gamma)$ is  abelian irreducible.  Theorem \ref{classifcentr} implies that any abelian irreducible subgroup of $PSL(p,\mathbb{C})$ is conjugate to    $\langle \overline{D(\xi)}\rangle \times\langle\overline{M_c}\rangle$, whence the result. \end{proof}

\begin{remark}\label{paraminter} Assume $\Gamma$, $p$, $K$ and $K'$ are given like in the preceding proposition. This proposition implies that, up to conjugation, any representation $\rho$ whose conjugacy class belongs to $\varphi(\overline{\mathcal{H}_{\overline{\rho}_K}}^i)\cap \varphi(\overline{\mathcal{H}_{\overline{\rho}_{K'}}}^i)$ factorizes through the projection to $\Gamma/K\cap K'$ in an isomorphism between $\Gamma/K\cap K'$ and $\langle \overline{D(\xi)}\rangle \times\langle\overline{M_c}\rangle$. Therefore, to count the number of elements in $\varphi(\overline{\mathcal{H}_{\overline{\rho}_K}}^i)\cap \varphi(\overline{\mathcal{H}_{\overline{\rho}_{K'}}}^i)$, it suffices to count the number of isomorphisms between $\Gamma/K\cap K'$ and $\langle \overline{D(\xi)}\rangle \times\langle\overline{M_c}\rangle$ up to conjugation in $PSL(p,\mathbb{C})$.

 \end{remark}

Let $p$ be a prime number. The group $\langle \overline{D(\xi)}\rangle \times\langle\overline{M_c}\rangle$ is a $\mathbb{Z}/p$-vector space of dimension $2$  and has the following  $\mathbb{Z}/p$-basis $(  \overline{D(\xi)},\overline{M_c})$. Once the basis is fixed, it is natural to identify its automorphism group to $GL(2,\mathbb{Z}/p)$.

\begin{lemma}\label{countZp2}
Let $p$ be a prime number. Then, the subgroup of  $\Aut(\langle \overline{D(\xi)}\rangle \times\langle\overline{M_c}\rangle)$ induced by the normalizer of  $\langle \overline{D(\xi)}\rangle \times\langle\overline{M_c}\rangle$ in $PSL(p,\mathbb{C})$ is  $SL(2,\mathbb{Z}/p)$. 

\end{lemma}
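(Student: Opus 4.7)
The plan is to use a commutator pairing on $H:=\langle\overline{D(\xi)}\rangle\times\langle\overline{M_c}\rangle$. Assume $p$ odd, the case $p=2$ being straightforward. For $\overline{A},\overline{B}\in H$, choose arbitrary lifts $A,B\in SL(p,\mathbb{C})$. Since $\overline{A}$ and $\overline{B}$ commute in $PSL(p,\mathbb{C})$, the commutator $[A,B]$ lies in $Z(SL(p,\mathbb{C}))=\langle\xi I_p\rangle$, and it is independent of the choice of lifts because central elements drop out of commutators. This yields a well-defined map $\omega:H\times H\to\mathbb{Z}/p$ by $[A,B]=\xi^{\omega(\overline{A},\overline{B})}I_p$. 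Standard commutator identities show that $\omega$ is $\mathbb{Z}/p$-bilinear and alternating, and the identity $[D(\xi),M_c]=\xi I_p$ used earlier gives $\omega(\overline{D(\xi)},\overline{M_c})=1$, so $\omega$ is non-degenerate.

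For the inclusion that the image sits inside $SL(2,\mathbb{Z}/p)$, take any $g$ in the normalizer of $H$ in $PSL(p,\mathbb{C})$ and lift to $\tilde g\in SL(p,\mathbb{C})$. Conjugation by $\tilde g$ fixes central elements, so $[\tilde g A\tilde g^{-1},\tilde g B\tilde g^{-1}]=\tilde g[A,B]\tilde g^{-1}=[A,B]$, and hence $\omega$ is preserved by the induced automorphism $\varphi_g$ of $H$. Since $H$ is a two-dimensional $\mathbb{Z}/p$-vector space and $\omega$ is a non-degenerate alternating form, an element of $GL(2,\mathbb{Z}/p)$ preserves $\omega$ if and only if its determinant equals $1$ (it acts on $\wedge^2 H\cong\mathbb{Z}/p$ by multiplication by its determinant). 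So $\varphi_g\in SL(2,\mathbb{Z}/p)$.

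For the reverse inclusion, I exhibit normalizing elements inducing a generating set of $SL(2,\mathbb{Z}/p)$. A direct calculation shows that the diagonal matrix $g$ with $g_{i,i}:=\lambda\xi^{i(i+1)/2}$, where $\lambda$ is chosen so that $\det g=1$, satisfies $gM_cg^{-1}=D(\xi)M_c$ and $gD(\xi)g^{-1}=D(\xi)$, and therefore induces the transvection $T:=\begin{pmatrix}1&1\\0&1\end{pmatrix}$ in the basis $(\overline{D(\xi)},\overline{M_c})$. Similarly, a scalar multiple of the Fourier matrix $F$ with entries $F_{i,j}=\xi^{ij}/\sqrt p$ (rescaled to determinant $1$) satisfies $FD(\xi)F^{-1}=M_c^{-1}$ and $FM_cF^{-1}=D(\xi)$, inducing $S:=\begin{pmatrix}0&1\\-1&0\end{pmatrix}$. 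Since $S$ and $T$ form a standard generating set of $SL(2,\mathbb{Z}/p)$, the image equals $SL(2,\mathbb{Z}/p)$.

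The main obstacle is the surjectivity step, for which one must write down explicit normalizers realizing a generating set. The computations are transparent but rely on the Heisenberg structure of $\langle D(\xi),M_c\rangle\leq SL(p,\mathbb{C})$ (an extraspecial group of order $p^3$), and the statement is essentially the fact that its outer symplectic symmetries are captured by $SL(2,\mathbb{Z}/p)$, as in the classical Weil representation picture.
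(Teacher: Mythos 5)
Your surjectivity half is essentially identical to the paper's: both exhibit the same two normalizing matrices (the diagonal matrix with entries $\xi^{i(i+1)/2}$ and a rescaled Vandermonde/Fourier matrix) inducing $\begin{pmatrix}1&1\\0&1\end{pmatrix}$ and $\begin{pmatrix}0&1\\-1&0\end{pmatrix}$, which generate $SL(2,\mathbb{Z}/p)$.

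Where you genuinely diverge is the upper-bound direction. The paper argues indirectly: given $\psi\in\Aut_0(H)$ of determinant $d$, it uses the already-established containment $SL(2,\mathbb{Z}/p)\subseteq\Aut_0(H)$ to reduce to $\psi_d=\begin{pmatrix}1&0\\0&d\end{pmatrix}$, realizes $\psi_d$ by some $g$ commuting with $\overline{D(\xi)}$, invokes Lemma \ref{centrdxi} to force $g\in\overline{D}\rtimes\langle\overline{M_c}\rangle$, and then projects onto $\langle\overline{M_c}\rangle$ to conclude $d=1$. You instead define the commutator pairing $\omega$ on $H$ valued in $Z(SL(p,\mathbb{C}))\cong\mathbb{Z}/p$, check it is a nondegenerate alternating bilinear form (with $\omega(\overline{D(\xi)},\overline{M_c})=1$ from $[D(\xi),M_c]=\xi I_p$), observe that any lift $\tilde g$ of a normalizer acts on commutators by conjugation and hence fixes $\omega$ because $[A,B]$ is central, and then use that for a $2$-dimensional space the symplectic group is exactly $SL_2$. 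Your argument is more self-contained (it does not need the surjectivity half as a crutch, nor Lemma \ref{centrdxi}) and more conceptual — it identifies the invariant $\omega$ that the normalizer must preserve, making the $SL_2$ bound structural rather than computational. It also makes transparent why the answer is $SL$ rather than $GL$, which the paper's calculation obscures. The only small thing to flag: you should say a word confirming that the action of $GL(2,\mathbb{Z}/p)$ on alternating forms on a $2$-dimensional space is via the determinant (you do gesture at $\wedge^2 H$, which suffices). The proposal is correct.
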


\begin{proof} Assume $p$  odd. We denote  $\Aut_0(\langle \overline{D(\xi)}\rangle \times\langle\overline{M_c}\rangle)$ the subgroup of automorphisms of $\langle \overline{D(\xi)}\rangle \times\langle\overline{M_c}\rangle$ which can be realized as the conjugation by an element of $PSL(p,\mathbb{C})$.

Let $V_1:=\lambda V_0$ where $V_0:=(\xi^{ij})_{0\leq i,j\leq p-1}$ is the Vandermonde matrix and $\lambda$ be $\det(V_0)^{-\frac{1}{p}}=(-1)^{\frac{p-1}{2p}}/p$. Direct computations lead to $V_0D(\xi)V_0^{-1}=M_c^{-1}$ and $V_0M_cV_0^{-1}=D(\xi)$. Therefore  $\overline{V_1}\text{ }\overline{D(\xi)}\text{ }\overline{V_1}^{-1}=\overline{M_c}^{-1}$ and $ \overline{V_1}\text{ }\overline{M_c}\text{ }\overline{V_1}^{-1}=\overline{D_{\xi}}$. Thus $\overline{V_1}$ normalizes $\langle \overline{D(\xi)}\rangle\times \langle \overline{M_c}\rangle$ and induces by conjugation the automorphism $\begin{pmatrix}0&1\\ -1&0\end{pmatrix}$ on $\langle \overline{D(\xi)}\rangle \times\langle\overline{M_c}\rangle$. 

Let $S$ be a diagonal matrix where $S_{i,i}=\xi^{\frac{i(i+1)}{2}}$ for $i=0,\dots,p-1$. Then $\det(S)=\xi^{\frac{(p-1)p(p+1)}{6}}=1$ since $p$ is odd. Since $S$ is diagonal, it commutes with $D(\xi)$ therefore $\overline{S}\text{ }\overline{D(\xi)}\text{ }\overline{S}^{-1}=\overline{D(\xi)}$. Direct computations show that $SM_cS^{-1}=D(\xi)M_c$. Therefore  $\overline{S}\text{ }\overline{M_c}\text{ }\overline{S}^{-1}=\overline{M_c}\text{ }\overline{D(\xi)}$. Whence $\overline{S}$ normalizes $\langle \overline{D(\xi)}\rangle\times \langle \overline{M_c}\rangle$ and induces by conjugation the automorphism $\begin{pmatrix}1&1\\  0&1\end{pmatrix}$ on $\langle \overline{D(\xi)}\rangle \times\langle\overline{M_c}\rangle$.

Therefore, the subgroup generated by $\begin{pmatrix}0&1\\ -1&0\end{pmatrix}$ and $\begin{pmatrix}1&1\\ 0&1\end{pmatrix}$ is contained in $\Aut_0(\langle \overline{D(\xi)}\rangle \times\langle\overline{M_c}\rangle)$ . Since these matrices generate $SL(2,\mathbb{Z}/p)$, this group is contained in $\Aut_0(\langle \overline{D(\xi)}\rangle \times\langle\overline{M_c}\rangle)$.

\bigskip

Conversely, let  $\psi$ be in $\Aut_0(\langle \overline{D(\xi)}\rangle \times\langle\overline{M_c}\rangle)$. We denote $d$ its determinant. We denote $\psi_d:=\begin{pmatrix}1&0\\ 0&d\end{pmatrix}$. Since $\det(\psi_d^{-1}\psi)=1$, it follows that the automorphism $\psi_d$ also belongs to $\Aut_0(\langle \overline{D(\xi)}\rangle \times\langle\overline{M_c}\rangle)$.

Let $g$ be in $PSL(p,\mathbb{C})$ verifying for all $x\in \langle \overline{D(\xi)}\rangle \times\langle\overline{M_c}\rangle$ that $gxg^{-1}=\psi_d(x)$. Then $g$ commutes with $\overline{D(\xi)}$. Furthermore $g\overline{M_c}g^{-1}=\overline{M_c}^d$. Since Lemma \ref{centrdxi} implies that $g$ belongs to $\overline{D}\rtimes \langle\overline{M_c}\rangle$, we may project this equality onto $ \langle\overline{M_c}\rangle$ to get that $\overline{M_c}=\overline{M_c}^d$ and $d=1$. As a result $\det(\psi)=1$ and we are done.  \end{proof}

Therefore, we can count the number of elements in  the intersection of two pseudo-components  :

\begin{corollary}\label{countinter}

Let $\Gamma$ be a finitely generated group, $p$ be a prime number and $K,K'$ be two different normal subgroups of index $p$ in $\Gamma$. Then $\varphi(\overline{\mathcal{H}_{\overline{\rho}_K}}^i)\cap \varphi(\overline{\mathcal{H}_{\overline{\rho}_{K'}}}^i)$ is finite of cardinal $p-1$.

\end{corollary}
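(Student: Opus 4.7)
The plan is to reduce the counting problem to a purely group-theoretic one and then apply Lemma \ref{countZp2}.

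First, I would invoke Proposition \ref{abelirred} to ensure that every conjugacy class $[\rho]$ in the intersection $\varphi(\overline{\mathcal{H}_{\overline{\rho}_K}}^i) \cap \varphi(\overline{\mathcal{H}_{\overline{\rho}_{K'}}}^i)$ has kernel exactly $K \cap K'$ and image conjugate to $A := \langle \overline{D(\xi)} \rangle \times \langle \overline{M_c} \rangle$. By Remark \ref{paraminter}, each such conjugacy class is therefore represented by an isomorphism $\overline{\rho}: \Gamma/(K \cap K') \to A$, and two such isomorphisms give conjugate representations if and only if they differ by conjugation by an element of $PSL(p,\mathbb{C})$. This reduces the problem to counting isomorphisms $\Gamma/(K\cap K') \to A$ modulo this conjugation action.

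Next, Lemma \ref{notprop} gives $\Gamma/(K\cap K') \cong (\mathbb{Z}/p)^2$, and $A$ is also a two-dimensional $\mathbb{Z}/p$-vector space. Hence the set of isomorphisms $\Gamma/(K\cap K') \to A$ is a torsor under $\mathrm{Aut}(A) \cong GL(2, \mathbb{Z}/p)$, so it has cardinality $(p^2-1)(p^2-p)$. Any $g \in PSL(p,\mathbb{C})$ that conjugates one such isomorphism to another must normalize the common image $A$ and therefore act on $A$ through the subgroup of $\mathrm{Aut}(A)$ described in Lemma \ref{countZp2}, namely $SL(2, \mathbb{Z}/p)$.

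Finally, since $\mathrm{Aut}(A)$ acts freely on the set of isomorphisms by post-composition, so does the subgroup $SL(2, \mathbb{Z}/p)$. Therefore the number of orbits is
\[
\frac{|GL(2, \mathbb{Z}/p)|}{|SL(2, \mathbb{Z}/p)|} = \frac{(p^2-1)(p^2-p)}{p(p^2-1)} = p-1,
\]
which gives the desired count. The only subtle point to check carefully is that the conjugation action really factors through $SL(2, \mathbb{Z}/p)$ rather than through a proper subgroup, but this is exactly the content of Lemma \ref{countZp2} combined with the observation that the centralizer of $A$ in $PSL(p,\mathbb{C})$ acts trivially on $A$ by post-composition, so only the image in $\mathrm{Aut}(A)$ matters.
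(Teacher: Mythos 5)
Your proposal is correct and follows essentially the same path as the paper: both reduce via Remark \ref{paraminter} to counting isomorphisms $\Gamma/(K\cap K') \to \langle\overline{D(\xi)}\rangle\times\langle\overline{M_c}\rangle$ up to conjugation, then invoke Lemma \ref{countZp2} to identify the conjugation equivalence with the free $SL(2,\mathbb{Z}/p)$-action by post-composition, giving $|GL(2,\mathbb{Z}/p)|/|SL(2,\mathbb{Z}/p)| = p-1$. The paper phrases the final step by fixing one isomorphism $\psi_0$ and testing when $\psi\circ\psi_0 \sim \psi_0$, whereas you phrase it as an orbit count under a free action, but these are the same argument.
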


\begin{proof} Following Remark \ref{paraminter}, there are as many elements in $\varphi(\overline{\mathcal{H}_{\overline{\rho}_K}}^i)\cap \varphi(\overline{\mathcal{H}_{\overline{\rho}_{K'}}}^i)$ as there are isomorphisms between $\Gamma/K\cap K'$ and $\langle \overline{D(\xi)}\rangle \times\langle\overline{M_c}\rangle$ up to conjugation.

We fix $\psi_0$ such isomorphism. Since the natural action of $\Aut(\langle \overline{D(\xi)}\rangle \times\langle\overline{M_c}\rangle)$ on the set of isomorphisms between $\Gamma/K\cap K'$ and $\langle \overline{D(\xi)}\rangle \times\langle\overline{M_c}\rangle$ is transitive, it suffices to understand under which condition $\psi\circ\phi_0$ is conjugate to $\phi_0$.  Lemma \ref{countZp2} implies that $\psi\circ\phi_0$ is conjugate to $\phi_0$ if and only if $\psi\in SL(2,\mathbb{Z}/p)$.  Therefore, there are $|GL(2,\mathbb{Z}/p)|/|SL(2,\mathbb{Z}/p)|=p-1$  elements in $\varphi(\overline{\mathcal{H}_{\overline{\rho}_K}}^i)\cap \varphi(\overline{\mathcal{H}_{\overline{\rho}_{K'}}}^i)$.  \end{proof}

\begin{proposition}\label{abirredK}

Let $\Gamma$ be a finitely generated group, $p$ be a prime number and $\rho$ be an abelian irreducible representation from $\Gamma$ to $PSL(p,\mathbb{C})$. Then  the conjugacy class of $\rho$ belongs to exactly $p+1$ pseudo-components.

\end{proposition}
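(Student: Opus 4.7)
The plan is to show that the pseudo-components containing $[\rho]$ correspond bijectively to the index-$p$ normal subgroups of $\Gamma$ containing $N:=\Ker(\rho)$, and then count these subgroups. Since $\rho$ is abelian irreducible, Theorem~\ref{classifcentr} implies that $\rho(\Gamma)$ is conjugate to $\langle \overline{D(\xi)}\rangle\times \langle \overline{M_c}\rangle\cong(\mathbb{Z}/p)^2$, hence $\Gamma/N\cong(\mathbb{Z}/p)^2$. The set of index-$p$ subgroups of $\Gamma$ containing $N$ corresponds to the set of hyperplanes in the $\mathbb{F}_p$-vector space $\Gamma/N\cong\mathbb{F}_p^2$, of which there are $\frac{p^2-1}{p-1}=p+1$.

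For the necessary direction, if $[\rho]\in\varphi(\overline{\mathcal{H}_{\overline{\rho}_K}}^i)$, then up to conjugation we may assume $\rho(\Gamma)\subseteq\overline{D}\rtimes\langle\overline{M_c}\rangle$ with $q\circ\rho=\overline{\rho}_K$ (possibly after composing $\overline{\rho}_K$ with an automorphism of $\langle\overline{M_c}\rangle$, which does not change the pseudo-component by Lemma~\ref{identifvarphiHrho}). Since $N=\Ker(\rho)\subseteq\Ker(q\circ\rho)=K$, we obtain the inclusion $N\subseteq K$.

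For the sufficient direction, fix an index-$p$ normal subgroup $K\supseteq N$. After conjugation, assume $\rho(\Gamma)=\langle\overline{D(\xi)}\rangle\times\langle\overline{M_c}\rangle$. Then $\rho(K)$ is a subgroup of index $p$ in $(\mathbb{Z}/p)^2$, that is, a line in $\mathbb{F}_p^2$. By Lemma~\ref{countZp2}, the normalizer of $\langle\overline{D(\xi)}\rangle\times\langle\overline{M_c}\rangle$ in $PSL(p,\mathbb{C})$ acts on this group as $SL(2,\mathbb{Z}/p)$, which acts transitively on the set of lines in $\mathbb{F}_p^2$. Hence we may conjugate further by an element of this normalizer so as to send $\rho(K)$ onto $\langle\overline{D(\xi)}\rangle\subseteq\overline{D}$, yielding a conjugate $\rho'$ with $\rho'(K)\subseteq\overline{D}$ and $\rho'(\Gamma)\subseteq\overline{D}\rtimes\langle\overline{M_c}\rangle$. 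Since $\rho'$ is irreducible, $q\circ\rho'$ is non-trivial, and its kernel (of index $p$, containing $K$) must equal $K$. By Lemma~\ref{identifvarphiHrho}, this shows $[\rho]\in\varphi(\overline{\mathcal{H}_{\overline{\rho}_K}}^i)$.

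Finally, the $p+1$ pseudo-components obtained this way are genuinely distinct: if two different $K,K'$ gave the same pseudo-component, then every element of that pseudo-component would lie in $\varphi(\overline{\mathcal{H}_{\overline{\rho}_K}}^i)\cap\varphi(\overline{\mathcal{H}_{\overline{\rho}_{K'}}}^i)$, contradicting the finiteness statement in Corollary~\ref{countinter} once one exhibits a non-abelian irreducible representation in the pseudo-component (or alternatively by observing that Proposition~\ref{abelirred} forces the kernel of any such intersection point to be $K\cap K'$). The main technical point of the argument is the sufficient direction, which rests entirely on the transitivity of $SL(2,\mathbb{F}_p)$ on lines in $\mathbb{F}_p^2$ provided by Lemma~\ref{countZp2}.
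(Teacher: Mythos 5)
Your core argument is correct and follows the paper's approach exactly: you establish the bijection between pseudo-components containing $[\rho]$ and index-$p$ normal subgroups of $\Gamma$ containing $\Ker(\rho)$, with the necessary direction being immediate from $\Ker(\rho)\subseteq\Ker(q\circ\rho')$ and the sufficient direction resting on Theorem~\ref{classifcentr} together with the transitivity of $SL(2,\mathbb{Z}/p)$ on lines provided by Lemma~\ref{countZp2}, then count hyperplanes in $(\mathbb{Z}/p)^2$.

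The final paragraph, however, adds a claim that is both unnecessary and false. Pseudo-components are defined in Remark~\ref{newdecomposi} as subsets \emph{indexed} by normal subgroups $K$ of index $p$, and the proposition (as well as the paper's proof) is counting those indices; no assertion that the corresponding subsets are pairwise distinct is made or needed. Indeed they need not be: take $\Gamma=(\mathbb{Z}/p)^2$. Here every irreducible representation into $PSL(p,\mathbb{C})$ is abelian, the singular locus consists of exactly $p-1$ conjugacy classes (Theorem~\ref{combi}, item 4, with $r=2$), and by item 3 each of those classes lies in all $p+1$ pseudo-components; thus all $p+1$ pseudo-components coincide as subsets of $\chi^i_{Sing}(\Gamma,PSL(p,\mathbb{C}))$. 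Your first route to a contradiction, exhibiting a non-abelian irreducible representation in the pseudo-component, cannot be carried out for this $\Gamma$, and the parenthetical alternative, that Proposition~\ref{abelirred} forces the kernel of any point in the intersection to be $K\cap K'$, is true but yields no contradiction since every element here has trivial kernel $K\cap K'=\{1\}$. Simply delete the final paragraph; the rest of your proof is sound and matches the paper.
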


\begin{proof}Let $K$ be a normal subgroup of index $p$ in $\Gamma$. We first show that  the conjugacy class of $\rho$ belongs to $\varphi(\overline{\mathcal{H}_{\overline{\rho}_K}}^i)$ if and only if $\Ker(\rho)$ is contained in $K$. 
 
If some conjugate $g\rho g^{-1}$ of $\rho$ belongs to $\iota(\mathcal{H}_{\overline{\rho}_K}^i)$, then  $\Ker(\rho)$ will  be contained in $\Ker\left(q\circ\left(g\rho g^{-1}\right)\right)=K$.

Assume   that $\Ker(\rho)$ is contained in $K$. Since $\rho$ is abelian irreducible, Theorem \ref{classifcentr} implies that $\rho$ is conjugate to a representation $g\rho g^{-1}$ from $\Gamma$ onto $\langle \overline{D(\xi)}\rangle \times\langle\overline{M_c}\rangle$. 

Since $K$ strictly contains $\Ker(\rho)$, it follows that $g\rho (K)g^{-1}$ is a subgroup of index $p$ in $\langle \overline{D(\xi)}\rangle \times\langle\overline{M_c}\rangle$. The group $SL(2,\mathbb{Z}/p)$ acts transitively by automorphism on the subgroups of index $p$  in $\langle \overline{D(\xi)}\rangle \times\langle\overline{M_c}\rangle$.

Therefore, Lemma \ref{countZp2} implies that there is $h\in PSL(p,\mathbb{C})$ normalizing the group $\langle \overline{D(\xi)}\rangle \times\langle\overline{M_c}\rangle$ and  verifying that $hg\rho(K)g^{-1}h^{-1}=\langle \overline{D(\xi)}\rangle $.

Thus the representation $hg\rho(hg)^{-1}$ is still a representation in $\overline{D}\rtimes\langle\overline{M_c}\rangle$ and  $\Ker\left(q\circ \left(hg\rho(hg)^{-1}\right)\right)=K$. Therefore, its conjugacy class belongs to $\varphi(\overline{\mathcal{H}_{\overline{\rho}_K}}^i)$.

\bigskip

Using this equivalence, there are as many pseudo-components containing the conjugacy class of $\rho$   as  there are normal subgroups of index $p$ containing $\Ker(\rho)$. This is the number of subgroups of index $p$ in $\Gamma/\Ker(\rho)$ which is isomorphic to $(\mathbb{Z}/p)^2$. Thus, there are $p+1$ of them.   \end{proof}

We sum up this combinatorial information in one single theorem. Before that, we recall that if $\Gamma$ is a group then $\Gamma^{Ab}/p\Gamma^{Ab}$ is a $\mathbb{Z}/p$-vector space. Its dimension is called the \textit{$p$-rank} of $\Gamma$. 

\combi*

\begin{proof}\begin{enumerate}
\item In Remark \ref{newdecomposi}, we justified that $\chi^i_{Sing}(\Gamma,PSL(p,\mathbb{C}))$ was the union of  pseudo-components associated to $K$ for $K$ normal subgroups of index $p$ in $\Gamma$. Since the set of such normal subgroups is in bijection with the set of subgroups of index $p$ in $\Gamma^{Ab}/p\Gamma^{Ab}$, we have $\frac{p^r-1}{p-1}$ of them.

\item It is the statement of Proposition \ref{abelirred} and Corollary \ref{countinter}.

\item It is the statement of Proposition \ref{abirredK}.

\item On one hand, for each unordered pair $\{K,K'\}$ of different normal subgroups of index $p$ in $\Gamma$, there are $p-1$ conjugacy classes of abelian irreducible representations from $\Gamma$ to $PSL(p,\mathbb{C})$ in $\varphi(\overline{\mathcal{H}_{\overline{\rho}_K}}^i)\cap \varphi(\overline{\mathcal{H}_{\overline{\rho}_{K'}}}^i)$ and there are $\begin{pmatrix}\frac{p^r-1}{p-1}\\2\end{pmatrix}$ such unordered pairs.

On the other hand, for each conjugacy class abelian irreducible representation from $\Gamma$ to $PSL(p,\mathbb{C})$, there are $\begin{pmatrix}p+1\\2\end{pmatrix}$ unordered pairs $\{K,K'\}$ of different normal subgroups of index $p$ in $\Gamma$ for which the conjugacy class of this representation is contained in $\varphi(\overline{\mathcal{H}_{\overline{\rho}_K}}^i)\cap \varphi(\overline{\mathcal{H}_{\overline{\rho}_{K'}}}^i)$.

Let $N_{\Gamma,p}$ be the number of conjugacy classes of abelian irreducible representations. Then :

$$N_{\Gamma,p}\begin{pmatrix}p+1\\2\end{pmatrix}=\begin{pmatrix}\frac{p^r-1}{p-1}\\2\end{pmatrix}(p-1)\text{, } N_{\Gamma,p}\frac{(p+1)p}{2}=\frac{(p^r-1)(p^r-p)}{2(p-1)^2}(p-1) $$

Therefore, the number of conjugacy classes of abelian irreducible representations from $\Gamma$ to $PSL(p,\mathbb{C})$ is $\frac{(p^r-1)(p^r-p)}{(p-1)p(p+1)}=\frac{(p^r-1)(p^{r-1}-1)}{p^2-1}$.\end{enumerate}  \end{proof}

In this theorem, the combinatorial information    about the singular locus of the character variety only depends on the $p$-rank of $\Gamma$. To understand its topology, Section \ref{diagbyfin} tells us that we need to compute cohomology groups. This is what we will do for free groups in Section \ref{freegrpcase} and for surface groups in Section \ref{surfgrpcase}.

\section{The free group case}\label{freegrpcase}

Define $\Gamma:=\mathbb{F}_l$ to be the free group of rank $l\geq 2$. We recall that the abelianization of $\mathbb{F}_l$ is $\mathbb{Z}^l$. Furthermore the canonical map from $\Aut(\mathbb{F}_l)$ to $\Aut((\mathbb{F}_l)^{Ab})=GL(l,\mathbb{Z})$ is known to be surjective (c.f. \cite{L-S}). Since the set of  normal subgroups $K$ of index $p$ is in bijective correspondence with the set of subgroups of index $p$ in $(\mathbb{F}_l)^{Ab}=\mathbb{Z}^l$, it follows that $\Aut(\mathbb{F}_l)$ acts transitively on the set of normal subgroups of index $p$ in $\Gamma$. Using this, we have :

\begin{proposition}\label{onesinglphiKfree}

Let $l\geq 2$ and $K,K'$ be two normal subgroups of index $p$ in $\mathbb{F}_l$. Then $\varphi(\overline{\mathcal{H}_{\overline{\rho}_K}}^i)$ and $\varphi(\overline{\mathcal{H}_{\overline{\rho}_{K'}}}^i)$ are homeomorphic to each other. 

\end{proposition}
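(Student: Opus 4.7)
My plan is to use the transitivity of the $\Aut(\mathbb{F}_l)$-action on index-$p$ normal subgroups (which is stated just before the proposition, via the surjection $\Aut(\mathbb{F}_l)\twoheadrightarrow GL(l,\mathbb{Z})$) together with Lemma \ref{identifvarphiHrho} to transport one pseudo-component onto the other. Concretely, I would pick $\alpha\in\Aut(\mathbb{F}_l)$ with $\alpha^{-1}(K)=K'$ and consider the precomposition map
\[
\alpha^{*}:\Hom^i(\mathbb{F}_l,PSL(p,\mathbb{C}))\longrightarrow\Hom^i(\mathbb{F}_l,PSL(p,\mathbb{C})),\qquad \rho\longmapsto\rho\circ\alpha.
\]
This is a homeomorphism (its inverse is $(\alpha^{-1})^{*}$), it preserves irreducibility since $\rho$ and $\rho\circ\alpha$ have the same image, and it commutes with the $PSL(p,\mathbb{C})$-conjugation action. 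Hence it descends to a self-homeomorphism $\overline{\alpha^{*}}$ of $\chi^i(\mathbb{F}_l,PSL(p,\mathbb{C}))$.

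Next I would check that $\overline{\alpha^{*}}$ sends $\varphi(\overline{\mathcal{H}_{\overline{\rho}_K}}^i)$ to $\varphi(\overline{\mathcal{H}_{\overline{\rho}_{K'}}}^i)$. For $\rho\in\mathcal{H}_{\overline{\rho}_K}^i$, the image of $\alpha^{*}(\rho)=\rho\circ\alpha$ still lies in $\overline{D}\rtimes\langle\overline{M_c}\rangle$, and
\[
q\circ(\rho\circ\alpha)=\overline{\rho}_K\circ\alpha,
\]
whose kernel is $\alpha^{-1}(K)=K'$. Thus $\overline{\rho}_K\circ\alpha$ and $\overline{\rho}_{K'}$ are two non-trivial morphisms from $\mathbb{F}_l$ to $\langle\overline{M_c}\rangle$ with the same kernel, so by Lemma \ref{identifvarphiHrho} they differ by an automorphism of $\langle\overline{M_c}\rangle$ and
\[
\varphi\bigl(\overline{\mathcal{H}_{\overline{\rho}_K\circ\alpha}}^i\bigr)=\varphi\bigl(\overline{\mathcal{H}_{\overline{\rho}_{K'}}}^i\bigr).
\]
Therefore $\overline{\alpha^{*}}\bigl(\varphi(\overline{\mathcal{H}_{\overline{\rho}_K}}^i)\bigr)\subseteq\varphi(\overline{\mathcal{H}_{\overline{\rho}_{K'}}}^i)$, and applying the same argument to $(\alpha^{-1})^{*}$ gives the reverse inclusion. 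Restricting the self-homeomorphism $\overline{\alpha^{*}}$ to $\varphi(\overline{\mathcal{H}_{\overline{\rho}_K}}^i)$ then yields the desired homeomorphism.

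The steps are all essentially formal once one has the transitivity of $\Aut(\mathbb{F}_l)$ on index-$p$ normal subgroups in hand; the only mild subtlety is matching kernels correctly (i.e.\ choosing $\alpha^{-1}(K)=K'$ rather than $\alpha(K)=K'$) so that Lemma \ref{identifvarphiHrho} applies directly. No new calculations beyond these bookkeeping checks are needed.
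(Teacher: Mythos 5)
Your proof is correct and follows essentially the same route as the paper: pick an automorphism of $\mathbb{F}_l$ transporting $K$ to $K'$ (using the transitivity coming from $\Aut(\mathbb{F}_l)\twoheadrightarrow GL(l,\mathbb{Z})$) and let precomposition induce the homeomorphism on the character variety. Your version is slightly more explicit in one spot where the paper is terse, namely invoking Lemma \ref{identifvarphiHrho} to reconcile $\overline{\rho}_K\circ\alpha$ with $\overline{\rho}_{K'}$ since they need only share a kernel; otherwise the arguments coincide.
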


\begin{proof}

Let $\phi$ be an element in $\Aut(\mathbb{F}_l)$ such that $\phi(K)=K'$. Then the precomposition by $\phi^{-1}$ provides a homeomorphism from $\iota(\mathcal{H}_{\overline{\rho}_K})$ to $\iota(\mathcal{H}_{\overline{\rho}_{K'}})$ whose continuous inverse is the precomposition by $\phi$. This homeomorphism induces, on the character variety,  an homeomorphism between $\varphi(\overline{\mathcal{H}_{\overline{\rho}_K}}^i)$ and $\varphi(\overline{\mathcal{H}_{\overline{\rho}_{K'}}}^i)$.
   \end{proof}

\begin{remark}\label{freegrponeK}
This proposition implies that we only need to focus on one single pseudo-component. Here we are going to explain, how to construct a particular normal subgroup of index $p$ in $\mathbb{F}_l$ using topological coverings. What we are mostly interested in is the expression of the generators of $K$ as words in the generators of $\mathbb{F}_l$. We begin with a disk $D$ and remove from this disk a smaller disk with the same center. Then we remove along a radius another $l-1$ holes. After a rotation of the disk by an angle of $\frac{2\pi}{p}$, we remove again along a radius another $l-1$ holes. Doing this $p$ times we construct a surface $Y$ with $(l-1)p+1$ holes. Since, by construction, this surface is invariant by a rotation $r$ of order $p$ we may quotient out $Y$ by $\langle r\rangle$ to construct a new surface $X$ with exactly $l$ holes. See  Figure \ref{pict1}.

Define $\psi:Y\rightarrow X$ the projection identifying points in $Y$ modulo $\langle r\rangle$. This is a Galois cover of $X$ of order $p$. Therefore, $K:=\pi_1(Y)$ is a normal subgroup of index $p$ in $\pi_1(X)$. Since $X$ is a disk with $l$ holes, the group $\pi_1(X)$ is freely generated by  loops around each hole, we will denote them $x_1,\dots, x_{l}$.

On the other hand, a system of free generators for $K$ is given by loops around the $(l-1)p+1$ holes which can be identified to $x_1^p$ and $x_1^{i}x_jx_1^{-i}$ with $0\leq i\leq p-1$ and $2\leq j\leq l$ (see  Figure \ref{pict1}).    

\begin{figure}
\centering

\begin{tikzpicture}[scale=0.9]

\draw[ultra thick] (0,7) ellipse (6 and 2.7);
\draw[ultra thick] (0,7) ellipse (1.2 and 0.5);
\draw[ultra thick](2,7) ellipse (0.3 and 0.175);
\draw[ultra thick](5.5,7) ellipse (0.3 and 0.175);
\draw[ultra thick](1.5,7.75) ellipse (0.3 and 0.175);
\draw[ultra thick](3.5,8.75) ellipse (0.3 and 0.175);
\path [fill=gray, even odd rule, fill opacity = 0.1] (0,7) ellipse(6 and 2.7) (0,7) ellipse (1.2 and 0.5) (2,7) ellipse(0.3 and 0.175) (5.5,7) ellipse(0.3 and 0.175) (1.5,7.75) ellipse(0.3 and 0.175) (3.5,8.75) ellipse (0.3 and 0.175);;

\node at (0,7.5){$\blacktriangleleft$};
\node at (2,7.175){$\blacktriangleleft$};
\node at (5.5,7.175){$\blacktriangleleft$};
\node at (1.5,7.925){$\blacktriangleleft$};
\node at (3.5,8.925){$\blacktriangleleft$};
\draw[thick,loosely dotted](2.4,7)--(5.1,7);
\draw[thick,loosely dotted](1.7,7.85)--(3.3,8.65);

\node at (3.75,6.6) [xscale=1,yscale=1,rotate=0] {$\underbrace{\quad\quad\quad\quad\quad\quad\quad\quad\quad}$};
\node at (3.75,6.2){$l-1$ holes};

\node at (2.1,8.55) [xscale=1,yscale=1,rotate=-152] {$\underbrace{\quad\quad\quad\quad\quad\quad\quad}$};
\node at (2,8.85)[rotate=28]{$l-1$ holes};

\node at (-1.2,7.45) {$x_1^p$};
\node at (2.45,7.2) {$x_2$};
\node at (5.1,7.2) {$x_l$};
\node at (2.6,7.75) {$x_1x_2x_1^{-1}$};
\node at (3.7,8.4) {$x_1x_lx_1^{-1}$};

\draw[thick,->] (6.15,7).. controls (6,7.8) and (5.8,8) .. (5.28,8.5) .. controls (4.5,9) .. (4.2,9.1);
\node[text width=2cm] at (4.2,9.8) { rotation of angle $\frac{2\pi}{p}$};

\draw[ultra thick,dotted,->](1,7.9)..controls (0,8.2) and (-2.5,7.8) .. (-2.5,7)..controls (-2.5,6.2) and (0,5.8) .. (1.5,6.2);
\node[text width=1.1cm] at (-3,7){$p$ lines of holes};

\node at (-4.2,9.8)[xscale=2,yscale=2,rotate=0]{Surface $Y$};

\draw[ultra thick] (0,7) --(0,6.5);
\draw[ultra thick,dash phase=1mm,dash pattern=on 1mm off 1mm] (0,6.5) -- (0,4.3);
\draw[ultra thick,->>] (0,4.3)--(0,-1);

\node at (1.2,3){projection $\psi$};

\node at (-4.2,1.8)[xscale=2,yscale=2,rotate=0]{Surface $X$};

\draw[ultra thick] (0,-1) ellipse (6 and 2.7);
\draw[ultra thick] (0,-1) ellipse (1.2 and 0.5);
\draw[ultra thick](2,-1) ellipse (0.3 and 0.175);
\draw[ultra thick](5.5,-1) ellipse (0.3 and 0.175);
\path [fill=gray, even odd rule, fill opacity = 0.1] (0,-1) ellipse(6 and 2.7) (0,-1) ellipse (1.2 and 0.5) (2,-1) ellipse(0.3 and 0.175) (5.5,-1) ellipse(0.3 and 0.175);;

\node at (0,-1.5){$\blacktriangleright$};
\node at (2,-0.825){$\blacktriangleleft$};
\node at (5.5,-0.825){$\blacktriangleleft$};
\draw[thick,loosely dotted](2.4,-1)--(5.1,-1);

\node at (3.75,-1.4) [xscale=1,yscale=1,rotate=0] {$\underbrace{\quad\quad\quad\quad\quad\quad\quad\quad\quad}$};
\node at (3.75,-1.8){$l-1$ holes};

\node at (-1.2,-1.45) {$x_1$};
\node at (2.45,-0.8) {$x_2$};
\node at (5.1,-0.8) {$x_l$};

\end{tikzpicture}
\caption{Construction of a normal subgroup of index $p$ in a free group over $l$ generators} \label{pict1}
\end{figure}
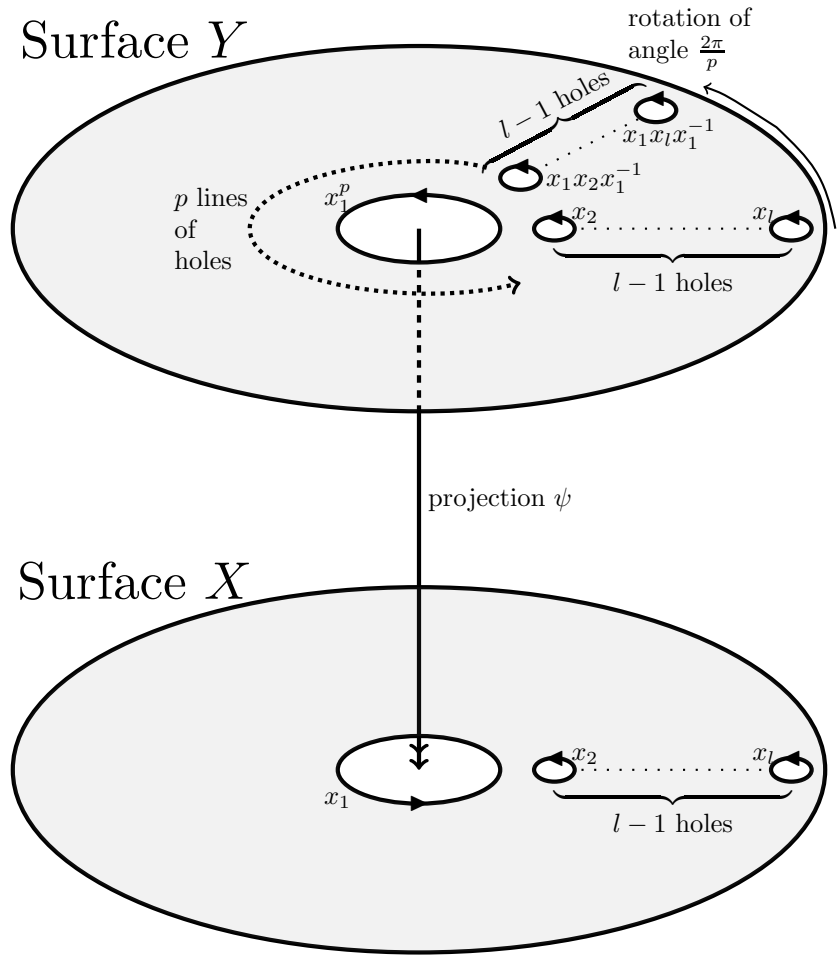

\end{remark}

Since $\langle\overline{M_c}\rangle$ acts by conjugation on $\overline{D}$, it also acts on $\overline{D}^k$ by simultaneously acting on each coordinates. The next proposition relies on the results of Section \ref{diagbyfin}, especially the explicit computations of cohomology groups.

\begin{proposition}\label{Hrhofreegroup}

Let $l\geq 2$, $p$ be a prime number. Then any pseudo-component in $\chi^i_{Sing}(\mathbb{F}_l,PSL(p,\mathbb{C}))$ is homeomorphic to $\left(\overline{D}^{l-1}-\{(\overline{I_p},\dots,\overline{I_p})\}\right)/ \langle \overline{M_c}\rangle$.

\end{proposition}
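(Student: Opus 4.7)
The plan is to apply in succession the general machinery developed in Sections \ref{diagbyfin} and \ref{secinj}, and then to explicitly compute the resulting cohomology group using the presentation of $K$ described in Remark \ref{freegrponeK}.

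I would first use Proposition \ref{onesinglphiKfree} to reduce to the case of the specific normal subgroup $K$ of index $p$ constructed in Remark \ref{freegrponeK}, with $\gamma_0 := x_1$ which maps onto $\overline{M_c}$ under $\overline{\rho}_K$. Combining Theorem \ref{homeobarHrho}, Proposition \ref{barHrho}, and Remark \ref{Hirred}, the pseudo-component $\varphi(\overline{\mathcal{H}_{\overline{\rho}_K}}^i)$ becomes homeomorphic to the quotient of $H^1(\mathbb{F}_l, \overline{D}_{\overline{\rho}_K})$ by the coefficient action of $\langle \overline{M_c}\rangle$, with the trivial cohomology class removed, since Remark \ref{Hirred} characterizes irreducible representations in $\mathcal{H}_{\overline{\rho}_K}$ as precisely those whose associated $1$-cocycle is not a coboundary.

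The core of the argument will then be the explicit description of $H^1(\mathbb{F}_l, \overline{D}_{\overline{\rho}_K})$ via Proposition \ref{Transgr}. Recall from Remark \ref{freegrponeK} that $K$ is freely generated by $a := x_1^p$ together with $y_i^j := x_1^i x_j x_1^{-i}$ for $0 \leq i \leq p-1$ and $2 \leq j \leq l$. An arbitrary $f \in \Hom(K, \overline{D})$ is thus prescribed by its values on these generators. The normalization $f(x_1^p) = \overline{I_p}$ reads $f(a) = \overline{I_p}$; the equivariance condition $f(x_1 \gamma x_1^{-1}) = \overline{M_c} \cdot f(\gamma)$ applied to $\gamma = y_i^j$ with $0 \leq i \leq p-2$ produces the recursion $f(y_{i+1}^j) = \overline{M_c} \cdot f(y_i^j)$, while the case $i = p-1$ yields $f(a y_0^j a^{-1}) = \overline{M_c} \cdot f(y_{p-1}^j)$. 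Using the abelianness of $\overline{D}$ together with $f(a) = \overline{I_p}$, the left-hand side of the latter collapses to $f(y_0^j)$, and iterating the recursion reduces the right-hand side to $\overline{M_c}^p \cdot f(y_0^j) = f(y_0^j)$, so the relation is automatic. This leaves $(f(x_2), \ldots, f(x_l)) \in \overline{D}^{l-1}$ as the only free parameters, giving a homeomorphism $H^1(\mathbb{F}_l, \overline{D}_{\overline{\rho}_K}) \cong \overline{D}^{l-1}$.

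Under this identification, the trivial cohomology class corresponds to $(\overline{I_p}, \ldots, \overline{I_p})$, so removing it yields precisely the irreducible locus. To conclude I will verify that the coefficient action of $\langle \overline{M_c}\rangle$ transports to simultaneous conjugation on $\overline{D}^{l-1}$: if $f$ corresponds to $(d_2, \ldots, d_l)$, then the transformed cocycle $\gamma \mapsto \overline{M_c} f(\gamma) \overline{M_c}^{-1}$ corresponds to $(\overline{M_c} d_2 \overline{M_c}^{-1}, \ldots, \overline{M_c} d_l \overline{M_c}^{-1})$. The main technical point is the verification that the closure relation coming from $i = p-1$ is automatic, which is what forces the cohomology group to have the expected topological dimension $(p-1)(l-1)$; all other steps are a direct application of the earlier propositions combined with the explicit presentation of $K$.
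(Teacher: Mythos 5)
Your proof is correct and follows essentially the same route as the paper: reduce to a single pseudo-component via Proposition \ref{onesinglphiKfree}, apply Theorem \ref{homeobarHrho}, Proposition \ref{barHrho}, Remark \ref{Hirred} and Proposition \ref{Transgr}, and then identify $H^1(\mathbb{F}_l,\overline{D}_{\overline{\rho}_{K}})$ with $\overline{D}^{l-1}$ using the free generating set of $K$ from Remark \ref{freegrponeK}, tracking the $\langle\overline{M_c}\rangle$-action to the coordinatewise conjugation action. The only thing you make explicit that the paper leaves implicit is the check that the $i=p-1$ instance of the equivariance constraint is automatic (using the abelianness of $\overline{D}$, the normalization $f(x_1^p)=\overline{I_p}$, and $\overline{M_c}^p=\overline{I_p}$) — a useful sanity check rather than a different argument.
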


\begin{proof}

Using Proposition \ref{onesinglphiKfree}, we only construct the homeomorphism for a unique pseudo-component.  Let $x_1,\dots,x_l$ be  a system of free generators for $\mathbb{F}_l$. Then Remark \ref{freegrponeK} justifies that the subgroup $K_0$ generated by $X:=x_1^p$ and $Y_{i,j}:=x_1^{i}x_jx_1^{-i}$  with $0\leq i\leq p-1$ and $2\leq j\leq l$ is freely generated by these generators and is a normal subgroup of index $p$ in $\mathbb{F}_l$.

According to Theorem \ref{homeobarHrho}, $\varphi(\overline{\mathcal{H}_{\overline{\rho}_{K_0}}}^i)$ is homeomorphic to $\overline{\mathcal{H}_{\overline{\rho}_{K_0}}}^i$.  By Proposition \ref{barHrho}, we need to compute $H^1(\mathbb{F}_l,\overline{D}_{\overline{\rho}_{K_0}})$.  Following the notation of Proposition \ref{Transgr}, we choose $\gamma_0:=x_1$. This proposition states that there is an homeomorphism between

$$H^1(\mathbb{F}_l,\overline{D}_{\overline{\rho}_{K_0}})\text{ and } \left\lbrace f\in \Hom(K_0,\overline{D})\left|\begin{array}{l}f(x_1^p)=\overline{I_p}\\f(x_1\gamma x_1^{-1})=\overline{M_c}\cdot f(\gamma)\text{, } \forall \gamma\in K_0\end{array}\right. \right\rbrace\text{.}$$

 Identifying  $f$ to the tuples $(f(X),(f(Y_{i,j})))$ of  images  of the free generators of $K_0$, $\Hom(K_0,\overline{D})$ is equal to $\overline{D}^{1+p(l-1)}$.  Furthermore, $f(x_1^p)=\overline{I_p}$ means that $f(X)=\overline{I_p}$. Whereas  $f(x_1\gamma x_1^{-1})=\overline{M_c}\cdot f(\gamma)$, for all  $\gamma\in K_0$ is equivalent to $f(Y_{i+1,j})=\overline{M_c}\cdot f(Y_{i,j})$ for $j=2,\dots,l$ and $i=0,\dots,p-2$.

Therefore, the map sending $f$ to $(f(Y_{0,2}),\dots,f(Y_{0,l}))$ defines a natural identification between the space above and $\overline{D}^{l-1}$.  As a result, there is an homeomorphism between $H^1(\Gamma,\overline{D}_{\overline{\rho}_{K_0}})$ and $\overline{D}^{l-1}$. This leads to an homeomorphism between $H^1(\Gamma,\overline{D}_{\overline{\rho}_{K_0}})/\langle \overline{M_c}\rangle$ and $\overline{D}^{l-1}/\langle \overline{M_c}\rangle$. By Proposition \ref{barHrho}, $\overline{\mathcal{H}_{\overline{\rho}_{K_0}}}$ is homeomorphic to $\overline{D}^{l-1}/\langle \overline{M_c}\rangle$.  Finally, in Remark \ref{Hirred}, we have seen that $\overline{\mathcal{H}_{\overline{\rho}_{K_0}}}$ has only one point which is a conjugacy class of  non-irreducible representations. It is associated to the zero element in $H^1(\Gamma,\overline{D}_{\overline{\rho}_{K_0}})/\langle \overline{M_c}\rangle$. As a result,  $\varphi(\overline{\mathcal{H}_{\overline{\rho}_{K_0}}}^i)$ is homeomorphic to $\left(\overline{D}^{l-1}-\{(\overline{I_p},\dots,\overline{I_p})\}\right)/ \langle \overline{M_c}\rangle$. \end{proof}

\begin{corollary}\label{orblocfree}

Let $l\geq 2$ and $p$ be a prime number. Then the singular locus of $\chi^i(\mathbb{F}_l,PSL(p,\mathbb{C}))$ is connected of dimension $(p-1)(l-1)$. 

\end{corollary}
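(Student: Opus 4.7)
The plan is to combine the decomposition of $\chi^i_{Sing}(\mathbb{F}_l,PSL(p,\mathbb{C}))$ into pseudo-components (Equation \ref{decompbadlocus2}) with the explicit homeomorphism description of each pseudo-component (Proposition \ref{Hrhofreegroup}) and the intersection combinatorics (Theorem \ref{combi}). Dimension is a local invariant, so it suffices to compute it on a single pseudo-component; connectedness follows by a standard ``union of pairwise intersecting connected sets'' argument.

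First, I would set up the dimension computation. By Proposition \ref{Hrhofreegroup}, every pseudo-component is homeomorphic to $(\overline{D}^{l-1}\setminus\{(\overline{I_p},\dots,\overline{I_p})\})/\langle \overline{M_c}\rangle$. Since $D\subset SL(p,\mathbb{C})$ is the maximal torus of diagonal matrices, it is a complex algebraic torus of dimension $p-1$, and its projection $\overline{D}\subset PSL(p,\mathbb{C})$ has the same dimension because $\pi$ has finite (central) kernel. Hence $\overline{D}^{l-1}$ has complex dimension $(p-1)(l-1)$; removing one point does not change the dimension, and taking the quotient by the finite group $\langle \overline{M_c}\rangle$ also preserves dimension. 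This shows each pseudo-component, and therefore the whole singular locus, has dimension $(p-1)(l-1)$.

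For connectedness I would argue as follows. Since $l\geq 2$ and $p\geq 2$, we have $(p-1)(l-1)\geq 1$, so $\overline{D}^{l-1}$ is a connected complex manifold of positive dimension. Removing a single point keeps it connected, and quotienting by the finite group $\langle \overline{M_c}\rangle$ preserves connectedness, so each pseudo-component $\varphi(\overline{\mathcal{H}_{\overline{\rho}_K}}^i)$ is connected. By Theorem \ref{combi}(2), any two distinct pseudo-components have non-empty (in fact, $(p-1)$-element) intersection. A finite family of connected subspaces whose pairwise intersections are all non-empty has a connected union (the intersection graph is complete, hence connected, which is the classical criterion). Therefore the union $\chi^i_{Sing}(\mathbb{F}_l,PSL(p,\mathbb{C}))$ is connected.

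I don't anticipate any real obstacle here, since both ingredients (the model of a pseudo-component and the pairwise intersection statement) are already established earlier in the paper. The only minor point to be careful about is the dimension of $\overline{D}$: one has to note that $\pi:D\to\overline{D}$ has finite kernel and is surjective onto its image, so $\dim\overline{D}=\dim D=p-1$; this is essentially immediate but is the only step where one leaves the purely topological/combinatorial framework.
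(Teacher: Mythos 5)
Your proof is correct and follows essentially the same approach as the paper's own argument: both derive connectedness from the pairwise non-empty intersections supplied by Theorem~\ref{combi} together with the connectivity of each pseudo-component via Proposition~\ref{Hrhofreegroup}, and both read off the dimension $(p-1)(l-1)$ from the same model $(\overline{D}^{l-1}\setminus\{\text{pt}\})/\langle\overline{M_c}\rangle$. You merely spell out a few routine details (that $\dim\overline{D}=p-1$, and that removing a point from a manifold of real dimension $\geq 2$ and passing to a finite quotient both preserve connectedness and dimension) which the paper leaves implicit.
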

\begin{proof}

Equation \ref{decompbadlocus2} implies that the singular locus  $\chi^i_{Sing}(\mathbb{F}_l,PSL(p,\mathbb{C}))$ is a union of pseudo-components. By Theorem \ref{combi}, any two pseudo-components intersect. Since they are all connected by Proposition \ref{Hrhofreegroup}, the union itself is connected.

Proposition \ref{Hrhofreegroup} also implies that the   dimension of a pseudo-component is $(p-1)(l-1)$. Since the intersection between any two pseudo-components is a finite number of point and we have a finite number of these sets, the topological dimension of $\chi^i_{Sing}(\mathbb{F}_l,PSL(p,\mathbb{C}))$ is also $(p-1)(l-1)$.   \end{proof}

\begin{remark}\label{irredcompfree}

Since $\left(\overline{D}^{l-1}-\{(\overline{I_p},\dots,\overline{I_p})\}\right)/ \langle \overline{M_c}\rangle$ is  irreducible for the Zariski topology, the correspondence given in Proposition \ref{Hrhofreegroup} implies that the pseudo-components  are the irreducible components of $\chi^i_{Sing}(\mathbb{F}_l,PSL(p,\mathbb{C}))$.\end{remark}

\section{The closed surface group case}\label{surfgrpcase}

Let   $\Sigma_g$ be a closed  surface of genus $g\geq 2$. We are studying the singular locus of the character variety of the fundamental group of $\pi_1(\Sigma_g)$ in $PSL(p,\mathbb{C})$. Basically, our study will follow the lines of the free group case. From the Dehn-Nielsen-Baer theorem (see \cite{F-M} theorem 8.1), we know that the orientation preserving mapping class group of a surface is a subgroup of index $2$ in $\Out(\pi_1(\Sigma_g))$. We denote  $\Out^+(\pi_1(\Sigma_g))$ this subgroup of index $2$ and $\Aut^+(\pi_1(\Sigma_g))$ the corresponding subgroup of automorphisms. 

The set of normal subgroups $K$ of index $p$   in $\pi_1(\Sigma_g)$ is in bijective correspondence with the set of subgroups of index $p$ in $\pi_1(\Sigma_g)^{Ab}=\mathbb{Z}^{2g}$. Furthermore, it is known that $Sp(2g,\mathbb{Z})$ acts transitively on these subgroups. Since the canonical map from $\Out^+(\pi_1(\Sigma_g))$ on $Sp(2g,\mathbb{Z})$ is known to be surjective (c.f. \cite{F-M}, theorem 6.4), it follows that $\Aut^+(\pi_1(\Sigma_g))$ acts transitively on the set of normal subgroups of index $p$ in $\pi_1(\Sigma_g)$. Using this, we have :

\begin{proposition}\label{onesinglphiKsurfgrp}

Let $g\geq 2$ and $K,K'$ be two normal subgroups of index $p$ in $\pi_1(\Sigma_g)$. Then, there exists an automorphism $\phi$ in $\Aut^+(\pi_1(\Sigma_g))$ such that the precomposition by $\phi$ induces an homeomorphism between $\varphi(\overline{\mathcal{H}_{\overline{\rho}_K}}^i)$ and $\varphi(\overline{\mathcal{H}_{\overline{\rho}_{K'}}}^i)$.
\end{proposition}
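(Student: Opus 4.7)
The plan is to imitate, essentially verbatim, the proof of Proposition \ref{onesinglphiKfree} from the free group case, since the paragraph preceding the statement has already assembled all the nontrivial ingredients. The key input is the combination of (i) the bijective correspondence between normal subgroups of index $p$ in $\pi_1(\Sigma_g)$ and index $p$ subgroups of $\mathbb{Z}^{2g}$, (ii) transitivity of the $Sp(2g,\mathbb{Z})$-action on the latter, and (iii) surjectivity of the map $\Out^+(\pi_1(\Sigma_g))\twoheadrightarrow Sp(2g,\mathbb{Z})$ (Theorem 6.4 in \cite{F-M}). These yield an element $\phi\in\Aut^+(\pi_1(\Sigma_g))$ with $\phi(K)=K'$, which is the $\phi$ of the statement.

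With $\phi$ in hand, the candidate map is precomposition by $\phi^{-1}$, which defines a self-homeomorphism of $\Hom(\pi_1(\Sigma_g),\overline{D}\rtimes\langle \overline{M_c}\rangle)$ whose continuous inverse is precomposition by $\phi$. For any $\rho\in\mathcal{H}_{\overline{\rho}_K}$, the kernel of $q\circ(\rho\circ\phi^{-1})$ equals $\phi(\Ker(q\circ\rho))=\phi(K)=K'$, so $\rho\circ\phi^{-1}$ lies in some $\mathcal{H}_{\overline{\rho}'}$ with $\Ker(\overline{\rho}')=K'$. By Lemma \ref{identifvarphiHrho}, this forces $\varphi(\overline{\mathcal{H}_{\overline{\rho}'}}^i)=\varphi(\overline{\mathcal{H}_{\overline{\rho}_{K'}}}^i)$, so the choice of representative $\overline{\rho}_{K'}$ is immaterial.

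Irreducibility is preserved by precomposition with an automorphism (the image is unchanged), so precomposition by $\phi^{-1}$ restricts to a homeomorphism $\mathcal{H}_{\overline{\rho}_K}^i\to\mathcal{H}_{\overline{\rho}'}^i$. Since it commutes with the conjugation action of $\overline{D}\rtimes\langle\overline{M_c}\rangle$ on the target (conjugation acts on values, precomposition on the source), it descends to a homeomorphism $\overline{\mathcal{H}_{\overline{\rho}_K}}^i\to\overline{\mathcal{H}_{\overline{\rho}_{K'}}}^i$. Applying Theorem \ref{homeobarHrho}, which identifies each $\overline{\mathcal{H}_{\overline{\rho}}}^i$ with its image under $\varphi$, we conclude that the induced map $\varphi(\overline{\mathcal{H}_{\overline{\rho}_K}}^i)\to\varphi(\overline{\mathcal{H}_{\overline{\rho}_{K'}}}^i)$ is a homeomorphism.

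There is really no obstacle: the substantive work has been discharged by the cited results of Farb--Margalit and by Lemma \ref{identifvarphiHrho} and Theorem \ref{homeobarHrho}. The one point worth double-checking in the write-up is that one must use $\Aut^+$ rather than merely $\Out^+$, which is why the paragraph before the statement phrases the lift as an honest automorphism; and that one must invoke Lemma \ref{identifvarphiHrho} to absorb the possible mismatch between $q\circ\rho\circ\phi^{-1}$ and the pre-chosen representative $\overline{\rho}_{K'}$.
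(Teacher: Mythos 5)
Your proposal is correct and follows essentially the same route as the paper, which simply cites the proof of Proposition \ref{onesinglphiKfree}: lift a symplectic transformation sending $\Phi(K)$ to $\Phi(K')$ to some $\phi\in\Aut^+(\pi_1(\Sigma_g))$ and transport by precomposition with $\phi^{-1}$. Your explicit invocation of Lemma \ref{identifvarphiHrho} to reconcile $\overline{\rho}_K\circ\phi^{-1}$ with the chosen representative $\overline{\rho}_{K'}$ is a careful point that the paper's terse proof leaves implicit, but it is exactly the right thing to say.
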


\begin{proof}
The same proof as for Proposition \ref{onesinglphiKfree}.   \end{proof}

\begin{remark}\label{surfgrponeK}
This proposition implies that we only need to focus on one single pseudo-component. Let $a_1,b_1$,\dots,$a_g,b_g$ be the standard generators of $\pi_1(\Sigma_g)$ :
$$\pi_1(\Sigma_g)=\langle a_1,b_1,\dots,a_g,b_g\mid \prod_{i=1}^g[a_i,b_i]=1\rangle\text{.}$$
Then define $K$ to be the group generated $a_1^p$, $b_1$, $a_1^{i}a_ja_1^{-i}$, $a_1^{i}b_ja_1^{-i}$ for $0\leq i\leq p-1$ and $2\leq j\leq g$. Replacing the disks in Figure \ref{pict1} by tori,  one may identify $K$ to the fundamental group of a $p$-fold Galois cover of $\Sigma_g$. As a result, $K$ is a normal subgroup of index $p$ in  $\pi_1(\Sigma_g)$. Furthermore $K$ is the fundamental group of a closed topological surface of genus $1+(g-1)p$ and $\pi_1(K)^{Ab}$ is isomorphic to $\mathbb{Z}^{2+2(g-1)p}$ freely generated by the images of $a_1^p$, $b_1$, $a_1^{i}a_ja_1^{-i}$, $a_1^{i}b_ja_1^{-i}$ for $0\leq i\leq p-1$ and $2\leq j\leq g$. 

\end{remark}

\begin{proposition}\label{Hrhosurfgroup}

Let $g\geq 2$, $\Sigma_g$ be a closed surface of genus $g$ and  $p$ be a prime number. Then any pseudo-component in $\chi^i_{Sing}(\pi_1(\Sigma_g),PSL(p,\mathbb{C}))$ is homeomorphic to  $\left(\langle \overline{D(\xi)}\rangle\times \overline{D}^{2(g-1)}-\{(\overline{I_p},\dots,\overline{I_p})\}\right)/ \langle \overline{M_c}\rangle$.

\end{proposition}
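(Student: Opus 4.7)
The plan is to adapt the argument of Proposition \ref{Hrhofreegroup} to the surface group case. By Proposition \ref{onesinglphiKsurfgrp}, it suffices to treat one pseudo-component, so I would choose $K$ as in Remark \ref{surfgrponeK} with $\gamma_0 := a_1$. Combining Theorem \ref{homeobarHrho}, Proposition \ref{barHrho} and Proposition \ref{Transgr}, the task reduces to describing
$$\Ker(T) = \left\{ f \in \Hom(K, \overline{D}) \;\middle|\; f(a_1^p) = \overline{I_p},\; f(a_1 \gamma a_1^{-1}) = \overline{M_c} \cdot f(\gamma) \text{ for all } \gamma \in K \right\},$$
then quotienting by $\langle \overline{M_c}\rangle$ and removing the unique trivial coboundary class (which, by Remark \ref{Hirred}, corresponds to the non-irreducible representation in $\overline{\mathcal{H}_{\overline{\rho}_K}}$).

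Since $K$ is the fundamental group of a closed surface of genus $1+(g-1)p$, the abelianization $K^{Ab}$ is free of rank $2+2(g-1)p$, generated by the classes of $X := a_1^p$, $B := b_1$, $A_{i,j} := a_1^i a_j a_1^{-i}$ and $B_{i,j} := a_1^i b_j a_1^{-i}$ for $0 \leq i \leq p-1$, $2 \leq j \leq g$. As $\overline{D}$ is abelian, $f$ factors through $K^{Ab}$, so $\Hom(K,\overline{D}) = \overline{D}^{2+2(g-1)p}$. The condition $f(X) = \overline{I_p}$ eliminates one factor; the cocycle relations applied to $\{A_{i,j}\}$ (and similarly $\{B_{i,j}\}$) work exactly as in the free group case, forcing $f(A_{i,j}) = \overline{M_c}^i \cdot f(A_{0,j})$ and leaving one free parameter in $\overline{D}$ for each $j \in \{2,\dots,g\}$, with the wrap-around at $i = p-1$ being automatic in $K^{Ab}$. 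Together this accounts for $2(g-1)$ free $\overline{D}$-factors.

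The key new step, and the main obstacle, is the constraint on $B = b_1$. Using the defining relation $\prod_{i=1}^g [a_i, b_i] = 1$ of $\pi_1(\Sigma_g)$, I would compute
$$a_1 b_1 a_1^{-1} = [a_1, b_1] \cdot b_1 = \left(\prod_{i=2}^g [a_i, b_i]\right)^{-1} \cdot b_1,$$
and observe that since $a_i, b_i \in K$ for $i \geq 2$, the product $\prod_{i=2}^g [a_i, b_i]$ lies in $[K,K]$, so that $a_1 b_1 a_1^{-1} \equiv b_1$ in $K^{Ab}$. The constraint therefore becomes $f(b_1) = \overline{M_c} \cdot f(b_1)$, forcing $f(b_1)$ into the fixed subgroup of the conjugation action of $\overline{M_c}$ on $\overline{D}$. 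A direct computation, exploiting that conjugation by $M_c$ cyclically permutes the diagonal entries of an element of $D$ (up to a central scalar), identifies this fixed subgroup with $\langle \overline{D(\xi)} \rangle$. Assembling everything, $\Ker(T)$ is naturally homeomorphic to $\langle \overline{D(\xi)}\rangle \times \overline{D}^{2(g-1)}$; taking the quotient by $\langle \overline{M_c}\rangle$ and removing the trivial class yields the claimed description of the pseudo-component.
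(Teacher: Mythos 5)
Your proposal is correct and follows essentially the same route as the paper: reduce to one pseudo-component, identify $\overline{\mathcal{H}_{\overline{\rho}_K}}$ with $\Ker(T)/\langle\overline{M_c}\rangle$ via Propositions \ref{barHrho} and \ref{Transgr}, read off the free $\overline{D}$-parameters from the conjugate generators $A_{i,j}, B_{i,j}$, and use the surface relation to show the constraint on $b_1$ forces $f(b_1)$ into the $\overline{M_c}$-fixed subgroup $\langle\overline{D(\xi)}\rangle$. Your direct computation of $a_1 b_1 a_1^{-1} = [a_1,b_1]b_1$ is a slightly cleaner rendering of the same identity the paper uses (it computes $a_1^{-1} b_1 a_1$ instead), and your explicit check that the $i=p-1$ wrap-around is automatic is a detail the paper leaves implicit, but these are cosmetic differences rather than a different argument.
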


\begin{proof}

Using Proposition \ref{onesinglphiKsurfgrp}, we only construct the homeomorphism for a unique pseudo-component. Let $a_1,b_1$,\dots,$a_g,b_g$ be the standard generators of $\pi_1(\Sigma_g)$. Then, Remark \ref{surfgrponeK} justifies that the subgroup $K_0$ generated by $A:=a_1^p$, $B:=b_1$, $A_{i,j}:=a_1^{i}a_ja_1^{-i}$ and $B_{i,j}:=a_1^{i}b_ja_1^{-i}$    with $0\leq i\leq p-1$ and $2\leq j\leq g$ is  a normal subgroup of index $p$ in $\pi_1(\Sigma_g)$. Furthermore,  its abelianization is freely generated by the images of $A$, $B$, $A_{i,j}$ and $B_{i,j}$.

According to Theorem \ref{homeobarHrho}, $\varphi(\overline{\mathcal{H}_{\overline{\rho}_{K_0}}}^i)$ is homeomorphic to $\overline{\mathcal{H}_{\overline{\rho}_{K_0}}}^i$.  By Proposition \ref{barHrho}, we need to compute $H^1(\pi_1(\Sigma_g),\overline{D}_{\overline{\rho}_{K_0}})$.  Following the notation of Proposition \ref{Transgr}, we choose $\gamma_0:=a_1$. This proposition states that there is an homeomorphism between $H^1(\pi_1(\Sigma_g),\overline{D}_{\overline{\rho}_{K_0}})\text{ and }$
$$ \left\lbrace f\in \Hom(K_0,\overline{D})\left|\begin{array}{l}f(a_1^p)=\overline{I_p}\\f(a_1\gamma a_1^{-1})=\overline{M_c}\cdot f(\gamma)\text{, } \forall \gamma\in K_0\end{array}\right. \right\rbrace\text{.}$$

  Identifying  $f$ to the tuples of images $(f(X),f(Y),(f(X_{i,j})),f(Y_{i,j}))$ of the free generators of $K_0^{Ab}$, $\Hom(K_0,\overline{D})$ is equal to $\overline{D}^{1+p(g-1)}$.   $f(a_1^p)=\overline{I_p}$ means that $f(A)=\overline{I_p}$. Remark that 
$$a_1^{-1}Ba_1=a_1^{-1}b_1a_1=b_1(a_1b_1)^{-1}[b_1,a_1](a_1b_1)=b_1(a_1b_1)^{-1}\prod_{j=2}^g[a_i,b_i]a_1b_1\text{.}$$
Therefore  $f(a_1Ba_1^{-1})=f(B)$. Whence $f(a_1\gamma a_1^{-1})=\overline{M_c}\cdot f(\gamma)$, for all  $\gamma\in K_0$ is equivalent to $f(B)=f(a_1Ba_1^{-1})=\overline{M_c}\cdot f(B)$, $f(A_{i+1,j})=\overline{M_c}\cdot f(A_{i,j})$ and $f(B_{i+1,j})=\overline{M_c}\cdot f(B_{i,j})$ for $j=2,\dots,l$ and $i=0,\dots,p-2$.

Thus, the map sending $f$ to $(f(B),f(A_{0,2}),f(B_{0,2}),\dots,f(A_{0,g}),f(B_{0,g}))$ defines a natural identification between the space above and $\langle \overline{D(\xi)}\rangle\times\overline{D}^{2(g-1)}$.  As a result, there is an homeomorphism between $H^1(\Gamma,\overline{D}_{\overline{\rho}_{K_0}})$ and $\langle \overline{D(\xi)}\rangle\times\overline{D}^{2(g-1)}$.

The end of the proof is similar to the free group case (Proposition \ref{Hrhofreegroup}). \end{proof}

Unlike the free group case,     neither the pseudo-components nor the singular locus are connected. To have a more refined statement, we introduce an invariant. Let $g\geq 2$ and $\Sigma_g$ be a closed   surface of genus $g$  whose fundamental group is generated by  $a_1,b_1,\dots,a_g,b_g$ verifying one single relation $[a_1,b_1]\cdots[a_g,b_g]=1$.

If $\rho$ is a representation from $\pi_1(\Sigma_g)$ to $PSL(p,\mathbb{C})$, we arbitrarily choose  for each $\gamma\in \pi_1(\Sigma_g)$, $\hat{\rho}(\gamma)\in SL(p,\mathbb{C})$ such that $\overline{\hat{\rho}(\gamma)}=\rho(\gamma)$. Since $\rho$ is a representation in $PSL(p,\mathbb{C})$ we have that :
$$e(\rho):=\prod_{i=1}^g[\hat{\rho}(a_i),\hat{\rho}(b_i)]\in Z(SL(p,\mathbb{C}))\text{.} $$
It is a straightforward verification that $e(\rho)$ only depends on $\rho$ (and not on the chosen lifts). It is called the \textit{Euler invariant}.  It is  invariant by conjugation. We shall see, in the next lemma, that it is also invariant by the action of $\Aut^+(\pi_1(\Sigma_g))$. 

\begin{lemma}\label{mcginv}

Let $g\geq 2$, $\Sigma_g$ be a closed surface of genus $g\geq 2$, $\phi\in \Aut^+(\pi_1(\Sigma_g))$ and $\rho\in \Hom(\pi_1(\Sigma_g),PSL(p,\mathbb{C}))$. Then $e(\rho\circ \phi)=e(\rho)$.

\end{lemma}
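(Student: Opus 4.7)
The plan is to recognise $e(\rho)$ as the image of the fundamental class of $\Sigma_g$ under a map of $H_2$, and then to exploit that elements of $\Aut^+(\pi_1(\Sigma_g))$ fix this class.

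Concretely, let $F$ be the free group on symbols $\tilde a_1,\tilde b_1,\dots,\tilde a_g,\tilde b_g$, let $r:=\prod_{i=1}^g[\tilde a_i,\tilde b_i]$ and let $R\subset F$ be the normal closure of $r$, so that $F/R\cong \pi_1(\Sigma_g)$ via $\tilde a_i\mapsto a_i$, $\tilde b_i\mapsto b_i$. Setting $\sigma(\tilde a_i):=\hat\rho(a_i)$, $\sigma(\tilde b_i):=\hat\rho(b_i)$ extends uniquely, because $F$ is free, to a group morphism $\sigma:F\to SL(p,\mathbb{C})$ which lifts $\rho\circ p$ (where $p:F\to\pi_1(\Sigma_g)$ is the projection). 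In particular $\sigma(R)\subseteq Z(SL(p,\mathbb{C}))$, hence $\sigma([F,R])\subseteq [SL(p,\mathbb{C}),Z(SL(p,\mathbb{C}))]=\{1\}$, so $\sigma$ factors through a morphism $\bar\sigma:R/[F,R]\to Z(SL(p,\mathbb{C}))$, and by the very definition of the Euler invariant, $e(\rho)=\bar\sigma(\bar r)$.

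The five-term exact sequence associated with $1\to R\to F\to\pi_1(\Sigma_g)\to 1$ (or equivalently Hopf's formula, together with the fact that $F^{Ab}\to \pi_1(\Sigma_g)^{Ab}$ is an isomorphism since $r\in [F,F]$) identifies $R/[F,R]$ with $H_2(\pi_1(\Sigma_g),\mathbb{Z})\cong\mathbb{Z}$, the class $\bar r$ being sent to the fundamental class $[\Sigma_g]$. Now for $\phi\in \Aut^+(\pi_1(\Sigma_g))$, pick arbitrary lifts $\tilde\phi(a_i),\tilde\phi(b_i)\in F$ of $\phi(a_i),\phi(b_i)$ and set $s:=\prod_{i=1}^g[\tilde\phi(a_i),\tilde\phi(b_i)]$. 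Then $p(s)=\prod_i[\phi(a_i),\phi(b_i)]=1$, so $s\in R$, and the same centralisation argument gives $e(\rho\circ\phi)=\bar\sigma(\bar s)$ (independently of the chosen lifts $\tilde\phi(a_i)$). Under the identification $R/[F,R]\cong H_2(\pi_1(\Sigma_g),\mathbb{Z})$, the class $\bar s$ is precisely the image $\phi_*[\Sigma_g]$ of the fundamental class by the automorphism $\phi$.

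It therefore suffices to prove $\phi_*[\Sigma_g]=[\Sigma_g]$. Here one invokes the Dehn--Nielsen--Baer theorem recalled before the lemma: an element of $\Aut^+(\pi_1(\Sigma_g))$ is, up to an inner automorphism, induced by an orientation-preserving self-homeomorphism $f$ of $\Sigma_g$, and such an $f$ acts by the identity on $H_2(\Sigma_g,\mathbb{Z})=\mathbb{Z}\cdot[\Sigma_g]$ by definition of \emph{orientation-preserving}; inner automorphisms of $\pi_1(\Sigma_g)$ act trivially on any $H_*(\pi_1(\Sigma_g),-)$ in general. Therefore $\bar s=\bar r$ in $R/[F,R]$ and $e(\rho\circ\phi)=e(\rho)$. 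The main technical point to spell out is the canonical identification of $R/[F,R]$ with $H_2$, together with the compatibility of the induced action of $\phi$ on $R/[F,R]$ with its natural action on $H_2(\pi_1(\Sigma_g),\mathbb{Z})$; both are standard but deserve explicit justification.
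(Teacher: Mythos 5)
Your proof is correct, and it takes a route that is dual to (and more explicit than) the paper's. The paper pulls back the extension class $[z]\in H^2(PSL(p,\mathbb{C}),Z(SL(p,\mathbb{C})))$ of the central extension along $\rho$ to obtain $[\rho^*z]\in H^2(\pi_1(\Sigma_g),Z(SL(p,\mathbb{C})))$, identifies this cohomology group with $Z(SL(p,\mathbb{C}))$ via Poincar\'e duality (the image of $[\rho^*z]$ being $e(\rho)$), and concludes by observing that $\Aut^+(\pi_1(\Sigma_g))$ preserves the generator of $H^2(\pi_1(\Sigma_g),\mathbb{Z})$, hence acts trivially on $H^2$ with trivial coefficients. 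You instead run the homological dual: you lift along a free presentation $1\to R\to F\to\pi_1(\Sigma_g)\to 1$, package $e(\rho)$ as the value $\bar\sigma(\bar r)$ of a well-defined homomorphism $\bar\sigma:R/[F,R]\to Z(SL(p,\mathbb{C}))$, identify $R/[F,R]$ with $H_2(\pi_1(\Sigma_g),\mathbb{Z})$ by Hopf's formula (using $R\subseteq[F,F]$), and show that $\phi$ carries $\bar r$ to $\bar s=\phi_*[\Sigma_g]=[\Sigma_g]$. Both arguments hinge on the same input from Dehn--Nielsen--Baer, namely that $\Aut^+(\pi_1(\Sigma_g))$ fixes the fundamental class (equivalently its Poincar\'e dual). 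What your version buys is concreteness: it bypasses Poincar\'e duality and replaces the paper's asserted computation ``one can check that $[\rho^*z]$ is sent to $e(\rho)$'' with a direct verification on the level of lifts, with $e(\rho)$ visibly a pairing of a $\Hom(H_2,-)$ class against the relator. What the paper's version buys is brevity and a formulation that makes explicit the role of the extension cocycle of $PSL(p,\mathbb{C})$. Both correctly need, and state, that inner automorphisms act trivially on (co)homology so that the mapping-class-group statement applies to $\Aut^+$ and not merely $\Out^+$.
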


\begin{proof}

Let $\phi$ be in $\Aut^+(\pi_1(\Sigma_g))$ and $\rho\in \Hom(\pi_1(\Sigma_g),PSL(p,\mathbb{C}))$. Consider the central exact sequence defining $PSL(p,\mathbb{C})$ :

$$\xymatrix{1\ar[r]&Z(SL(p,\mathbb{C}))\ar[r]&SL(p,\mathbb{C})\ar[r]&PSL(p,\mathbb{C})\ar[r]&1}$$

Let $[z]\in H^2(PSL(p,\mathbb{C}),Z(SL(p,\mathbb{C})))$ representing this exact sequence.   

Then, $[\rho^*z]$ in  $H^2(\pi_1(\Sigma_g),Z(SL(p,\mathbb{C})))$ is invariant by   $Aut^+(\pi_1(\Sigma_g))$ since it leaves invariant the volume form generating $H^2(\Gamma,\mathbb{Z})$. Using   Poincar\'e duality, $H^2(\pi_1(\Sigma_g),Z(SL(p,\mathbb{C})))$ and  $H^0(\pi_1(\Sigma_g),Z(SL(p,\mathbb{C}))^*)=Z(SL(p,\mathbb{C}))^*$ are in  natural dual pairing. It gives an isomorphism between $H^2(\pi_1(\Sigma_g),Z(SL(p,\mathbb{C})))$ and $Z(SL(p,\mathbb{C}))$. One can check that through this isomorphism, $[\rho^*z]$ will be sent to $e(\rho)$. Whence the result. \end{proof}

It leads to the   following topological result.

\begin{corollary}\label{topsurfgrpHrhoK}

Let $g\geq 2$, $\Sigma_g$ be a closed surface of genus $g$, $p$ be a prime number and $K$ be a normal subgroup of index $p$ in $\pi_1(\Sigma_g)$. Then $\varphi(\overline{\mathcal{H}_{\overline{\rho}_{K}}}^i)$ has exactly $p$ connected components which are the fibers over the Euler invariant.  Furthermore

\begin{itemize}
\item $e^{-1}(I_p)\cap \varphi(\overline{\mathcal{H}_{\overline{\rho}_{K}}}^i)$ is homeomorphic to $(\overline{D}^{2(g-1)}-\{(1,\dots, 1)\})/\langle \overline{M_c}\rangle$.

\item for $1\leq k\leq p-1$, $e^{-1}(\xi^kI_p)\cap \varphi(\overline{\mathcal{H}_{\overline{\rho}_{K}}}^i)$ is homeomorphic to $\overline{D}^{2(g-1)}/\langle \overline{M_c}\rangle$.
\end{itemize}

\end{corollary}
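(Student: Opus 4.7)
The starting point is to reduce to a single subgroup using Proposition \ref{onesinglphiKsurfgrp} combined with Lemma \ref{mcginv}: picking $\phi \in \Aut^+(\pi_1(\Sigma_g))$ with $\phi(K_0) = K$ (where $K_0$ is the specific subgroup of Remark \ref{surfgrponeK}), precomposition by $\phi^{-1}$ yields a homeomorphism $\varphi(\overline{\mathcal{H}_{\overline{\rho}_{K_0}}}^i) \cong \varphi(\overline{\mathcal{H}_{\overline{\rho}_K}}^i)$ (using Lemma \ref{identifvarphiHrho} to absorb the ambiguity in choosing $\overline{\rho}_K$), and this homeomorphism preserves the Euler invariant by Lemma \ref{mcginv}. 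It therefore suffices to prove the statement for $K = K_0$, for which Proposition \ref{Hrhosurfgroup} identifies the pseudo-component with
$$\left(\langle \overline{D(\xi)}\rangle\times \overline{D}^{2(g-1)}-\{(\overline{I_p},\dots,\overline{I_p})\}\right)/\langle \overline{M_c}\rangle.$$

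I would then dispose of the action on the first factor. A direct computation gives $M_c D(\xi) M_c^{-1} = \xi^{-1} D(\xi)$, so in $PSL(p,\mathbb{C})$ the conjugation of $\overline{M_c}$ fixes each element of $\langle\overline{D(\xi)}\rangle$ pointwise. The quotient therefore splits as a disjoint union
$$\varphi(\overline{\mathcal{H}_{\overline{\rho}_{K_0}}}^i) = \bigsqcup_{k=0}^{p-1} Y_k,$$
where $Y_0 = (\overline{D}^{2(g-1)} - \{(\overline{I_p},\dots,\overline{I_p})\})/\langle\overline{M_c}\rangle$ (the puncture absorbing the removed trivial point) and $Y_k = \overline{D}^{2(g-1)}/\langle\overline{M_c}\rangle$ for $k \neq 0$.

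The core computation is then to evaluate $e(\rho)$ on each $Y_k$. Using Proposition \ref{Transgr} with $\gamma_0 = a_1$, any representation $\rho \in \mathcal{H}_{\overline{\rho}_{K_0}}$ decomposes as $\rho = u \cdot \overline{\rho}_{K_0}$. Since $H^1(\langle\overline{M_c}\rangle, \overline{D}) = 0$ (established within the proof of Proposition \ref{Transgr}), I may modify $u$ by a coboundary so that $u(a_1) = \overline{I_p}$, which gives $\rho(a_1) = \overline{M_c}$ and $\rho(b_1) = f(b_1) = \overline{D(\xi)}^k$ for the $k$ parametrizing $Y_k$. For $i \geq 2$ both $\rho(a_i)$ and $\rho(b_i)$ lie in $\overline{D}$ and admit commuting diagonal lifts, contributing $I_p$ to the commutator product. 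For $i = 1$, lifting to $M_c$ and $D(\xi)^k$ yields
$$[M_c, D(\xi)^k] = M_c D(\xi)^k M_c^{-1} D(\xi)^{-k} = \xi^{-k} I_p,$$
so $e(\rho) = \xi^{-k} I_p$ on $Y_k$. As $k \mapsto \xi^{-k}$ bijects $\{0,\dots,p-1\}$ with the set of $p$-th roots of unity, the $Y_k$'s are exactly the fibers of $e$ described in the statement.

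Finally, connectedness: each $\overline{D}^{2(g-1)}$ is a connected complex torus of real dimension $4(g-1)(p-1) \geq 4$, so it stays connected after removing one point, and quotients by the finite cyclic group $\langle\overline{M_c}\rangle$ preserve connectedness; thus each $Y_k$ is connected. Since $e$ is locally constant (being a $Z(SL(p,\mathbb{C}))$-valued cohomological invariant) and distinguishes the $Y_k$, these are precisely the $p$ connected components. I expect the main technical point to be the Euler invariant computation, specifically justifying the normalization $u(a_1) = \overline{I_p}$ through the vanishing of $H^1(\langle\overline{M_c}\rangle, \overline{D})$ and checking that the precomposition by $\phi^{-1}$ in the reduction step respects the labeling of Euler fibers; everything else is direct bookkeeping.
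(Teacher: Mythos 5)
Your proof is correct. It follows the same overall strategy as the paper's proof of this corollary --- reduce to the fixed $K_0$ via Proposition~\ref{onesinglphiKsurfgrp} and Lemma~\ref{mcginv}, use Proposition~\ref{Hrhosurfgroup} for the explicit model of the pseudo-component, and then compute the Euler invariant through lifts of $a_1\mapsto \overline{M_c}$, $b_1\mapsto \overline{D(\xi)}^k$ to $SL(p,\mathbb{C})$ --- but with a small and slightly cleaner variation in execution. The paper exhibits one explicit representative $\rho_k$ per value of $k$, computes $e(\rho_k)=\xi^{-k}I_p$, deduces that $e$ is onto, and then invokes the count of connected components coming from Proposition~\ref{Hrhosurfgroup} to conclude that the fibers of $e$ are the components. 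You instead first split the model space into $p$ disjoint slices $Y_k$ by observing that $\overline{M_c}$ fixes the first factor $\langle\overline{D(\xi)}\rangle$ pointwise, then normalize an arbitrary cocycle by a coboundary (using the surjectivity of $\Trace_{\overline{D}}$, which is what the vanishing of $H^1(\langle\overline{M_c}\rangle,\overline{D})$ amounts to here since $\Norme_{\overline{D}}$ is trivial) to compute $e$ uniformly on each $Y_k$; this shows directly that $e$ is constant on each slice rather than appealing to a single representative plus local constancy. Both routes are valid and of comparable length; your version makes the constancy of $e$ on each $Y_k$ explicit, while the paper's version avoids the cocycle-normalization step at the cost of leaning a bit harder on the ``exactly $p$ components'' count. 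One minor point of hygiene: when you cite $H^1(\langle\overline{M_c}\rangle,\overline{D})=0$ to justify $u(a_1)=\overline{I_p}$, the precise input needed is $\im(\Trace_{\overline{D}})=\overline{D}$; these happen to be equivalent here because $\Norme_{\overline{D}}$ is the zero map, but the surjectivity of $\Trace_{\overline{D}}$ is the form actually used and proved in Proposition~\ref{Transgr}.
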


\begin{proof}

Proposition \ref{onesinglphiKsurfgrp} which states that $\Aut^+(\pi_1(\Sigma_g))$ acts transitively on components and Lemma \ref{mcginv} which states that the Euler invariant is invariant by action of $\Aut^+(\pi_1(\Sigma_g))$ imply that we only need to do it for one normal subgroup of index $p$ in $\pi_1(\Sigma_g)$. Define $a_1,b_1,\dots, a_g,b_g$ to be its standard generators and, like before define $K_0$ to be the subgroup generated by $a_1^p,b_1$, $a_1^{i}a_ja_1^{-i}$, $a_1^{i}b_ja_1^{-i}$ for $1\leq i\leq p$ and $2\leq j\leq g$.

Define a group morphism  $\rho_k:\pi_1(\Sigma_g)\rightarrow \overline{D}\rtimes\langle \overline{M_c}\rangle$ by $\rho_k(a_1):=\overline{M_c} $, $\rho_k(b_1):=\overline{D(\xi)}^k$ and $\rho_k(a_i)=\rho_k(b_i)= \overline{D(\xi)}$ for $2\leq i\leq g$. Since  $\rho_k(a_1)$ and $\rho_k(a_2)$ generate an irreducible subgroup of $PSL(p,\mathbb{C})$,   $\rho_k$ is irreducible. Finally,  $\Ker(q\circ\rho_k)$   contains $K_0$ and is therefore equal to it because they have the same index in $\pi_1(\Sigma_g)$. Therefore, $[\rho_k]$ belongs to $\varphi(\overline{\mathcal{H}_{\overline{\rho}_{K_0}}}^i)$.

Taking the following lifts $\hat{\rho_k}(a_1):=M_c$, $\hat{\rho_k}(b_1):=D(\xi)^k$, $\hat{\rho_k}(a_i):=D(\xi)$ and $\hat{\rho_k}(b_i):=D(\xi)$ for $i\geq 2$, the Euler invariant of $\rho_k$ is  $e(\rho_k)=\xi^{-k}I_p$. As a result, the map $e:\varphi(\overline{\mathcal{H}_{\overline{\rho}}}^i)\rightarrow Z(SL(p,\mathbb{C}))$ is onto. Whence $\varphi(\overline{\mathcal{H}_{\overline{\rho}_{K_0}}}^i)$ has at least $p$ connected components (the $p$ fibers over $e$). Proposition \ref{Hrhosurfgroup} implies that it has exactly $p$ connected components. Therefore, the $p$ connected components of $\varphi(\overline{\mathcal{H}_{\overline{\rho}_{K_0}}}^i)$ are its fibers over $e$.

\bigskip

Remark that, through the homeomorphism of Proposition \ref{Hrhosurfgroup}, which sends $\varphi(\overline{\mathcal{H}_{\overline{\rho}_{K_0}}}^i)$  to $\left(\langle \overline{D(\xi)}\rangle\times \overline{D}^{2(g-1)}-\{(\overline{I_p},\dots,\overline{I_p})\}\right)/ \langle \overline{M_c}\rangle$, the conjugacy class of $\rho_k$ is sent to $(\overline{D(\xi)}^k,\overline{D(\xi)},\dots,\overline{D(\xi)})$.

As a result, the connected component $e^{-1}(I_p)$ of the conjugacy class of $\rho_0$ is  homeomorphic to $\left( \overline{D}^{2(g-1)}-\{(\overline{I_p},\dots,\overline{I_p})\}\right)/ \langle \overline{M_c}\rangle$. For $1\leq k\leq p-1$, the connected component $e^{-1}(\xi^{-k}I_p)$ of the conjugacy class of $\rho_k$ is  homeomorphic to $\overline{D}^{2(g-1)}/ \langle \overline{M_c}\rangle$. \end{proof}

\begin{remark}\label{numbconcomp}
Applying this corollary,  $\chi_{Sing}^i(\pi_1(\Sigma_g),PSL(p,\mathbb{C}))$ has at least $p$ connected components given by their Euler invariants. Our next order of business is to show that each fiber above the Euler invariant is connected. 

Basically, the idea would be to apply Theorem \ref{combi} (like in the free group case). However, it might happen that $\varphi(\overline{H_{\overline{\rho}_K}}^i)\cap\varphi(\overline{H_{\overline{\rho}_{K'}}}^i)\cap e^{-1}(\xi^k I_p)$ is empty. The necessary and sufficient conditions for this set to be  not empty are given in Proposition \ref{interempty}. To do so, it is necessary to consider $\pi_1(\Sigma_g)^{Ab}/p\pi_1(\Sigma_g)^{Ab}$ as a symplectic space.

\end{remark}

By  \textit{symplectic (vector) space}, we mean a vector space endowed with a symplectic bilinear form. Let $(E_0,\omega)$ be a  symplectic space with  a linear basis $(x_1,y_1,\dots,x_g,y_g)$. It is a symplectic basis   if $\omega(x_i,x_j)=0=\omega(y_i,y_j)$ for all $i,j$ and $\omega(x_i,y_j)=1$ if $i=j$ and $0$ else. We will use the fact that any vector space endowed with a symplectic bilinear form admits a symplectic basis and any linear endomorphism sending a symplectic basis to a symplectic basis is a symplectic transformation. We denote $Sp(E_0,\omega)$ the group of symplectic transformations for $(E_0,\omega)$. 

Furthermore, if $E$ is a subspace of codimension $2$ in $E_0$ then  either $\omega_{|E}$ is non-degenerate or it has a kernel of dimension $2$ (because $\omega$ is non-degenerate). In the first case, we say that $E$ is \textit{non-degenerate}, in the second case we say that $E$ is \textit{degenerate}.

\begin{lemma}\label{twoorbits}

Let $g\geq 1$ and $(E_0,\omega)$ be a symplectic $K$-linear space of dimension $2g$. Then, there are, two orbits in $\left\lbrace (E,E')\left|\begin{array}{c}E,E'\text{ are hyperplanes in $E_0$}\\ E\neq E'\end{array}\right.\right\rbrace$ for the action of $Sp(E_0,\omega)$ depending whether $E\cap E'$ is degenerate or non-degenerate.

\end{lemma}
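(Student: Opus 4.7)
The plan is first to observe that the degeneracy of $\omega|_{E\cap E'}$ is preserved under the $Sp(E_0,\omega)$-action, which already gives at most two orbits, and then to prove transitivity within each orbit by constructing a symplectic basis of $E_0$ adapted to the pair $(E,E')$. Invariance is immediate, since any $g\in Sp(E_0,\omega)$ sends $E\cap E'$ to $g(E)\cap g(E')$ while preserving $\omega$, hence preserves the dimension of the radical of $\omega|_{E\cap E'}$, which (as noted just above the statement) is $0$ or $2$.

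In the non-degenerate case, set $F:=E\cap E'$. Then $E_0=F\oplus F^{\perp}$, with $F^{\perp}$ a hyperbolic plane; since $E$ is a hyperplane properly containing $F$ and $E\neq E_0$, the intersection $\ell:=E\cap F^{\perp}$ is a line, and similarly $\ell':=E'\cap F^{\perp}$, with $\ell\neq\ell'$. I would pick a symplectic basis $(x_1,y_1,\ldots,x_{g-1},y_{g-1})$ of $F$, choose a nonzero $x_g\in\ell$, and then $y_g\in\ell'$ scaled so that $\omega(x_g,y_g)=1$; this is possible because $\omega(x_g,\cdot)|_{F^{\perp}}$ has kernel exactly $\ell$, hence is nonzero on $\ell'\neq\ell$. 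The resulting symplectic basis realises $E=\langle x_1,y_1,\ldots,y_{g-1},x_g\rangle$ and $E'=\langle x_1,y_1,\ldots,y_{g-1},y_g\rangle$. Performing the same construction for any other non-degenerate pair $(E_2,E_2')$ yields a second symplectic basis, and the linear map exchanging the two bases is symplectic and carries one pair to the other.

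In the degenerate case, $R:=F^{\perp}$ is a $2$-dimensional isotropic subspace contained in $F$, and orthogonality (using $F^{\perp\perp}=F$) produces distinct lines $E^{\perp},(E')^{\perp}$ inside $R$. I would pick a basis $(x_1,x_2)$ of $R$ with $\langle x_1\rangle=E^{\perp}$ and $\langle x_2\rangle=(E')^{\perp}$, then complete it to a symplectic basis $(x_1,y_1,x_2,y_2,x_3,y_3,\ldots,x_g,y_g)$ of $E_0$ via the standard extension procedure for isotropic subspaces. In these coordinates $E=(\langle x_1\rangle)^{\perp}=\langle x_1,x_2,y_2,x_3,y_3,\ldots,x_g,y_g\rangle$ and $E'=\langle x_1,y_1,x_2,x_3,y_3,\ldots,x_g,y_g\rangle$, i.e.\ $E$ and $E'$ are obtained by deleting $y_1$, respectively $y_2$, from the basis. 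Again, the basis change between two such adapted bases is symplectic and matches the two pairs.

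The main technical ingredient in both arguments is the existence of the adapted bases, which rests on two standard facts from symplectic linear algebra: a symplectic basis of a non-degenerate subspace extends to a symplectic basis of the ambient space, and any isotropic subspace admits a complementary dual isotropic subspace with which it forms a hyperbolic summand. Granting these, the proof is a normal-form argument with no further obstacle; the only point worth watching is the scaling step in the non-degenerate case, and the fact that in the degenerate case the two lines in $R$ can be prescribed independently because $R$ is totally isotropic.
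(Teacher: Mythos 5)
Your proof is correct. The invariance observation at the start is right (and makes the at-most-two-orbits step explicit, which the paper leaves implicit), and both normal-form constructions are valid. In the non-degenerate case your argument is essentially the same as the paper's: the paper takes a vector $u$ with $E=\Span(u)\oplus F$ and subtracts its $F$-component to get $v\perp F$, which is exactly your generator of $\ell=E\cap F^{\perp}$, and then rescales a generator of $\ell'$ so that it pairs to $1$ with $v$. In the degenerate case, however, you take a genuinely different (and more conceptual) route. The paper writes $F=K\oplus^{\perp}F_0$ with $K=F^{\perp}$ the radical, picks vectors $v\in E$ and $v'\in E'$ orthogonal to $F_0$, and then by hand constructs dual vectors $y_1',y_2'\in K$ and corrects $v'$ to $v''$ so that $(v,y_1',v'',y_2')$ completes to a symplectic basis. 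You instead observe that $E^{\perp}$ and $(E')^{\perp}$ are already two distinct lines inside the totally isotropic $R=F^{\perp}$, choose a basis $(x_1,x_2)$ of $R$ adapted to those lines, and invoke the standard extension of an isotropic basis to a symplectic basis; the hyperplanes are then recovered as $\langle x_1\rangle^{\perp}$ and $\langle x_2\rangle^{\perp}$. Your version avoids the explicit correction step and the ad hoc choice of $y_1',y_2'$, at the cost of quoting the isotropic-extension lemma as a black box, whereas the paper's version is self-contained but more computational; both are complete. One harmless remark: for $g=1$ the degenerate case is vacuous (since $F=\{0\}$ and $F^{\perp}=E_0$ cannot sit inside $F$), so there is in fact only one orbit; this is an imprecision in the statement of the lemma itself rather than in either proof, and it does not affect the use of the lemma, which is for $g\geq 2$.
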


\begin{proof}

Let $(x_1,y_1,\dots,x_g,y_g)$ be a symplectic basis for $E_0$. Let $E,E'$ be two hyperplanes in $E_0$ verifying $E\neq E'$. 

If $E\cap E'$ is non-degenerate we shall find a symplectic transformation sending $E$ to $\Span (x_1,x_2,y_2,\dots,x_g,y_g)$ and $E'$ to $\Span (y_1,x_2,y_2,\dots,x_g,y_g)$.

If $E\cap E'$ is degenerate we shall find a symplectic transformation sending $E$ to $\Span (y_1,x_2,y_2,x_3,y_3\dots,x_g,y_g)$ and $E'$ to $\Span (x_1,y_1,y_2,x_3,y_3\dots,x_g,y_g)$.

\bigskip

Assume $F:=E\cap E'$ is non-degenerate. Let $(x_2',y_2',\dots, x_g',y_g')$ be a symplectic basis of $F$. Let $u$ be in $E$ such that $E=\Span(u)\oplus F$. Since $F$ is non-degenerate, there exists $f_0\in F$ such that $\omega(u,\cdot)_{|F}=\omega(f_0,\cdot)_{|F}$. Therefore $v:=u-f_0$ is orthogonal to $F$ and $E=\Span(v)\oplus^{\perp} F$. Likewise, there is $v'\in E'$ verifying $E'=\Span(v')\oplus^{\perp} F$. Since  $E_0=\Span(v,v')\oplus^{\perp} F$ and  $E_0$ and $F$ are both non-degenerate, $\Span(v,v')$ is non degenerate, whence $\omega(v,v')\neq 0$.

 Let $v'':=\omega(v,v')^{-1} v'$ then $(v,v'',x_2',y_2',\dots,x_g',y_g')$ is a symplectic basis of $E_0$. Therefore, the linear map $\psi$ sending this symplectic basis to the initial one $(x_1,y_1,\dots,x_g,y_g)$ is the wanted symplectic transformation.

\bigskip

Assume $F:=E\cap E'$ is degenerate. Let $K:=F^{\perp}$. Then $F=K\oplus^{\perp} F_0$ where $F_0$ is non-degenerate. Let $(x_3',y_3',\dots, x_g',y_g')$ be a symplectic basis of $F_0$.

Let $u$ be in $E$ such that $E=\Span(u)\oplus F$. Since $F_0$ is non-degenerate, there exists $f_0\in F_0$ such that $\omega(u,\cdot)_{|F_0}=\omega(f_0,\cdot)_{|F_0}$. Therefore $v:=u-f_0$ is orthogonal to $F_0$ and $E=(\Span(v)\oplus K)\oplus^{\perp} F_0$. Likewise, there is $v'\in E'$ verifying $E'=(\Span(v')\oplus K)\oplus^{\perp} F_0$. We remark that $E_0=(\Span(v,v')\oplus K)\oplus^{\perp} F_0$ and therefore $\Span(v,v')\oplus K$ is non-degenerate.

Since $\Span(v,v')\cap F$ is trivial and $F=K^{\perp}$, there is $y_1'\in K$ verifying $\omega(v,y_1')=1$ and $\omega(v',y_1')=0$. If  $v'':=v'-\omega(v,v')y_1'$, then  $v''$ is both orthogonal to $v'$ and $y_1'$. Furthermore $E''=(\Span(v'')\oplus K)\oplus^{\perp} F_0$. Finally, there is $y_2'\in K$ verifying $\omega(v'',y_2')=1$ and $\omega(v,y_2')=0$. Therefore $(v,y_1',v'',y_2',x_3',y_3',\dots, x_g',y_g')$ is a symplectic basis and the linear map  sending this symplectic basis to $(x_1,y_1,\dots,x_g,y_g)$ is the wanted symplectic transformation. \end{proof}

 \begin{remark}\label{irredcompsurf} Let $g\geq 2$. Then   $\pi_1(\Sigma_g)^{Ab}/p\pi_1(\Sigma_g)^{Ab}$ is isomorphic to $(\mathbb{Z}/p)^{2g}$. We denote $\Phi$ the natural projection of $\pi_1(\Sigma_g)$ onto $E_{p,g}:=(\mathbb{Z}/p)^{2g}$. Remark that there is a natural symplectic form $\omega$ on $E_{p,g}$ which is invariant by the action of $\Aut^+(\pi_1(\Sigma_g))$. If $(a_1,b_1,\dots,a_g,b_g)$ is a standard system of generator for $\pi_1(\Sigma_g)$ then $(\Phi(a_1),\Phi(b_1),\dots,\Phi(a_g),\Phi(b_g))$ is naturally a symplectic basis of $E_{p,g}$ for $\omega$. \end{remark}

\begin{proposition}\label{interempty}

Let $g\geq 2$, $p$ be a prime number, $\Sigma_g$ be a closed surface of genus $g$, $K,K'$ be two different normal subgroups of index $p$ in $\pi_1(\Sigma_g)$ and $k$ be an integer between $0$ and $p-1$. Then, the cardinal of $e^{-1}(\xi^kI_p)\cap\varphi(\overline{\mathcal{H}_{\overline{\rho}_K}}^i)\cap\varphi(\overline{\mathcal{H}_{\overline{\rho}_{K'}}}^i)$  is :

$$\left\lbrace\begin{array}{cl}1&\text{ if $k\neq 0$ and $\Phi(K\cap K')$\text{ is non-degenerate}}\\ 0&\text{ if $k=0$ and $\Phi(K\cap K')$\text{ is non-degenerate}}\\ 0&\text{ if $k\neq 0$ and $\Phi(K\cap K')$\text{ is degenerate}}\\ p-1&\text{ if $k=0$ and $\Phi(K\cap K')$\text{ is degenerate}}\end{array}\right.$$

\end{proposition}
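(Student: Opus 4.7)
The plan is to reduce to two canonical cases using the $\Aut^+(\pi_1(\Sigma_g))$-action, and then compute the Euler invariant explicitly from the commutation relations in $\langle D(\xi), M_c \rangle$.

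By Proposition \ref{abelirred}, every conjugacy class $[\rho]$ in the intersection $\varphi(\overline{\mathcal{H}_{\overline{\rho}_K}}^i) \cap \varphi(\overline{\mathcal{H}_{\overline{\rho}_{K'}}}^i)$ has kernel exactly $K \cap K'$ and (up to conjugation) realizes an isomorphism $\pi_1(\Sigma_g)/(K \cap K') \cong \langle \overline{D(\xi)} \rangle \times \langle \overline{M_c} \rangle$. Using Lemma \ref{mcginv} (the Euler invariant is $\Aut^+$-invariant) together with the surjectivity of $\Aut^+(\pi_1(\Sigma_g)) \to Sp(2g, \mathbb{Z}/p)$ already invoked in Proposition \ref{onesinglphiKsurfgrp}, I would apply Lemma \ref{twoorbits} to the pair $(\Phi(K), \Phi(K'))$ to reduce to a non-degenerate model and a degenerate model. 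In each case, pick a standard symplectic set of generators $(a_1, b_1, \ldots, a_g, b_g)$ of $\pi_1(\Sigma_g)$ realizing the model: in the non-degenerate case the quotient $\pi_1(\Sigma_g)/(K \cap K')$ is generated by $\bar a_1, \bar b_1$ with $\bar a_i = \bar b_i = \overline{I_p}$ for $i \geq 2$; in the degenerate case it is generated by $\bar a_1, \bar a_2$ with $\bar b_i = \overline{I_p}$ for every $i$ and $\bar a_i = \overline{I_p}$ for $i \geq 3$.

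The key ingredient is the commutator identity derived from the relation $D(\xi) M_c = \xi\, M_c D(\xi)$ used throughout Section \ref{badsub}:
\[
[D(\xi)^\alpha M_c^\beta,\, D(\xi)^\gamma M_c^\delta] = \xi^{\alpha \delta - \beta \gamma}\, I_p.
\]
In the non-degenerate case, $\hat\rho(a_i)$ and $\hat\rho(b_i)$ are central in $SL(p, \mathbb{C})$ for $i \geq 2$, so the corresponding commutators are trivial and
\[
e(\rho) = [\hat\rho(a_1), \hat\rho(b_1)] = \xi^{\det M}\, I_p,
\]
where $M \in GL(2, \mathbb{Z}/p)$ is the matrix of the isomorphism in the basis $(\overline{D(\xi)}, \overline{M_c})$. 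By Lemma \ref{countZp2}, two such matrices yield conjugate representations iff they differ by an $SL(2, \mathbb{Z}/p)$ element, so the $p-1$ classes (Corollary \ref{countinter}) are parametrized by $\det M \in (\mathbb{Z}/p)^*$. Hence for $k \neq 0$, $e^{-1}(\xi^k I_p)$ meets the intersection in exactly one class, while $e^{-1}(I_p)$ does not meet it at all. In the degenerate case, $\hat\rho(b_i)$ is central for \emph{every} $i$, so every commutator in the product $\prod_{i=1}^g [\hat\rho(a_i), \hat\rho(b_i)]$ is trivial, forcing $e(\rho) = I_p$ for all $p-1$ classes in the intersection; thus $e^{-1}(I_p)$ contains all of them and the remaining fibers are empty.

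The main obstacle is the bookkeeping in the reduction step: one must verify that precomposing by the automorphism realizing the symplectic transformation of Lemma \ref{twoorbits} genuinely transports the ordered intersection $(\varphi(\overline{\mathcal{H}_{\overline{\rho}_K}}^i), \varphi(\overline{\mathcal{H}_{\overline{\rho}_{K'}}}^i))$ to the model pair while preserving the Euler invariant, and to check that the ambiguity in the central lifts $\hat\rho$ disappears inside each commutator so that $e(\rho)$ is unambiguously determined by the quotient representation $\pi_1(\Sigma_g)/(K \cap K') \to \langle \overline{D(\xi)} \rangle \times \langle \overline{M_c} \rangle$.
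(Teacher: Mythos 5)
Your proof is correct and follows essentially the same route as the paper: reduce to one model pair in each of the two symplectic orbits of Lemma \ref{twoorbits} using the $\Aut^+(\pi_1(\Sigma_g))$-action, then compute the Euler invariant inside $\overline{D}\rtimes\langle\overline{M_c}\rangle$. The only real difference is stylistic: the paper exhibits $p-1$ explicit representatives $\rho_k$ in each model and reads off $e(\rho_k)$, whereas you parametrize the $p-1$ classes once and for all by the determinant of the matrix $M\in GL(2,\mathbb{Z}/p)$ of the induced isomorphism (with respect to the model bases and $(\overline{D(\xi)},\overline{M_c})$), using Lemma \ref{countZp2} to see that left $SL_2$-multiplication is precisely the conjugacy relation, and then derive the closed formula $e(\rho)=\xi^{\det M}I_p$ in the non-degenerate case (a quick check with $\rho_k(a_1)=\overline{M_c}$, $\rho_k(b_1)=\overline{D(\xi)}^k$ gives $\det M=-k$ and $e=\xi^{-k}$, agreeing with the paper). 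Both presentations use the same inputs (Lemma \ref{twoorbits}, Lemma \ref{countZp2}, Corollary \ref{countinter}, Lemma \ref{mcginv}); yours is marginally more structural, while the paper's is marginally more self-contained since it sidesteps the need to track the matrix $M$ precisely.
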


\begin{proof}

Lemma \ref{twoorbits} justifies that we only need to show this property for one example when $\Phi(K\cap K')$ is non-degenerate and one example when $\Phi(K\cap K')$ is degenerate. We denote $a_1,b_1,\dots,a_g,b_g$ a system of standard generators for $\pi_1(\Sigma_g)$.

\bigskip

The non-degenerate case. We define $K:=\Phi^{-1}(E)$ and $K':=\Phi^{-1}(E')$ where 

 $$\begin{array}{l}E:=\Span(\Phi(b_1),\Phi(a_2),\Phi(b_2),\dots,\Phi(a_g),\Phi(b_g))\\ E':=\Span(\Phi(a_1),\Phi(a_2),\Phi(b_2),\dots,\Phi(a_g),\Phi(b_g))\end{array}$$

Then $K$ and $K'$ are both normal subgroup of index $p$ in $\pi_1(\Sigma_g)$. Furthermore $\Phi(K\cap K')$ is the subspace of $E_{p,g}$ generated by $(\Phi(a_2),\Phi(b_2),\dots,\Phi(a_g),\Phi(b_g))$. Since this is a symplectic family, it follows that  $\Phi(K\cap K')$ is non-degenerate.

For $1\leq k\leq p-1$, define  a group morphism $\rho_k$ from $\pi_1(\Sigma_g)$ to $PSL(p,\mathbb{C})$ with $\rho_k(a_1):=\overline{M_c}$, $\rho_k(b_1):=\overline{D(\xi)}^k$ and $\rho(K\cap K')$ is trivial. The image of  $\rho_k$ is irreducible and its  conjugacy class belongs to $\varphi(\overline{\mathcal{H}_{\overline{\rho}_K}}^i)\cap\varphi(\overline{\mathcal{H}_{\overline{\rho}_{K'}}}^i)$.

Finally,  $e(\rho_k)=[M_c,D(\xi)^k]=\xi^{-k}I_p$. As a result,  $\rho_1$,\dots, $\rho_{p-1}$ leads to   $p-1$ different points in $\varphi(\overline{\mathcal{H}_{\overline{\rho}_K}}^i)\cap\varphi(\overline{\mathcal{H}_{\overline{\rho}_{K'}}}^i)$. Theorem \ref{combi} implies that $\varphi(\overline{\mathcal{H}_{\overline{\rho}_K}}^i)\cap\varphi(\overline{\mathcal{H}_{\overline{\rho}_{K'}}}^i)$ contains $p-1$ elements. Whence the result.

\bigskip

The degenerate case. We define $K:=\Phi^{-1}(E)$ and $K':=\Phi^{-1}(E')$ where :

$$\begin{array}{l}E:=\Span(\Phi(b_1),\Phi(a_2),\Phi(b_2),\dots,\Phi(a_g),\Phi(b_g))\\ E':=\Span(\Phi(a_1),\Phi(b_1),\Phi(b_2),\Phi(a_3),\Phi(b_3),\dots,\Phi(a_g),\Phi(b_g))\text{.}\end{array}$$

Then $K$ and $K'$ are both normal subgroup of index $p$ in $\pi_1(\Sigma_g)$. Furthermore $\Phi(K\cap K')$ is the subspace of $E_{p,g}$ generated by $\Phi(b_1)$, $\Phi(b_2)$, $\Phi(a_3)$, $\Phi(b_3)$,\dots, $\Phi(a_g)$, $\Phi(b_g)$. Since $\Phi(b_1)$ is   orthogonal to  $\Phi(K\cap K')$, $\Phi(K\cap K')$ is degenerate.

For $1\leq k\leq p-1$, define  a group morphism $\rho_k$ from $\pi_1(\Sigma_g)$ to $PSL(p,\mathbb{C})$ with $\rho_k(a_1):=\overline{M_c}$, $\rho_k(a_2):=\overline{D(\xi)}^k$ and $\rho(K\cap K')$ is trivial. The image of  $\rho_k$ is irreducible and its  conjugacy class belongs to $\varphi(\overline{\mathcal{H}_{\overline{\rho}_K}}^i)\cap\varphi(\overline{\mathcal{H}_{\overline{\rho}_{K'}}}^i)$.

Finally,   $e(\rho_k)=I_p$. Remark that if $\rho_k$ is conjugate to $\rho_{k'}$ then the bases  of $\langle\overline{D(\xi)}\rangle \times \langle\overline{M_c}\rangle$ : $(\overline{M_c},\overline{D(\xi)}^k)$ and $(\overline{M_c},\overline{D(\xi)}^{k'})$ are obtained by the action of the normalizer of $\langle\overline{D(\xi)}\rangle \times \langle\overline{M_c}\rangle$. Lemma \ref{countZp2}  implies $k=k'$. Therefore $\rho_1$,\dots, $\rho_{p-1}$ leads to  $p-1$ different points in $\varphi(\overline{\mathcal{H}_{\overline{\rho}_K}}^i)\cap\varphi(\overline{\mathcal{H}_{\overline{\rho}_{K'}}}^i)$ which all have a trivial Euler invariant. Theorem \ref{combi} implies that their conjugacy classes are the only points in the intersection, whence the result. \end{proof}

The final result relies on the following geometric lemma.

\begin{lemma}\label{sympgeom}

Let $E$ and $E'$ be two different hyperplanes of $V:=(\mathbb{Z}/p)^{2g}$ with $g\geq 2$. Let $\omega$ be a symplectic form on $V$.  We have two cases :

\begin{itemize}

\item If  $E\cap E'$ is non-degenerate, there is an hyperplane $E_0$ such that  $E\cap E_0$ and $E'\cap E_0$ are both degenerate.

\item If $E\cap E'$ is degenerate, there is an hyperplane $E_0$ such that   $E\cap E_0$ and $E'\cap E_0$ are both non-degenerate.

\end{itemize}

\end{lemma}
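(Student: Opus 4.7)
My plan is to describe hyperplanes by their defining functionals via the symplectic form: since $\omega$ is non-degenerate, every hyperplane of $V$ has the form $u^\perp := \{x \in V : \omega(u,x) = 0\}$ for a nonzero $u$, unique up to scaling. Write $E = u^\perp$ and $E' = (u')^\perp$, and look for $E_0$ in the form $v^\perp$ for a suitable nonzero $v$.

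The key dictionary to verify is this: for a codimension-$2$ subspace $W = u^\perp \cap v^\perp$ with $u,v$ linearly independent, one has $W^\perp = \langle u, v \rangle$, and $W$ is degenerate (in the sense recalled before Lemma \ref{twoorbits}) iff $W^\perp \subset W$. Since $\omega(u,u) = \omega(v,v) = 0$ automatically, a direct check shows $\langle u, v \rangle \subset u^\perp \cap v^\perp$ iff $\omega(u,v) = 0$, that is, iff $v \in E$. Applying this to the three relevant pairs gives: $E \cap E_0$ is degenerate iff $v \in E$; $E' \cap E_0$ is degenerate iff $v \in E'$; and $F := E \cap E'$ is degenerate iff $\omega(u,u') = 0$, equivalently $u \in E'$ (equivalently $u' \in E$).

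In the first case of the lemma ($F$ non-degenerate), I seek $v \in F \setminus \{0\}$ with $v \notin \langle u \rangle \cup \langle u' \rangle$ (the latter ensuring $E_0 \neq E, E'$). But $\omega(u,u') \neq 0$ forces $u, u' \notin F$, so $\langle u \rangle \cap F = \langle u' \rangle \cap F = 0$, and since $\dim F = 2g - 2 \geq 2$ any nonzero $v \in F$ works. In the second case ($F$ degenerate), I instead seek $v \notin E \cup E'$, which automatically forces $v \neq 0$ and $v \notin \langle u \rangle \cup \langle u' \rangle$ (since $u \in E$ and $u' \in E'$ by alternation). Inclusion--exclusion gives $|V \setminus (E \cup E')| = p^{2g} - 2p^{2g-1} + p^{2g-2} = p^{2g-2}(p-1)^2 > 0$, so such a $v$ exists. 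The only ``obstacle'' is really bookkeeping: ensuring the three degeneracy conditions each translate into the clean pairing conditions $\omega(u,v)$, $\omega(u',v)$, $\omega(u,u')$ above.
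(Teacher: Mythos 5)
Your proof is correct, and it takes a genuinely different route from the paper's. The paper first proves Lemma~\ref{twoorbits} (that $Sp(V,\omega)$ acts with exactly two orbits on ordered pairs of distinct hyperplanes, distinguished by degeneracy of the intersection), and then in Lemma~\ref{sympgeom} it invokes that transitivity result to reduce to a single explicit example in each case, built concretely from a symplectic basis. You instead work intrinsically with the symplectic duality $u\mapsto u^\perp$ between lines and hyperplanes, and observe that degeneracy of $u^\perp\cap v^\perp$ (for independent $u,v$) is detected precisely by $\omega(u,v)=0$: indeed $(u^\perp\cap v^\perp)^\perp=\langle u,v\rangle$, and $\langle u,v\rangle\subset u^\perp\cap v^\perp$ iff $\omega(u,v)=0$ since $u,v$ are automatically isotropic. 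With this dictionary both cases reduce to clean existence statements (a nonzero $v\in E\cap E'$ off the two lines $\langle u\rangle,\langle u'\rangle$, resp. a vector $v\notin E\cup E'$) which you verify directly, using $\dim(E\cap E')=2g-2\geq 2$ in the first case and inclusion--exclusion in the second. Your argument is more self-contained (it does not use Lemma~\ref{twoorbits}) and avoids the explicit basis constructions; the paper's version has the advantage of reusing machinery that is needed anyway for Proposition~\ref{interempty}. One small bookkeeping point worth making explicit if you write this up: the requirement $E_0\neq E,E'$ is needed for the degeneracy dichotomy to apply (it is only defined for codimension-$2$ intersections), and you do correctly guarantee it by excluding $v\in\langle u\rangle\cup\langle u'\rangle$.
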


\begin{proof}

Let $(x_1,y_1,\dots,x_g,y_g)$ be a symplectic basis of $V$. Lemma \ref{twoorbits} justifies that it suffices to prove the lemma for one example in each case.

Let $F$ be the subspace generated by $x_2,y_2$,\dots,$x_g,y_g$, $E$ be $\Span(x_1)\oplus F$ and $E'$ be $\Span(y_1)\oplus F$.  Then $E\cap E'=F$ is non-degenerate. Let $E_0$ be the subspace generated by $x_1,y_1$, $x_2$, $x_3,y_3$,\dots,$x_g,y_g$. Then $x_2$ is a degenerate vector of $E_0\cap E$ and $E_0\cap E'$. Whence the result in this case.

Let $F_0$ be the subspace generated by $y_1,y_2,x_3,y_3$,\dots,$x_g,y_g$, $E$ be $\Span(x_1)\oplus F$ and $E'$ be $\Span(x_2)\oplus F$. Then $E\cap E'=F$ is degenerate. Let $E_0$ be the subspace generated by $x_1,x_2,y_1+y_2,x_3,y_3,\dots,x_g,y_g$. Then  $E_0\cap E$ and $E_0 \cap E'$ are both non-degenerate. Whence the result in this case. \end{proof}

Applying Corollary \ref{topsurfgrpHrhoK}, $e^{-1}(\xi^kI_p)\cap\varphi(\overline{\mathcal{H}_{\overline{\rho}_K}}^i)$ is connected of dimension $2(g-1)(p-1)$. Using Proposition \ref{interempty} and Lemma \ref{sympgeom}, this  leads to :

\begin{corollary}\label{conncomporblocsurfgrp}

Let $g\geq 2$ and $\Sigma_g$ be a closed Riemann surface of genus $g$. The orbifold locus of $\chi^i(\pi_1(\Sigma_g),PSL(p,\mathbb{C}))$ has $p$  connected components given by the fibers for the Euler invariant and its dimension is $2(g-1)(p-1)$.

\end{corollary}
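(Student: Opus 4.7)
The plan is to combine the pseudo-component decomposition of the singular locus (Equation \ref{decompbadlocus2}) with the fiberwise structure from Corollary \ref{topsurfgrpHrhoK}, and then to glue the pieces together using the intersection count in Proposition \ref{interempty} and the symplectic-linear Lemma \ref{sympgeom}.

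First I would observe that the Euler invariant $e$ is a well-defined continuous function on $\chi^i(\pi_1(\Sigma_g),PSL(p,\mathbb{C}))$ taking values in the discrete group $Z(SL(p,\mathbb{C}))\cong \mathbb{Z}/p$, hence is locally constant, and therefore each connected component of the singular locus is contained in a single fiber $e^{-1}(\xi^k I_p)$. Corollary \ref{topsurfgrpHrhoK} shows that for each $K$ and each $k$ the set $e^{-1}(\xi^k I_p)\cap \varphi(\overline{\mathcal{H}_{\overline{\rho}_K}}^i)$ is non-empty and connected, so the singular locus already has at least $p$ components, one per value of the Euler invariant. The dimension is then immediate: each non-empty piece $e^{-1}(\xi^k I_p)\cap \varphi(\overline{\mathcal{H}_{\overline{\rho}_K}}^i)$ is a (possibly punctured) quotient of either $\overline{D}^{2(g-1)}$ or $\langle \overline{D(\xi)}\rangle \times \overline{D}^{2(g-1)}$ by the finite group $\langle \overline{M_c}\rangle$, which has topological dimension $2(g-1)(p-1)$; there are only finitely many pseudo-components and they meet in finitely many points by Theorem \ref{combi}, so the total topological dimension is still $2(g-1)(p-1)$.

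The main step, and the one that will carry the argument, is to prove that for every $k$ the fiber $e^{-1}(\xi^k I_p)\cap \chi^i_{Sing}$ is connected. My plan is to introduce an auxiliary graph $G_k$ whose vertices are the normal subgroups $K$ of index $p$ in $\pi_1(\Sigma_g)$ and whose edges join $K$ to $K'$ exactly when
$$e^{-1}(\xi^k I_p)\cap \varphi(\overline{\mathcal{H}_{\overline{\rho}_K}}^i)\cap \varphi(\overline{\mathcal{H}_{\overline{\rho}_{K'}}}^i)\neq \emptyset\text{.}$$
By Proposition \ref{interempty}, this edge condition translates, via the mod-$p$ abelianization map $\Phi:\pi_1(\Sigma_g)\to E_{p,g}=(\mathbb{Z}/p)^{2g}$ endowed with its natural symplectic form, into a purely symplectic-linear statement on the hyperplanes $\Phi(K)$ and $\Phi(K')$: for $k\neq 0$ we need $\Phi(K\cap K')$ non-degenerate, and for $k=0$ we need it degenerate. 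Lemma \ref{sympgeom} is exactly the tool that handles both cases symmetrically, since for any two distinct hyperplanes of a given type it produces a third hyperplane connecting them in $G_k$; consequently $G_k$ has diameter at most two and is therefore connected.

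Finally, since the vertex pieces $e^{-1}(\xi^k I_p)\cap \varphi(\overline{\mathcal{H}_{\overline{\rho}_K}}^i)$ are themselves connected by Corollary \ref{topsurfgrpHrhoK} and any two of them are linked by a chain of non-empty intersections in $G_k$, their union over all $K$ — which is precisely $e^{-1}(\xi^k I_p)\cap \chi^i_{Sing}$ — is connected. Combined with the lower bound from the Euler invariant, this gives exactly $p$ connected components, completing the proof together with the dimension count above. The main obstacle is really just the bookkeeping in the graph argument: one has to check that the cases $k=0$ and $k\neq 0$ in Proposition \ref{interempty} match the two alternatives in Lemma \ref{sympgeom}, so that the same symplectic lemma does the work in both fibers.
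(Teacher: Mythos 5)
Your proof is correct and follows the same route as the paper: the paper's (very terse) proof likewise deduces connectedness of each Euler fiber from Corollary~\ref{topsurfgrpHrhoK}, Proposition~\ref{interempty}, and Lemma~\ref{sympgeom}, and the dimension count from the fiberwise homeomorphisms. Your graph-theoretic formulation with $G_k$ and the diameter-$\leq 2$ observation is just a clean way of writing out the connectedness step that the paper leaves implicit.
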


\begin{remark}\label{surfgrpirredcomp}

Considering the Zariski topology, we see that for each $0\leq k\leq p-1$ and each normal subgroup $K$ of index $p$ in $\pi_1(\Sigma_g)$, $e^{-1}(\xi^kI_p)\cap\varphi(\overline{\mathcal{H}_{\overline{\rho}_K}}^i)$ is an irreducible component of $\chi_{Sing}^i(\pi_1(\Sigma_g),PSL(p,\mathbb{C}))$.

\end{remark}

\section{Orbifold singularities and algebraic singularities}\label{orbalgsing}

Like we said in the introduction, the singular locus  $\chi^i_{Sing}(\Gamma,G)$ should be understood as orbifold singularities since, when $\Gamma$ is a free group or a surface group, it is the set of orbifold singularities of a well-defined orbifold.   A priori, it does not necessarily coincide with the locus of  algebraic singularities of $\chi^i(\Gamma,G)$. In Proposition \ref{singvarcar}, we   prove  that these two notions coincide when $\Gamma$ is a free group of rank $\geq 2$ or a closed surface group of genus $g\geq 2$ and $G=PSL(p,\mathbb{C})$ with $p$ prime.

To do this, we recall some facts about   tangent spaces to representation varieties and character varieties. If $\Gamma$ is a finitely generated group , $G$ is a complex algebraic group, $\mathfrak{g}$ is its Lie algebra and $\rho:\Gamma\to G$ is a representation, then $\mathfrak{g}$ is   a $\Gamma$-module with the action given by $Ad\circ\rho$. This $\Gamma$-module will be denoted   $\mathfrak{g}_{Ad\circ\rho}$.

 In \cite{Sik}, Proposition 34 (see also \cite{L-M}, Chapter 2) Sikora proves that the Zariski tangent space to $\Homb(\Gamma,G)$ at $\rho$ (denoted $T_{\rho}\Homb(\Gamma,G)$) can be identified to this  space of $1$-cocycles $Z^1(\Gamma,\mathfrak{g}_{Ad\circ\rho})$.

 \begin{remark}\label{similar} The proof of Proposition \ref{Hrho} is  similar  to Sikora's proof. The reason is that the Zariski tangent space $T_{\rho}\Homb(\Gamma,G)$ can be identified to the fiber of $\Hom(\Gamma,G(\mathbb{C}[\epsilon]))$ above $\rho$ where $G(\mathbb{C}[\epsilon])$ is naturally isomorphic to $\mathfrak{g}\rtimes_{Ad}G$.\end{remark}

  One can show that the Zariski tangent space to the orbit $[\rho]$ at $\rho$ is exactly $B^1(\Gamma,\mathfrak{g}_{Ad\circ\rho})$. For instance, Weil (in \cite{Wei}) uses this to prove that $H^1(\Gamma,\mathfrak{g}_{Ad\circ\rho})=0$ implies the local rigidity of $\rho$ (i.e.  a neighborhood of $\rho$ in the representation variety is contained in the conjugacy class of $\rho$). In particular, any finite group representation is locally rigid.

However, in general, the Zariski tangent space at $[\rho]$  to the schematic character variety $\mathfrak{X}(\Gamma,G)$ is different from $H^1(\Gamma,\mathfrak{g}_{Ad\circ\rho})$ (e.g. in  \cite{Ben}). When $\rho$ is scheme smooth, the link between this tangent space and $H^1(\Gamma,\mathfrak{g}_{Ad\circ\rho})$  has been studied  in \cite{Sik},  Paragraph 13 by Sikora (see also Proposition 5.2 in \cite{H-P}) using Luna's \'Etale Slice Theorem. Theorem 53 in \cite{Sik} states that if $\rho$ is a scheme smooth completely reducible representation then :

\begin{equation}
\dim\left(T_0 \left(H^1(\Gamma,\mathfrak{g}_{Ad\circ\rho})//Z_G(\rho)\right)\right)=\dim\left( T_{[\rho]}X(\Gamma,G)\right)=\dim\left( T_{[\rho]}\mathfrak{X}(\Gamma,G)\right)
\label{dimesptgt}
\end{equation}

To prove Proposition \ref{singvarcar}, we need two lemmas :

\begin{lemma}\label{dimirred}

Let $\Gamma$ be either a free group or a closed surface group and $G$ be a complex reductive algebraic group.  Then, for any irreducible representation $\rho$ from  $\Gamma$ to $G$, $\dim(\chi^i(\Gamma,G))=\dim(H^1(\Gamma,\mathfrak{g}_{Ad\circ\rho}))$.

\end{lemma}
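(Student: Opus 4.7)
The plan is to combine the scheme smoothness of $\Hom^i(\Gamma,G)$ at irreducible representations with the properness of the $G/Z(G)$-action, and to identify the dimension of the orbit with the dimension of the $1$-coboundaries. By the results recalled in the introduction, for $\Gamma$ a free group of rank $\geq 2$ or a closed surface group of genus $\geq 2$, every point of $\Hom^i(\Gamma,G)$ is scheme smooth (in the free case because $\Hom(\Gamma,G)=G^l$ is already smooth, in the surface case by Proposition~37 of \cite{Sik}). Consequently, at an irreducible representation $\rho$, the local dimension of $\Hom^i(\Gamma,G)$ equals the dimension of the Zariski tangent space of the schematic representation variety, namely $\dim Z^1(\Gamma,\mathfrak{g}_{Ad\circ\rho})$.

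Next I would use Sikora's Corollary~17 to deduce that $Z_G(\rho)$ is a finite extension of $Z(G)$, so the conjugation action of $G/Z(G)$ on $\Hom^i(\Gamma,G)$ has finite stabilizers. This action is proper by Proposition~1.1 in \cite{J-M}, so the dimension drops exactly by the orbit dimension and
\[
\dim_{[\rho]}\chi^i(\Gamma,G)=\dim_\rho \Hom^i(\Gamma,G)-\dim(G/Z(G)).
\]
It then remains to match $\dim(G/Z(G))$ with $\dim B^1(\Gamma,\mathfrak{g}_{Ad\circ\rho})$. For this I would use the surjection $\mathfrak{g}\twoheadrightarrow B^1(\Gamma,\mathfrak{g}_{Ad\circ\rho})$ sending $X$ to the coboundary $\gamma\mapsto X-\mathrm{Ad}(\rho(\gamma))X$; its kernel is $\mathfrak{g}^{\Gamma}=\Lie(Z_G(\rho))$, which equals $\Lie(Z(G))$ since $Z_G(\rho)/Z(G)$ is finite. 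Hence $\dim B^1=\dim\mathfrak{g}-\dim Z(G)=\dim(G/Z(G))$, and stringing the equalities together gives $\dim\chi^i(\Gamma,G)=\dim Z^1-\dim B^1=\dim H^1(\Gamma,\mathfrak{g}_{Ad\circ\rho})$.

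I do not expect any serious obstacle; the argument is a straightforward tangent-space dimension count and every input (scheme smoothness, properness, the description of $Z_G(\rho)$ as a finite extension of $Z(G)$) is already in place. The only point requiring a brief remark is that the statement of the lemma mentions the global dimension of $\chi^i(\Gamma,G)$: this coincides with the local dimension at any $[\rho]$ because $\dim H^1(\Gamma,\mathfrak{g}_{Ad\circ\rho})$ does not depend on the choice of irreducible $\rho$—for the free group case this is immediate ($\dim Z^1=l\dim G$), and for a closed surface group it follows from Poincaré duality, which forces $\dim H^0=\dim H^2=\dim Z(G)$ and thus fixes $\dim H^1$ in terms of $\dim G$ and $\dim Z(G)$ alone.
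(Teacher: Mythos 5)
Your proof is correct, and for the surface group case it takes a genuinely different route from the paper. In the free group case the paper computes $\dim Z^1=l\dim\mathfrak{g}$ and $\dim B^1=\dim\mathfrak{g}-\dim Z(G)$ directly, then compares with $\dim\chi^i=\dim G^l-\dim(G/Z(G))$ — which is the same count you give, just not framed through the general scheme-smoothness/proper-action machinery. In the surface group case, however, the paper simply writes ``it is also classical but more difficult'' and cites Goldman~\cite{Gol}; you instead give a self-contained argument: scheme smoothness of $\Hom^i(\pi_1(\Sigma_g),G)$ (Proposition~37 of~\cite{Sik}, already recalled in the paper) gives $\dim_\rho\Hom^i=\dim Z^1$, properness with finite stabilizers lets you subtract $\dim(G/Z(G))$, and the standard surjection $\mathfrak{g}\twoheadrightarrow B^1$ with kernel $\mathfrak{g}^\Gamma=\Lie(Z_G(\rho))=\Lie(Z(G))$ identifies $\dim(G/Z(G))$ with $\dim B^1$. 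Your concluding remark — that Poincar\'e duality and the Euler characteristic pin $\dim H^1$ down as $(2g-2)\dim\mathfrak{g}+2\dim Z(G)$, independent of $\rho$ — is a genuine addition: it is needed to pass from the local dimension at $[\rho]$ to the global dimension in the lemma's statement, a point the paper leaves implicit behind the citation. Your version has the advantage of making the lemma internally self-contained and treating both cases uniformly; the paper's version is shorter because it delegates the harder case to a reference.
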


\begin{proof}
If $\Gamma$ is a free group with $l\geq 2$ generators then  $Z^1(\Gamma,\mathfrak{g}_{Ad\circ\rho}))=\mathfrak{g}^l$ and $B^1(\Gamma,\mathfrak{g}_{Ad\circ\rho}))=\mathfrak{g}/\mathfrak{g}^{Ad\circ\rho(\Gamma)}$. Since $\rho(\Gamma)$ is irreducible $\mathfrak{g}^{Ad\circ\rho(\Gamma)}=\Lie(Z(G))$. 
 $$\dim(H^1(\Gamma,\mathfrak{g}_{Ad\circ\rho}))=\dim(\mathfrak{g}^l)-\dim(\mathfrak{g})+\dim(Z(G))=(l-1)\dim(G)+\dim(Z(G)) $$ 
$$\dim(\chi^i(\Gamma,G))=\dim(\Hom(\Gamma,G))-\dim(G/Z(G))=(l-1)\dim(G)+\dim(Z(G))\text{.}$$
  
  Whence the result in the free group case. For the surface group case, it is also classical but more difficult, see \cite{Gol}.\end{proof}

The next lemma  is well-known but its proof is written for the convenience of the reader. Our interest in cyclic quotients (vector spaces quotiented by a cyclic group action) comes from Theorem \ref{classifcentr} which implies that  centralizers of non-abelian bad representations are cyclic.

\begin{lemma}\label{cyclicaction}
Let $G$ be a cyclic group of prime order $p$ generated by $g$ and assume that we have a linear action of  $G$ on $\mathbb{C}^N$ where $N\geq 0$. Then $0$ mod $G$ is an algebraic singularity of the cyclic quotient $\mathbb{C}^N//G$ if and only if  $\codim(\Fix(g))>1$. \end{lemma}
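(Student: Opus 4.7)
The plan is to reduce Lemma \ref{cyclicaction} to the Chevalley--Shephard--Todd theorem. Since $g^p = 1$, the operator $g$ is diagonalizable over $\mathbb{C}$ with eigenvalues among the $p$-th roots of unity $1, \xi, \dots, \xi^{p-1}$. Decomposing $\mathbb{C}^N = \bigoplus_{a=0}^{p-1} V_a$ into $g$-eigenspaces, one has $V_0 = \Fix(g)$, and the quotient splits as $\mathbb{C}^N // G \cong V_0 \times (V'//G)$ with $V' := V_1 \oplus \cdots \oplus V_{p-1}$. The factor $V_0$ is smooth, so the origin of the full quotient is an algebraic singularity if and only if the origin of $V'//G$ is. I may therefore assume $V_0 = 0$, in which case $\codim(\Fix(g)) = \dim V'$.

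For the easy direction, if $\codim(\Fix(g)) \le 1$, then in the reduced setting either $V' = 0$ (the quotient is a point) or $\dim V' = 1$. In the latter case $g$ acts on $\mathbb{C}$ by some $\xi^a \neq 1$ and the invariant $z \mapsto z^p$ identifies $V'//G$ with $\mathbb{C}$, smooth at the origin.

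For the converse, assume $\dim V' \ge 2$. Because $p$ is prime, multiplication by any $k \in \{1, \dots, p-1\}$ is a bijection on $\mathbb{Z}/p$ fixing $0$, hence $\Fix(g^k) = \Fix(g) = \{0\}$ for every nontrivial element of $G$. In particular no nontrivial element of $G$ is a pseudo-reflection, so $G$ is not generated by pseudo-reflections, and Chevalley--Shephard--Todd guarantees that the invariant ring $\mathbb{C}[V']^G$ is not a polynomial algebra. To convert this into an algebraic singularity at $0$, I would invoke the following graded criterion: for a finitely generated positively graded $\mathbb{C}$-algebra $R$ with $R_0 = \mathbb{C}$ and Krull dimension $d$, the origin (corresponding to the irrelevant maximal ideal $\mathfrak{m}$) is a simple point of the spectrum if and only if $R \cong \mathbb{C}[y_1, \dots, y_d]$. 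This is a Nakayama-type observation: a minimal set of homogeneous generators of $R$ is in bijection with a basis of $\mathfrak{m}/\mathfrak{m}^2$, so $\dim_{\mathbb{C}}(\mathfrak{m}/\mathfrak{m}^2) = d$ would force those generators to be algebraically independent.

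The main obstacle is not any single computation but rather aligning these classical ingredients cleanly: verifying that Chevalley--Shephard--Todd applies in this cyclic setting (the argument is purely linear-algebraic), using the primality of $p$ to promote ``$g$ is not a pseudo-reflection'' to ``no nontrivial element of $G$ is a pseudo-reflection'', and then applying the graded smoothness criterion to pass from the algebraic statement about $\mathbb{C}[V']^G$ to the geometric statement about an algebraic singularity at $0$.
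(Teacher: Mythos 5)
Your argument is correct, but it takes a genuinely different route from the paper's. You split $\mathbb{C}^N = \Fix(g) \oplus V'$ so that $\mathbb{C}^N//G \cong \Fix(g) \times (V'//G)$, observe (using primality of $p$) that once $\dim V' \geq 2$ no nontrivial power of $g$ can be a pseudo-reflection on $V'$, invoke the Shephard--Todd direction of the Chevalley--Shephard--Todd theorem to conclude that $\mathbb{C}[V']^G$ is not a polynomial algebra, and then translate this into a tangent-space statement via graded Nakayama. That last step is correct as you use it: $\mathbb{C}[V']^G$ is a domain of Krull dimension $d = \dim V'$, so if $\dim_{\mathbb{C}} \mathfrak{m}/\mathfrak{m}^2 = d$ then the $d$ homogeneous generators furnished by graded Nakayama must be algebraically independent (any relation would force the transcendence degree, hence the Krull dimension, below $d$), making $\mathbb{C}[V']^G$ a polynomial ring --- a contradiction. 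The paper instead argues by hand: after diagonalizing the action with weights $0 \le a_1 \le \dots \le a_N$, it exhibits $N+1$ explicit invariant monomials $x_1,\dots,x_k, x_{k+1}^p,\dots,x_N^p, x_{N-1}^u x_N^{p-v}$ (with $u \equiv a_{N-1}^{-1}$, $v \equiv a_N^{-1}$ mod $p$) and checks they are linearly independent in $\mathfrak{n}_0/\mathfrak{n}_0^2$, forcing $\dim T_0 \geq N+1 > N = \dim(\mathbb{C}^N//G)$. Your approach is shorter and more conceptual if one is allowed to quote Shephard--Todd, and it generalizes verbatim to an arbitrary finite linear group (smooth at the origin iff generated by pseudo-reflections); the paper's is elementary and self-contained, needing only a direct computation inside the invariant ring.
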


\begin{proof}
Up to some   change of basis, there are  integers $0\leq a_1\leq \cdots\leq a_N\leq p-1$ such that the action of $G$ on $\mathbb{C}^N$ is given by $g\cdot (v_1,\dots,v_N)=(\xi^{a_1}v_1,\dots,\xi^{a_N}v_N)$ where $\xi$ is a primitive $p$-th root of the unity.  To study $\mathbb{C}^N//G$, we only need to compute its coordinate ring $B$. Let $A:=\mathbb{C}[x_1,\dots,x_N]$ be the coordinate ring of $\mathbb{C}^N$. Then $B$ is  the subring of  $A$ of invariants by the action of $G$ : $B=A^G$. 

\bigskip

If $\codim(\Fix(g))=0$, then $A=B$ and $\mathbb{C}^N//G$ is  everywhere smooth. If $\codim(\Fix(g))=1$, then $B=\mathbb{C}[x_1,\dots,x_{N-1},x_N^{p}]$. In this case $B$ is isomorphic to $A$ and, again, $\mathbb{C}^N//G$ is  everywhere smooth.

\bigskip

If  $\codim(\Fix(g))>1$, then let $k<N-1$ be such that $a_k=0$ and $a_{k+1}>0$. Define $\mathfrak{m}_0:=(x_1,\dots,x_N)$ in $A$ and $\mathfrak{n}_0:=B\cap \mathfrak{m}_0$ the maximal ideal in $B$ associated to $0$ mod $G$. Then $\mathfrak{n}_0$ contains $x_1,\dots,x_k,x_{k+1}^p,\dots,x_N^p$. Let $u$ (resp. $v$) be the inverse of $a_{N-1}$ (resp $a_N$) modulo $p$,   $x_{N-1}^{u}x_N^{p-v}$ is   invariant by $G$. Therefore it belongs to $\mathfrak{n}_0$ and we have $N+1$ elements in $\mathfrak{n}_0$ which are  linearly independent in the Zariski tangent space $\mathfrak{n}_0/\mathfrak{n}_0^2$ to  $\mathbb{C}^N//G$ at $0$ mod $G$. Whence $0$ mod $G$ is an algebraic singularity. \end{proof}

The proof of the following proposition  is a generalization of the  proof of Proposition 5.8 in \cite{H-P}.

\begin{proposition}\label{singvarcar}
Let $\Gamma$ be either a free group of rank $\geq 2$ or a surface group of genus $g\geq 2$,  $p$ be a prime number and  $\rho:\Gamma\to PSL(p,\mathbb{C})$ be an irreducible representation. Then $[\rho]$ is an algebraic singularity in the schematic character variety if and only if $\rho$ is bad. 
\end{proposition}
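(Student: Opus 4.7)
The plan is to combine Luna's \'etale slice theorem (encoded in (\ref{dimesptgt})) with the Chevalley--Shephard--Todd principle that a finite linear quotient is smooth at $0$ precisely when the group is generated by pseudo-reflections. Since $\Gamma$ is either free of rank $\geq 2$ or a closed surface group of genus $\geq 2$, the irreducible representation $\rho$ is scheme smooth (Proposition 37 of \cite{Sik}); combining (\ref{dimesptgt}) with Lemma \ref{dimirred} and the finiteness of $Z_G(\rho)$ (Corollary 17 of \cite{Sik}), we obtain
\[
\dim T_{[\rho]} \mathfrak{X}(\Gamma, G) = \dim T_0 \bigl( H^1(\Gamma, \mathfrak{g}_{Ad \circ \rho}) // Z_G(\rho) \bigr),
\]
while $\dim \mathfrak{X}^i(\Gamma, G)$ at $[\rho]$ equals $\dim H^1(\Gamma, \mathfrak{g}_{Ad \circ \rho})$, which coincides with the dimension of that quotient. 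Hence $[\rho]$ is scheme smooth iff $0$ is a simple point of $H^1 // Z_G(\rho)$. If $\rho$ is good, $Z_G(\rho) = Z(PSL(p, \mathbb{C})) = \{1\}$, so the quotient is $H^1$ itself and is smooth at $0$.

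Suppose now that $\rho$ is bad. By Theorem \ref{classifcentr}, $Z_G(\rho)$ is conjugate to either $\langle \overline{D(\xi)} \rangle \cong \mathbb{Z}/p$ or to $\langle \overline{D(\xi)} \rangle \times \langle \overline{M_c} \rangle \cong (\mathbb{Z}/p)^2$. The key computation is that for every non-trivial $\overline{h}$ in $\langle \overline{D(\xi)} \rangle \times \langle \overline{M_c} \rangle$, any lift $h \in SL(p, \mathbb{C})$ satisfies $\tr(h) = 0$: a pure $\overline{D(\xi)}^a$-part yields $\sum_i \xi^{ai} = 0$, while a non-zero $\overline{M_c}^b$-part shifts all diagonal entries off the diagonal. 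Combined with $h^p \in Z(SL(p, \mathbb{C}))$, which forces the eigenvalues of $h$ to be common scalar multiples of the $p$-th roots of unity, the trace vanishing forces each such multiple to occur exactly once. Therefore $h$ is conjugate in $GL(p, \mathbb{C})$ to a scalar multiple of $D(\xi)$, so $\dim \mathfrak{g}^{Ad_h} = p - 1$ and this fixed subspace has codimension $p(p-1)$ in $\mathfrak{g} = \mathfrak{sl}(p, \mathbb{C})$.

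Since $\langle \overline{h} \rangle$ is finite and commutes with $Ad \circ \rho$, complete reducibility yields a $\Gamma$-equivariant decomposition $\mathfrak{g} = \mathfrak{g}^{Ad_h} \oplus \mathfrak{g}'$ in which $\overline{h}$ acts on each isotype of $\mathfrak{g}'$ by a non-trivial scalar, and these scalars pass to cohomology; hence $H^1(\Gamma, \mathfrak{g})^{\overline{h}} = H^1(\Gamma, \mathfrak{g}^{Ad_h})$. Applying the formula of Lemma \ref{dimirred} to this submodule (whose $\Gamma$-invariants still vanish) gives codimension $(l-1) p(p-1)$ in the free case and $(2g-2) p(p-1)$ in the surface case, both $\geq 2$. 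In the cyclic case, Lemma \ref{cyclicaction} applied to the generator of $Z_G(\rho)$ directly produces an algebraic singularity at $0$. In the case $Z_G(\rho) \cong (\mathbb{Z}/p)^2$, no non-trivial element acts as a pseudo-reflection on $H^1$, so the subgroup of $Z_G(\rho)$ generated by pseudo-reflections is trivial, and Chevalley--Shephard--Todd forces $H^1 // Z_G(\rho)$ to be singular at $0$. The main obstacle is the non-cyclic case, which falls outside Lemma \ref{cyclicaction} and requires Chevalley--Shephard--Todd (or an explicit construction of too many invariants in $\mathfrak{m}_0 \setminus \mathfrak{m}_0^2$); some care is also needed for $p = 2$, where the Heisenberg-type computation must be adapted and one may prefer to reduce to Proposition 5.8 of \cite{H-P}.
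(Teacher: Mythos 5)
Your argument is correct, and it diverges from the paper's proof in one genuine respect: the treatment of abelian irreducible (bad) representations, where $Z_G(\rho)\cong(\mathbb{Z}/p)^2$. The paper disposes of this case indirectly, observing that any abelian irreducible representation is a limit of non-abelian bad ones and that the set of algebraic singularities is closed; you instead argue directly, showing that every non-trivial element of $(\mathbb{Z}/p)^2$ fails to be a pseudo-reflection on $H^1(\Gamma,\mathfrak{g}_{Ad\circ\rho})$ and invoking Chevalley--Shephard--Todd to conclude that the invariant ring is not polynomial, hence not regular. Both routes are valid. Yours is more uniform (the same fixed-subspace analysis handles the cyclic and the non-cyclic centralizer simultaneously) and more local, at the price of replacing the paper's elementary Lemma \ref{cyclicaction} by the full Chevalley--Shephard--Todd theorem. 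In the cyclic, non-abelian case the two proofs are essentially the same: your observation that every non-trivial lift $h$ of an element of $\langle\overline{D(\xi)}\rangle\times\langle\overline{M_c}\rangle$ is regular semisimple (zero trace plus $h^p$ central forces $p$ distinct eigenvalues, so $\dim\mathfrak{g}^{Ad_h}=p-1$) is equivalent to the paper's decomposition $\mathfrak{sl}_p=\bigoplus_k\mathfrak{d}_k$, since the $\mathfrak{d}_k$ are precisely the $\overline{D(\xi)}$-eigenspaces and only $\mathfrak{d}_0$ carries the trivial eigenvalue.

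Two expository points to tighten. First, you apply ``the formula of Lemma \ref{dimirred}'' to the submodule $\mathfrak{g}^{Ad_h}$, but that lemma is stated only for $\mathfrak{g}$ itself; the extension to a $\Gamma$-submodule $W$ with $W^\Gamma=0$ (and $H^2(\Gamma,W)=0$ in the surface-group case, which holds here because $\mathfrak{g}^{Ad_h}$ is a Cartan subalgebra and hence self-dual under the Killing form) is routine via the Euler-characteristic formula, but should be said. Second, Chevalley--Shephard--Todd requires faithfulness of the $Z_G(\rho)$-action on $H^1$; this does follow from your codimension bound (no non-trivial element acts trivially), and is worth making explicit since it is what licenses the appeal to the theorem.
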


\begin{proof} Let $\rho$ be a good representation, since $\Gamma$ is a free group or a closed surface group, $\rho$ is scheme smooth. Corollary 50 in \cite{Sik} implies that $[\rho]$ is scheme smooth   as well since $\rho$ has trivial centralizer.

\bigskip

Conversely, assume $\rho$ is a bad representation. Up to conjugation (Theorem \ref{classifcentr}) we may assume that $\rho$ is a representation into $\overline{D}\rtimes\langle\overline{M_c}\rangle$. Let $K$ be the kernel of $q\circ \rho$. For the moment, we assume that $\rho$ is not abelian and therefore its centralizer is $\langle\overline{D(\xi)}\rangle$. In order to prove that $[\rho]$ is an algebraic singularity we prove that $\dim\left( T_{[\rho]}\mathfrak{X}(\Gamma,PSL_p)\right)> \dim\mathfrak{X}(\Gamma,PSL_p)=\dim\chi(\Gamma,PSL(p,\mathbb{C}))$.

We denote $\mathfrak{sl}_p(\mathbb{C})$ the Lie algebra of $PSL(p,\mathbb{C})$. Following Equation \ref{dimesptgt}, we need to compute $H^1(\Gamma,\mathfrak{sl}_p(\mathbb{C})_{Ad\circ\rho})$. For $0\leq i,j\leq p-1$,   we denote $E_{i,j}$ the matrix with a $1$ at the $(i,j)$-th entry and $0$ everywhere else. Let $\mathfrak{d}_0$ be the Lie algebra of trace free diagonal matrices in $\mathfrak{sl}_p(\mathbb{C})$ and for $k=1,\dots,p-1$, define 
$$\mathfrak{d}_k:=\bigoplus_{i=0}^{p-1}\mathbb{C}E_{i,i+k}\text{.} $$
Since for $0\leq k\leq p-1$, $\mathfrak{d}_k$ is stable by the action of $\overline{D}$ and $\overline{M_c}$, the   decomposition of $\mathfrak{sl}_p(\mathbb{C})=\mathfrak{d}_0\oplus\dots\oplus \mathfrak{d}_{p-1}$ as $\mathbb{C}$-vector space is also a decomposition as $\Gamma$-module.  
\begin{equation}
H^1(\Gamma,\mathfrak{sl}_p(\mathbb{C})_{Ad\circ\rho})= H^1(\Gamma,\mathfrak{d}_{0,Ad\circ\rho})\oplus \bigoplus_{k=1}^{p-1}H^1(\Gamma,\mathfrak{d}_{k,Ad\circ\rho})\text{.}
\label{decompH1}
\end{equation}
For $1\leq k\leq p-1$, $0\leq i\leq p-1$,  $Ad(\overline{D(\xi)})\cdot E_{i,i+k}=\xi^{k}E_{i,i+k}$. Therefore, the action of $\overline{D(\xi)}$ on $\mathfrak{d}_k$ is the multiplication by $\xi^k$ and so is the induced action of  $\overline{D(\xi)}$ on $H^1(\Gamma,\mathfrak{d}_{k,Ad\circ\rho})$.

\bigskip

We recognize a cyclic quotient. Using Lemma \ref{cyclicaction},  we only need to prove that $\dim(H^1(\Gamma,\mathfrak{sl}_p(\mathbb{C})_{Ad\circ\rho}))-\dim (H^1(\Gamma,\mathfrak{d}_{0,Ad\circ\rho}))>1$  to end up with an algebraic singularity at the origin.

If $\Gamma=\mathbb{F}_r$, then  $\dim \left(Z^1(\mathbb{F}_r,\mathfrak{d}_{k,Ad\circ\rho})\right)=r\dim(\mathfrak{d}_k)=rp$. Since $\rho$ is irreducible, $\mathfrak{sl}_p(\mathbb{C})^{\mathbb{F}_r}=\{0\}$. Therefore $\mathfrak{d}_k^{\mathbb{F}_r}=\{0\}$ and $\dim \left(B_1(\mathbb{F}_r,\mathfrak{d}_{k,Ad\circ\rho})\right)=p-0=p$. Therefore $\dim \left(H^1(\mathbb{F}_r,\mathfrak{d}_{k,Ad\circ\rho})\right)=(r-1)p\geq 2$. 

At the end of Paragraph $6$ in \cite{Wei}, Weil gives an explicit formula for the dimension of the first cohomology groups for Fuchsian groups without parabolic elements acting on $\mathbb{C}$-vector spaces. Applying this to the surface group case,  $\dim\left(H^1(\pi_1(\Sigma_g),\mathfrak{d}_{k,Ad\circ\rho})\right)=(2g-2)p\geq 2$ for $1\leq k\leq p-1$.

In any case, Lemma \ref{cyclicaction} implies that $$\dim \left(T_0 \left(H^1(\Gamma,\mathfrak{sl}_{p}(\mathbb{C})_{Ad\circ\rho})//\langle \overline{D(\xi)}\rangle\right)\right)> \dim \left(H^1(\Gamma,\mathfrak{sl}_p(\mathbb{C})_{Ad\circ\rho})\right)\text{.}$$
 Using Lemma \ref{dimirred} and Sikora's expression for the dimension of the tangent space at $[\rho]$  (Equation \ref{dimesptgt}) which applies because $\rho$ is scheme smooth, $[\rho]$ is an algebraic singularity of the schematic character variety.

 \bigskip
 
In the free group or closed surface group case, any abelian irreducible representation is a limit of non-abelian bad representations. Since each of them is an algebraic singularity of the character variety and the set of algebraic singularities is closed, any conjugacy class of bad representations is an algebraic singularity.   \end{proof}

\begin{remark}\label{reduced}

Let $X$ be a (possibly non-reduced) scheme and $x$ be a closed point of $X$. A point $x$ is \textit{reduced} if its local ring does not contain non-trivial nilpotent elements. A representation (resp. conjugacy class of representations) is scheme smooth if and only if it  is both smooth and reduced. Using  Corollary 55 in \cite{Sik}, one sees that we can forget the "schematic" in the preceding proposition.

\end{remark}

In general, there is no  link between the set of algebraic singularities for the irreducible part of the character variety and the singular locus of the character variety. The next example is based on the idea of Sikora, c.f. \cite{Sik}, Example 42.

\begin{example}\label{algnotorb}
Let $\rho_1$ be the trivial representation of $\mathbb{Z}^2$ into $SL(2,\mathbb{C})$ and $\rho_2$  be the unique irreducible representation of the symmetric group $S_3$ into $SL(2,\mathbb{C})$.  Let $\Gamma$ be the free product of $\mathbb{Z}^2$ and $S_3$ and $\rho:=\rho_1*\rho_2$. Then $\rho$ is an irreducible representation of $\Gamma$ into $SL(2,\mathbb{C})$.

We denote $\Hom(\Gamma,SL(2,\mathbb{C}))_0$ the set of representations in $\Hom(\Gamma,SL(2,\mathbb{C}))$ such that their restriction to $S_3$ is conjugate to $\rho_2$. Since $\rho_2$ is good and locally rigid,   $\Hom(\mathbb{Z}^2,SL(2,\mathbb{C}))\times \{\rho_2\}$ is an \'etale slice for the $PSL(2,\mathbb{C})$-action by conjugation on  $\Hom(\Gamma,SL(2,\mathbb{C}))_0$. Therefore, the tangent space to the character variety at $[\rho]$ is simply the tangent space at $\rho_1$ to $\Hom(\mathbb{Z}^2,SL(2,\mathbb{C}))$.

Example 42 in \cite{Sik} justifies that $\rho_1$ is an algebraic singularity of the representation variety $\Hom(\mathbb{Z}^2,SL(2,\mathbb{C}))$. Therefore, $[\rho]$ is an algebraic singularity of $\chi^i(\Gamma,SL(2,\mathbb{C}))$. However, $\rho$ is a good representation.\end{example}

\begin{example}\label{orbnotalg}
Let $\Gamma=\mathbb{Z}/3\times \mathbb{Z}/3$ and $G=PSL(3,\mathbb{C})$. Define $\rho$ to be an isomorphism between $\Gamma$ and the unique (up to conjugation)  abelian irreducible group of $G$. Being locally rigid, $\rho$ is scheme smooth. Equation \ref{dimesptgt} implies that $\dim\left( T_{[\rho]}\mathfrak{X}(\Gamma,G)\right)=0$ and therefore $[\rho]$ is necessarily scheme smooth as well. However, $\rho$ is an abelian irreducible representation and, in particular, is bad.\end{example}

\begin{remark}  In the proof of Proposition \ref{singvarcar}, we used the hypothesis that $\Gamma$ is a free group or a closed surface group for two different things. The first was to insure that irreducible representations are scheme smooth (this is what fails in  Example \ref{algnotorb}). The second was to insure that    cohomology groups of $\Gamma$ into $\mathfrak{d}_k$ are big enough to use Lemma \ref{cyclicaction} (this is what trivially fails in Example \ref{orbnotalg} because $\Gamma$ is finite). The modular group in $PSL(2,\mathbb{C})$ leads to a non-locally rigid example where the second condition fails, see Proposition \ref{PSL(2,Z)}. \end{remark}

\begin{proposition}\label{PSL(2,Z)}

$\chi^i(PSL(2,\mathbb{Z}),PSL(p,\mathbb{C}))$ is a manifold for any  prime $p>3$.

 The singular locus of $\chi^i(PSL(2,\mathbb{Z}),PSL(3,\mathbb{C}))$ is a singleton.  Its unique point  is not scheme smooth.

The singular locus of $\chi^i(PSL(2,\mathbb{Z}),PSL(2,\mathbb{C}))$ is a singleton. Its unique point   is scheme smooth. 

\end{proposition}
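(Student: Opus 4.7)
The proof proceeds in three steps. Note that $PSL(2,\mathbb{Z})=\mathbb{Z}/2*\mathbb{Z}/3$ has abelianization $\mathbb{Z}/6$, so its $p$-rank equals $1$ for $p\in\{2,3\}$ and $0$ for every prime $p\geq 5$. By Theorem \ref{combi}, the singular locus is empty whenever $p\geq 5$; combined with the fact that $\Hom(\Gamma,PSL(p,\mathbb{C}))=\{A:A^2=I\}\times\{B:B^3=I\}$ is scheme smooth (each factor being a union of smooth semisimple conjugacy classes in $PSL(p,\mathbb{C})$) and that the proper $PSL(p,\mathbb{C})$-action on $\Hom^i$ has trivial stabilizers whenever every irreducible representation is good, this yields the manifold assertion for $p\geq 5$.

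For $p\in\{2,3\}$ there is a unique normal subgroup $K$ of index $p$, and a Reidemeister--Schreier computation gives $K=\mathbb{Z}/3*\mathbb{Z}/3=\langle t, sts\rangle$ when $p=2$ and $K=\mathbb{Z}/2*\mathbb{Z}/2*\mathbb{Z}/2=\langle s, tst^{-1}, t^2st^{-2}\rangle$ when $p=3$. Applying Proposition \ref{Transgr} with $\gamma_0\in\{s,t\}$ a preimage of $\overline{M_c}$, the condition $f(\gamma_0^p)=I$ is automatic (since $\gamma_0^p=1$ already in $\Gamma$), and the equivariance $f(\gamma_0\gamma\gamma_0^{-1})=\overline{M_c}\cdot f(\gamma)$ reduces $f$ to a single value in $\overline{D}$ of prescribed finite order, yielding $H^1(\Gamma,\overline{D}_{\overline{\rho}_K})\cong\mathbb{Z}/3$ for $p=2$ (elements of order dividing $3$ in $\overline{D}\subset PSL(2,\mathbb{C})$) and $\cong(\mathbb{Z}/2)^2$ for $p=3$ (elements of order dividing $2$ in $\overline{D}\subset PSL(3,\mathbb{C})$). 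In each case the induced $\langle\overline{M_c}\rangle$-action has exactly two orbits, the trivial one corresponding by Remark \ref{Hirred} to a non-irreducible representation; hence by Proposition \ref{barHrho} and Theorem \ref{homeobarHrho} the singular locus is a singleton $\{[\rho]\}$ with $\rho(\Gamma)\cong S_3$ in $PSL(2,\mathbb{C})$ or $\cong A_4$ in $PSL(3,\mathbb{C})$. Since the $p$-rank of $\Gamma$ is $1$, no surjection $\Gamma\to(\mathbb{Z}/p)^2$ exists, so $\rho$ is not abelian irreducible and Theorem \ref{classifcentr} gives $Z_G(\rho)=\langle\overline{D(\xi)}\rangle\cong\mathbb{Z}/p$.

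To settle scheme smoothness I adapt the strategy of Proposition \ref{singvarcar}. The decomposition $\mathfrak{sl}_p(\mathbb{C})=\bigoplus_{k=0}^{p-1}\mathfrak{d}_k$ is $\Gamma$-stable and $Ad(\overline{D(\xi)})$ acts on $\mathfrak{d}_k$ by the scalar $\xi^k$. Choose the representative $\rho$ with $\rho(t)=\overline{M_c}$ and $\rho(s)$ a non-trivial diagonal involution. Since $H^n(\mathbb{Z}/m,V)=0$ for $n>0$ and $V$ a $\mathbb{C}$-module, Mayer--Vietoris for the free product $\Gamma=\mathbb{Z}/2*\mathbb{Z}/3$ yields $H^1(\Gamma,\mathfrak{d}_k)=\mathfrak{d}_k/(\mathfrak{d}_k^{\rho(s)}+\mathfrak{d}_k^{\rho(t)})$, and direct inspection of eigenspaces gives $H^1(\Gamma,\mathfrak{d}_0)=0$ and $\dim H^1(\Gamma,\mathfrak{d}_k)=1$ for $1\leq k\leq p-1$. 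Since $\rho$ is scheme smooth (by the first paragraph), Equation \ref{dimesptgt} yields $\dim T_{[\rho]}\mathfrak{X}(\Gamma,PSL(p,\mathbb{C}))=\dim T_0(H^1(\Gamma,\mathfrak{sl}_p(\mathbb{C}))//Z_G(\rho))$. For $p=2$ the $\mathbb{Z}/2$-action on $H^1\cong\mathbb{C}$ has fixed subspace of codimension $1$, so Lemma \ref{cyclicaction} gives smoothness of the quotient at $0$ and $[\rho]$ is scheme smooth. For $p=3$ the $\mathbb{Z}/3$-action on $H^1\cong\mathbb{C}^2$ has trivial fixed subspace, of codimension $2>1$, so Lemma \ref{cyclicaction} produces an algebraic singularity and $[\rho]$ is not scheme smooth.

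The main technical obstacle is the explicit identification of the fixed subspaces $\mathfrak{d}_k^{\rho(s)}$ and $\mathfrak{d}_k^{\rho(t)}$: with the chosen representative, $Ad(\overline{M_c})$ cyclically permutes the basis $(E_{i,i+k})_{0\leq i\leq p-1}$ of $\mathfrak{d}_k$ (giving a $1$-dimensional fixed subspace), while the diagonal involution $\rho(s)$ splits $\mathfrak{d}_k$ into $\pm 1$-eigenspaces with a $1$-dimensional $+1$-eigenspace meeting the previous one only at $0$; verifying that these two fixed subspaces together span exactly codimension $1$ in $\mathfrak{d}_k$ for every $k\geq 1$ is what makes the scheme smoothness dichotomy reduce to whether $p-1$ equals or exceeds $1$.
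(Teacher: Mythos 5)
Your proof follows essentially the same line as the paper's: the same product decomposition $\Homb(PSL(2,\mathbb{Z}),PSL_p)=\Homb(\mathbb{Z}/2,PSL_p)\times\Homb(\mathbb{Z}/3,PSL_p)$ to get scheme smoothness of the representation variety, the same observation that there is no (resp.\ a unique) normal subgroup of index $p$ for $p>3$ (resp.\ $p\in\{2,3\}$), the same $\Gamma$-stable decomposition $\mathfrak{sl}_p=\bigoplus_k\mathfrak{d}_k$ with $\dim H^1(\Gamma,\mathfrak{d}_k)=1$ for $1\le k\le p-1$, and the same application of Equation \ref{dimesptgt} together with Lemma \ref{cyclicaction}. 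The one place you add content is that you derive the singleton conclusion systematically from Propositions \ref{Transgr}, \ref{barHrho} and Theorem \ref{homeobarHrho} (computing $H^1(\Gamma,\overline{D}_{\overline\rho_K})\cong\mathbb{Z}/3$ resp.\ $(\mathbb{Z}/2)^2$ and quotienting by $\langle\overline{M_c}\rangle$), whereas the paper simply exhibits the unique bad class $\beta_p$ directly; your identification of the image as $S_3$ resp.\ $A_4$ is a nice addition. One small slip to fix: you write ``choose the representative $\rho$ with $\rho(t)=\overline{M_c}$ and $\rho(s)$ a non-trivial diagonal involution,'' but since $\overline{M_c}$ has order $p$ and $t$ has order $3$, this is only consistent when $p=3$; for $p=2$ the roles are reversed ($\rho(s)=\overline{M_c}$, $\rho(t)$ diagonal of order $3$), so the last paragraph's description of the fixed subspaces ($\rho(s)$ an involution splitting $\mathfrak{d}_k$ into $\pm1$-eigenspaces) is specific to $p=3$. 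The codimension-one conclusion for each $\mathfrak{d}_k$ with $k\ge 1$ still holds in the $p=2$ case, but there the diagonal element of order $3$ has \emph{trivial} fixed subspace in $\mathfrak{d}_1$, not a $1$-dimensional one, so the phrasing should be adjusted.
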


\begin{proof}
 First,  $PSL(2,\mathbb{Z})$ is isomorphic to $\mathbb{Z}/2*\mathbb{Z}/3=\langle a, b\mid a^2=1=b^3\rangle$. Therefore 
$$\Homb(PSL(2,\mathbb{Z}),PSL_p)=\Homb(\mathbb{Z}/2,PSL_p)\times \Homb(\mathbb{Z}/3,PSL_p)\text{.}$$
Since $\Homb(\mathbb{Z}/2,PSL_p)$ and $ \Homb(\mathbb{Z}/3,PSL_p)$ are both smooth (because any closed point is locally rigid whence scheme smooth), $\Homb(PSL(2,\mathbb{Z}),PSL_p)$ is also smooth and therefore the open subvariety $\Hom^i(PSL(2,\mathbb{Z}),PSL(p,\mathbb{C}))$ is also  smooth. As a result,   $\chi^i(PSL(2,\mathbb{Z}),PSL(p,\mathbb{C}))$ is an orbifold and conjugacy classes of good representations are necessarily scheme smooth. 

\bigskip

When   $p>3$, there is no normal subgroup of index $p$ in $PSL(2,\mathbb{Z})$. Equation \ref{decompbadlocus2}  directly implies that the singular locus is empty, whence the orbifold $\chi^i(PSL(2,\mathbb{Z}),PSL(p,\mathbb{C}))$ is actually a manifold.

 $p=3$. There is only one normal subgroup of index $3$ in $PSL(2,\mathbb{Z})$ (it is normally generated by $a$). Any bad morphism from $PSL(2,\mathbb{Z})$ in $PSL(3,\mathbb{C})$ will then   be conjugate to $\beta_3$ : $a\mapsto \overline{\begin{pmatrix}-1&&\\&-1&\\&&1\end{pmatrix}}$, $b\mapsto \overline{M_c}$ because $a$ needs to be of order $2$. By definition, there is a single point in $\chi^i_{Sing}(PSL(2,\mathbb{Z}),PSL(3,\mathbb{C}))$.

$p=2$. Likewise, there is  only one  conjugacy class of  bad representations from $PSL(2,\mathbb{Z})$ in $PSL(2,\mathbb{C})$ which is given by  $\beta_2$ : $a\mapsto \overline{M_c}$, $b\mapsto \overline{\begin{pmatrix}e^{\frac{2\sqrt{-1}\pi}{3}}&\\&e^{-\frac{2\sqrt{-1}\pi}{3}}\end{pmatrix}}$. There is a single point in $\chi^i_{Sing}(PSL(2,\mathbb{Z}),PSL(2,\mathbb{C}))$.

\bigskip

To study, the singularity  $[\beta_p]$ in the character variety one does the exact same thing as in Proposition \ref{singvarcar}. Since $\dim (H^1(PSL(2,\mathbb{Z}),\mathfrak{d}_{k,Ad\circ\beta_p}))=1$ for $p=2,3$ and $1\leq k\leq p-1$, we may apply  the criterion of Lemma \ref{cyclicaction} and Sikora's formula (Equation \ref{dimesptgt}). This leads to the wanted result. \end{proof}

 There are more general questions related to  the algebraic singularities of the (schematic) character variety.
\begin{itemize}

\item  Completely reducible representations which are not irreducible are usually algebraic singularities of free groups  character varieties (see for instance Theorem 3.21  in \cite{F-L}). But there are, in some cases, non irreducible representations whose conjugacy class is "accidentally" smooth on the (schematic) character variety. It is highlighted in Remark 3.22 in loc. cit., one could think of  $\chi(\mathbb{Z},SL(n,\mathbb{C}))=\mathbb{C}^{n-1}$ or $\chi(\mathbb{F}_2,SL(2,\mathbb{C}))=\mathbb{C}^3$ whose all points, even the non-irreducible ones, are smooth.

\item The method in the proof of Proposition \ref{singvarcar} virtually generalizes to any Fuchsian groups (by \textit{Fuchsian group}, we mean discrete subgroup $\Gamma$ of $PSL(2,\mathbb{R})$ such that $\mathbb{H}^2/\Gamma$ has finite volume, see \cite{Wei}). Indeed, for such groups, irreducible representations are scheme smooth in $PSL(p,\mathbb{C})$, which allows us to use Sikora's formula (Equation \ref{dimesptgt}). It would certainly be interesting to look more closely to non-Fuchsian examples (e.g. Example \ref{algnotorb}).  

\item Does Proposition \ref{singvarcar} remain true if  $PSL(p,\mathbb{C})$ is replaced by any complex reductive group $G$ (in the free group case, it is related to Conjecture 3.34 in \cite{F-L}) ?
 \end{itemize}

\begin{appendix}
\section{Group cohomology}\label{grocoh}

The aim of this section is to give a brief overview of the results needed in this paper for the cohomology of groups. For a complete  overview of the theorems and results about group cohomology,  the reader is advised to read \cite{Bro}. For this section, the modules will be noted additively (note that in Section \ref{diagbyfin}, the modules are multiplicative and additive in Section \ref{orbalgsing}). 

\bigskip

Let $G$ be a group and $M$ be an additive $G$-module. We begin with a few definitions for  cocycles of degree $1$ or $2$ (compare with  Chapter III, Paragraph 1, Example 3 in loc. cit.). 

The set of fixed points for the $G$-action of $M$ will be denoted $M^G$. 

 A \textit{$1$-cocycle} from $G$ to $M$ is a map $z:G\to M$ such that for $g,h\in G$, we have $z(gh)=z(g)+g\cdot z(h)$. 
 
 A \textit{$1$-coboundary} from $G$ to $M$ is a map $b:G\to M$ such that there is $m\in M$ verifying for $g\in G$,  $b(g)=m-g\cdot m$. 
 
 A \textit{$2$-cocycle} from $G$ to $M$ is a map $z:G^2\to M$ such that for $g,h,k\in G$, we have $z(g,h)=g\cdot z(h,k)- z(gh,k)+z(g,hk)$.
 
 A \textit{$2$-coboundary} from $G$ to $M$ is a map $b:G^2\to M$ such that there is a map $m:G\to M$ verifying for $g,h\in G$, $b(g,h)=m(g)+g\cdot m(h)-m(gh)$. 

\bigskip

For $i=1,2$. The set of $i$-cocycles (resp. $i$-coboundaries) is denoted $Z^i(G,M)$ (resp. $B^i(G,M)$). It is  straightforward to check that $B^i(G,M)$ is contained in $Z^i(G,M)$. Remark that they are both abelian subgroups of $C^i(G,M)$  (the set of maps from $G^i$ to $M$, also known as the set of $i$-cochains in this context). The \textit{$i$-th cohomology group} is the quotient  $H^i(G,M):=Z^i(G,M)/B^i(G,M)$.  If $z$ is a $i$-cocycle, $[z]$ denotes its image in    $H^i(G,M)$.

Remark that $H^i(G,M)$ is actually defined for all $i\in \mathbb{N}$ but we will only use the first and the second cohomology groups.

\bigskip

These low-degree cohomology groups are useful in algebra (read Chapter IV in loc. cit. for applications to group extensions), especially using cochains. In Section \ref{diagbyfin} and Section \ref{orbalgsing}, this is the link between the first cohomology group and semidirect product which is used  (see Remark \ref{similar}). The first example of computation, which directly follows from the definition, is an easy exercise which is left to the reader. 
\begin{lemma}\label{H1triv}

Let $G$ be a group and $M$ be a trivial $G$-module (that is $G$ acts trivially on $M$) then $H^1(G,M)=\Hom(G,M)$. 

\end{lemma}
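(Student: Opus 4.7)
The plan is to compute both $Z^1(G,M)$ and $B^1(G,M)$ directly from the definitions under the hypothesis that the $G$-action on $M$ is trivial, and then take the quotient. Since $g \cdot m = m$ for every $g \in G$ and $m \in M$, both formulas simplify dramatically.

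First I would unpack the cocycle condition. A map $z: G \to M$ is a $1$-cocycle precisely when $z(gh) = z(g) + g \cdot z(h)$ for all $g,h \in G$. With the trivial action this collapses to $z(gh) = z(g) + z(h)$, which is exactly the condition that $z$ be a group homomorphism from $G$ to the underlying abelian group of $M$. Conversely, any homomorphism $z: G \to M$ obviously satisfies the cocycle identity. Hence $Z^1(G,M) = \Hom(G,M)$ as abelian groups.

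Next I would identify $B^1(G,M)$. A $1$-coboundary has the form $b(g) = m - g\cdot m$ for some fixed $m \in M$; under the trivial action this equals $m - m = 0$ for every $g$. Therefore $B^1(G,M) = 0$, and quotienting gives
\[
H^1(G,M) = Z^1(G,M)/B^1(G,M) = \Hom(G,M)/0 = \Hom(G,M),
\]
as claimed. There is no real obstacle here — the statement is immediate once the definitions of $1$-cocycle and $1$-coboundary are written out, which is why the excerpt leaves it as an exercise.
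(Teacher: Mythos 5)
Your proof is correct and is exactly the direct computation the paper intends: the cocycle condition reduces to additivity under the trivial action, and the coboundaries vanish. The paper leaves this as an exercise, so your argument is the expected one.
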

In a list of four papers, Fox introduced free differential calculus (the second paper of this list is \cite{Fox}). It can be used to  compute low-degree cohomology of groups (see \cite{Gol} for instance) especially when the module is  a vector space. In general, computing  cohomology groups proves to be  difficult using its definition with cochains and one need to use a more abstract approach.

\bigskip

Let $G$ be  a cyclic group with generator $g$ and order $n$. We define two maps related to the $G$-module $M$ :
$$\Norme_M:\left|\begin{array}{lcc}M&\longrightarrow& M^G\\x&\longmapsto & \sum_{i=0}^{n-1}g^i\cdot x\end{array}\right. \text{ and } \Trace_M:\left|\begin{array}{lcc}M&\longrightarrow& M\\x&\longmapsto &x-g\cdot x\end{array}\right.$$
Then, we have in Chapter III, Paragraph 1, Example 2 in \cite{Bro} :
\begin{lemma}\label{H1cycl}

Let $G$ be a cyclic group with generator $g$ and order $n$ and $M$ be a $G$-module, then $H^1(G,M)$ is isomorphic to $\Ker(\Norme_M)/\im(\Trace_M)$. 

\end{lemma}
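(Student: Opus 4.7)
The plan is to exhibit an explicit evaluation isomorphism $Z^1(G,M) \cong \Ker(\Norme_M)$ at the generator $g$, and then show that under this isomorphism the subgroup $B^1(G,M)$ of $1$-coboundaries corresponds exactly to $\im(\Trace_M)$. The statement then follows by passing to the quotient.

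First I would observe that, since $G = \langle g \rangle$ is cyclic, any $1$-cocycle $z : G \to M$ is determined by $z(g)$. Indeed, the cocycle relation $z(hk) = z(h) + h \cdot z(k)$ combined with $z(e) = 0$ (which follows from $z(e) = z(e\cdot e) = 2z(e)$) gives by a straightforward induction
\[
z(g^k) = \sum_{i=0}^{k-1} g^i \cdot z(g), \qquad k \geq 0.
\]
Applying this to $k = n$ and using $z(g^n) = z(e) = 0$ shows that $\Norme_M(z(g)) = 0$, so the evaluation map $\mathrm{ev} : Z^1(G,M) \to \Ker(\Norme_M)$, $z \mapsto z(g)$, is well-defined and injective.

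Next I would show $\mathrm{ev}$ is surjective: given $x \in \Ker(\Norme_M)$, define $z(g^k) := \sum_{i=0}^{k-1} g^i \cdot x$ for $0 \leq k < n$. The only non-trivial point is that $z$ is well-defined modulo $g^n = e$ and satisfies the cocycle identity across the relation; both follow directly from $\sum_{i=0}^{n-1} g^i \cdot x = \Norme_M(x) = 0$, since this makes the formula extend $n$-periodically in $k$ and compatibly with the cocycle relation. This yields the isomorphism $Z^1(G,M) \cong \Ker(\Norme_M)$.

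Finally I would track $B^1(G,M)$ through $\mathrm{ev}$. A $1$-coboundary is of the form $b(h) = m - h \cdot m$ for some fixed $m \in M$, so $\mathrm{ev}(b) = m - g \cdot m = \Trace_M(m)$. Hence $\mathrm{ev}(B^1(G,M)) = \im(\Trace_M)$; one also checks routinely that $\im(\Trace_M) \subseteq \Ker(\Norme_M)$ via the telescoping sum $\Norme_M(m - g\cdot m) = m - g^n \cdot m = 0$. Quotienting therefore gives the desired isomorphism
\[
H^1(G,M) = Z^1(G,M)/B^1(G,M) \;\cong\; \Ker(\Norme_M)/\im(\Trace_M).
\]
No step looks like a serious obstacle; the only mildly delicate point is verifying that the formula $z(g^k) = \sum_{i<k} g^i \cdot x$ gives a well-defined map out of the cyclic group $G$, which is exactly where the hypothesis $x \in \Ker(\Norme_M)$ is used.
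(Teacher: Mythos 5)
Your proof is correct: the evaluation map $z \mapsto z(g)$ does identify $Z^1(G,M)$ with $\Ker(\Norme_M)$, and coboundaries are sent precisely onto $\im(\Trace_M)$, so the quotient identification follows; the one delicate point (well-definedness of the extension $z(g^k)=\sum_{i<k}g^i\cdot x$ across the relation $g^n=e$, and the cocycle identity when exponents wrap past $n$) is exactly where $\Norme_M(x)=0$ enters, as you note. The paper itself does not argue this way: it simply cites Brown (Chapter III, Paragraph 1, Example 2 of \cite{Bro}), where the computation is carried out with the standard $2$-periodic free resolution $\cdots \to \mathbb{Z}[G] \xrightarrow{\;N\;} \mathbb{Z}[G] \xrightarrow{\;g-1\;} \mathbb{Z}[G] \to \mathbb{Z} \to 0$ of the trivial module over the group ring; that approach computes $H^i(G,M)$ for all $i$ at once and exhibits the $2$-periodicity the paper alludes to after the lemma. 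Your hands-on cochain argument buys self-containedness — it uses only the explicit $1$-cocycle and $1$-coboundary definitions already set up in the paper's Appendix \ref{grocoh}, with no homological machinery — at the cost of yielding only the degree-one statement, which is all the lemma requires.
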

Although we will not use more than what is contained in this lemma, it should be acknowledged that  the cohomology  of cyclic groups is $2$-periodic and the lemma mentioned above is a simple consequence of this fact (see Chapter VI, Paragraph 9 in loc. cit. for more details).

\bigskip

The following lemma is particularly interesting in Section  \ref{orbalgsing} where the modules considered are $\mathbb{C}$-vector spaces (see Chapter III, Corollary 10.2 in loc. cit.).

\begin{lemma}\label{cohfin}

Let $G$ be a finite group and $M$ be a $G$-module. If the multiplication by $|G|$ is invertible in $M$ (e.g. if $M$ is a vector space over a field of characteristic $0$) then $H^1(G,M)=H^2(G,M)=0$.

\end{lemma}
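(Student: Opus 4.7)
The plan is to prove both vanishings by the standard averaging trick, which is available precisely because $\frac{1}{|G|}$ makes sense as an endomorphism of $M$. The core idea is that one can turn any cocycle into a coboundary by averaging it over $G$.

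For $H^1(G,M)=0$, I would start from an arbitrary $1$-cocycle $z:G\to M$ and define
\[ m := \frac{1}{|G|}\sum_{h\in G} z(h) \in M. \]
Using the cocycle identity $z(gh)=z(g)+g\cdot z(h)$ in the form $g\cdot z(h)=z(gh)-z(g)$ and then re-indexing the sum $\sum_h z(gh)$ (which equals $\sum_h z(h)$ since $h\mapsto gh$ permutes $G$), I expect to land on the relation $g\cdot m = m - z(g)$. This exhibits $z$ as the $1$-coboundary associated with $m$ (up to the sign convention of Section~\ref{grocoh}), so $Z^1(G,M)=B^1(G,M)$.

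For $H^2(G,M)=0$, I would apply the same averaging idea one degree higher. Given a $2$-cocycle $z:G^2\to M$, define a $1$-cochain
\[ m(g) := \frac{1}{|G|}\sum_{k\in G} z(g,k). \]
Summing the cocycle identity $z(g,h) = g\cdot z(h,k) - z(gh,k) + z(g,hk)$ over $k\in G$ and dividing by $|G|$, the contributions $\frac{1}{|G|}\sum_k z(g,hk)$ and $\frac{1}{|G|}\sum_k z(g,k)$ cancel against each other because $k\mapsto hk$ is a bijection of $G$; the surviving terms assemble into $g\cdot m(h)-m(gh)$ on one side and $z(g,h)$ on the other. This shows $z(g,h)=m(g)+g\cdot m(h)-m(gh)$, i.e.\ $z\in B^2(G,M)$, hence $H^2(G,M)=0$.

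The only delicate point is bookkeeping the signs and index substitutions so that the averaging produces exactly the coboundary relation written in the appendix; there is no conceptual obstacle, since invertibility of $|G|$ is being used in exactly the one place where it is needed, namely to exhibit the identity on $M$ as the norm divided by $|G|$.
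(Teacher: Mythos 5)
Your averaging argument is correct, and the calculations land on exactly the coboundary formulas recorded in the appendix (namely $z(g)=m-g\cdot m$ in degree~$1$ and $z(g,h)=m(g)+g\cdot m(h)-m(gh)$ in degree~$2$). The paper itself gives no proof; it only cites Brown, Chapter III, Corollary 10.2, where the standard route goes through the transfer (corestriction) map: for the trivial subgroup $1\le G$, the composite $H^n(G,M)\to H^n(1,M)\to H^n(G,M)$ is multiplication by $[G:1]=|G|$, and since $H^n(1,M)=0$ for $n\ge 1$ this shows $|G|$ annihilates $H^n(G,M)$ in every positive degree, hence $H^n(G,M)=0$ once $|G|$ acts invertibly on $M$. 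That argument is uniform in the degree; yours unwinds it by hand in the two degrees actually used in the paper, which is more elementary and self-contained. One phrasing slip worth flagging: in your $H^2$ computation the quantity $\frac{1}{|G|}\sum_{k} z(g,hk)$ does not ``cancel'' against $\frac{1}{|G|}\sum_k z(g,k)$ --- there is no second such term in the averaged cocycle identity to cancel against. Rather, the substitution $k\mapsto hk$ identifies $\frac{1}{|G|}\sum_{k} z(g,hk)$ with $m(g)$, and this is precisely the $m(g)$ summand in the coboundary formula $z(g,h)=m(g)+g\cdot m(h)-m(gh)$. The conclusion is unaffected.
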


The last result we will need is a way to relate the cohomology of a group with the cohomology of its normal subgroups. Before stating the proposition, we need to make a few comments. Start with $G$ a group and a normal subgroup  $N$ of $G$. Let $Q$ be the quotient group $G/N$ with its natural projection $p:G\rightarrow Q$. If $M$ is a $G$-module then  $M$ is also a $N$-module (by simply restricting the action) and  $M^N$ is a $Q$-module (since $M^N$ is both a $G$-module and  a trivial $N$-module).

For $q\in Q$, we arbitrarily choose $x_q\in G$ such that $x_q$ mod $N= q$.  If $z$ belongs to $Z^1(N,M)$, then one can see that the map $z_q: n\mapsto x_q\cdot z(x_q^{-1}nx_q)$ is also an element of $Z^1(N,M)$. Furthermore, if $z$ is in $B^1(N,M)$, then $z_q$ is also in  $B^1(N,M)$. Therefore, we have a map from $H^1(N,M)$ to itself sending $[z]$ to $[z_q]$. One can check that this defines an action of $Q$ on $H^1(N,M)$ which does not depend on the lifts $x_q$ chosen.

\bigskip

If $z$ is a $1$-cocycle from $G$ to $M$, then its restriction $z'$ to $N$ is also a $1$-cocycle. Furthermore, $[z']$ can be shown to be a fixed points for the $Q$-action on $H^1(N,M)$. We denote $\Res[z]:=[z']$. This defines a map  $\Res: H^1(G,M)\to H^1(N,M)^Q$ called the \textit{restriction map}.

If $w$ is  a $1$-cocycle from $Q$ to $M^N$, then  its push-forward by $p$,  $z:=w\circ p: G\to M$   belongs to $Z^1(G,M)$ and we denote $\Inf[w]:=[z]$. This defines a map $\Inf: H^1(Q,M^N)\to H^1(G,M)$ called the \textit{inflation map}.

\bigskip

The result we are interested in is the decomposition of the cohomology of $G$ in $M$  using the cohomology of $Q$ and $N$. The decomposition is called the Hochschild-Serre Spectral Sequence see Chapter VII, Theorem 6.3 in \cite{Bro} (remark that the spectral sequence is given for the homology but also works for the cohomology). For us, the most useful part   of this spectral sequence are its first terms which give the Inflation-Restriction sequence, see Chapter VII, Corollary 6.4 in loc. cit. :
\begin{proposition}\label{InfRes}
Let $G$ be a group and $M$ be a $G$-module.  Assume we have the following exact sequence $1\rightarrow N\rightarrow G\rightarrow Q\rightarrow 1$. Then there is a map $T: H^1(N,M)^Q\to H^2(Q,M^N)$ called the \textit{Transgression map} such that the following exact sequence is exact :

$$\xymatrix{1\ar[r]&H^1(Q,M^N)\ar[r]^{\Inf}&H^1(G,M)\ar[r]^{\Res}&H^1(N,M)^Q\ar[r]^{T}&H^2(Q,M^N)}$$
\end{proposition}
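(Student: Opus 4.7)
The plan is to give a self-contained cochain-level proof, since only cohomology in degrees $1$ and $2$ is involved; morally this is just the five-term exact sequence from the Hochschild-Serre spectral sequence $E_2^{p,q}=H^p(Q,H^q(N,M))\Rightarrow H^{p+q}(G,M)$, but the low-degree case can be verified directly from the definitions recalled in the appendix. First I would check that $\Inf$ and $\Res$ are well-defined. That the inflation of a cocycle on $Q$ is a cocycle on $G$, and that coboundaries go to coboundaries, is immediate. That the restriction $z|_N$ of a $1$-cocycle $z:G\to M$ has class in the $Q$-fixed part requires a short computation: comparing the two expressions for $z(x_q n)=z(x_q (x_q^{-1} n x_q))$ via the cocycle identity and using normality of $N$, one shows that $x_q\cdot(z|_N)(x_q^{-1}\cdot x_q)-z|_N$ differs from the zero cocycle by a coboundary depending only on $z(x_q)$.

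Next I would handle exactness at the first two nodes. For injectivity of $\Inf$: if $w\circ p$ agrees with the coboundary $g\mapsto m-g\cdot m$ on $G$, then evaluating at $n\in N$ gives $0=m-n\cdot m$, so $m\in M^N$ and $w$ was already a coboundary on $Q$. The inclusion $\mathrm{Im}(\Inf)\subseteq\mathrm{Ker}(\Res)$ is immediate since inflated cocycles vanish on $N$. For the reverse inclusion, given $z\in Z^1(G,M)$ with $z|_N$ a coboundary, I would subtract a suitable $G$-coboundary to reduce to the case $z|_N=0$; then the cocycle identity $z(gn)=z(g)+g\cdot z(n)=z(g)$ shows $z$ factors through a well-defined map $\bar z : Q\to M$, while $z(ng)=z(n)+n\cdot z(g)=n\cdot z(g)$ combined with $z|_N=0$ forces the values of $\bar z$ to lie in $M^N$. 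Checking that $\bar z$ is a cocycle on $Q$ is then a routine verification.

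The main obstacle is the construction of the transgression $T$ and the exactness at $H^1(N,M)^Q$. Given $[z]\in H^1(N,M)^Q$, I would choose a set-theoretic extension $\tilde z:G\to M$ of $z$ with $\tilde z(1)=0$. The defect $t(g_1,g_2):=g_1\cdot\tilde z(g_2)-\tilde z(g_1g_2)+\tilde z(g_1)$ is automatically a $2$-cocycle on $G$ with values in $M$ and vanishes on $N\times N$ because $z$ is a cocycle on $N$. Using the $Q$-invariance of $[z]$, I would adjust $\tilde z$ by a suitable $1$-cochain so that $t$ vanishes whenever either argument lies in $N$; a diagram chase then shows that $t$ factors as the inflation of a $2$-cocycle on $Q$ with values in $M^N$, producing $T[z]\in H^2(Q,M^N)$. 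The delicate bookkeeping lies in proving that $T$ is independent of the chosen extension and normalization, which comes down to the fact that two such extensions differ by a $1$-cochain on $G$ whose restriction to $N$ is a coboundary, so the corresponding $2$-cocycles on $Q$ differ by a coboundary. Exactness at $H^1(N,M)^Q$ then follows: if $[z]=\Res[z']$, we may take $\tilde z=z'$, so $t\equiv 0$; conversely, if $T[z]=0$ then $t=\partial\sigma$ for some $\sigma:Q\to M^N$, and $\tilde z-\sigma\circ p$ becomes a genuine $1$-cocycle on $G$ restricting to $z$, so $[z]\in\mathrm{Im}(\Res)$.
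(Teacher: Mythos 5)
Your proposal is correct in substance, but it takes a different route from the paper: the paper does not prove Proposition~\ref{InfRes} at all, instead quoting it directly from Brown's \textit{Cohomology of Groups} (Chapter~VII, Corollary~6.4), where it appears as the low-degree exact sequence of the Hochschild--Serre spectral sequence $E_2^{p,q}=H^p(Q,H^q(N,M))$. You instead give a self-contained cochain-level verification. The spectral-sequence citation buys brevity and places the result in its natural general context (including the analogous sequence in higher degrees, should it be needed); your direct argument buys concreteness. Notably, the explicit description of the transgression as the defect $2$-cocycle of a suitably normalized set-theoretic extension $\tilde z$ of $z$ is exactly what the paper needs later anyway: Proposition~\ref{Transgr} relies on the explicit formula for $T$, and for this the paper makes a separate citation to \cite{D-H-W}, Paragraph~10.2. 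Your construction recovers that formula as a byproduct, so in a sense your route is better adapted to the paper's actual use of the proposition.

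Two minor cautions on the details. First, in the sketch of why $\Res$ lands in $H^1(N,M)^Q$, the key identity is $(q\cdot z|_N)(n)-z|_N(n)=n\cdot z(x_q)-z(x_q)$, and it is worth writing that computation out rather than gesturing at it, since it also shows the action is independent of the choice of coset representative. Second, the step you call the main obstacle --- adjusting $\tilde z$ by a $1$-cochain so that $t=\partial\tilde z$ vanishes whenever one argument is in $N$, and then showing $t$ factors through $Q\times Q$ with values in $M^N$ --- is correct but is where essentially all the work is. The adjustment is precisely where the $Q$-invariance of $[z]$ enters: writing $g=n_g\,s(p(g))$ for a section $s$ with $s(1)=1$ and putting $\tilde z(g)=z(n_g)+n_g\cdot\phi(p(g))$, the right $N$-compatibility forces $\phi(q)$ to be chosen as the bounding element witnessing $(q\cdot z)-z\in B^1(N,M)$. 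That $t$ is then constant on cosets and $N$-invariant follows from the $2$-cocycle identity for $\partial\tilde z$ together with the vanishing on $N\times G$ and $G\times N$. Your exactness argument at $H^1(N,M)^Q$ is correct once this is in place. So the proposal is a valid proof, more elementary than the one the paper invokes, but it should be regarded as a proof sketch that still requires the normalization and factorization bookkeeping to be written out in full.
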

\begin{remark}\label{DHWexplicit}
It is possible to explicitly write down the Transgression map $T$ defined in the proposition, see Paragraph  10.2 in \cite{D-H-W}.  \end{remark}

\end{appendix}

\end{document}